\newtheorem{theo}{Theorem}
\newtheorem{lem}{Lemma}
\newtheorem{prop}{Proposition}
\newtheorem{coro}{Corollary}
\newtheorem{rem}{Remark}
\newcommand{\R}{\mathbb{R}}
\newcommand{\E}{\mathbb{E}}
\newcommand\blfootnote[1]{%
  \begingroup
  \renewcommand\thefootnote{}\footnote{#1}%
  \addtocounter{footnote}{-1}%
  \endgroup
}
\title{\textcolor{white}{blanc}\\
\vspace{-3cm}Variations on Salem--Zygmund results for\\  random trigonometric polynomials \\
{\normalsize Application to almost sure nodal asymptotics}}
\author{J\"urgen Angst and Guillaume Poly}
\begin{document}
\maketitle
\blfootnote{Univ Rennes, CNRS, IRMAR - UMR 6625, F-35000 Rennes, France.}
\blfootnote{This work was supported by the ANR grant UNIRANDOM, ANR-17-CE40-0008.} 

\par \vspace{-1cm}
\begin{abstract}
On some probability space $(\Omega, \mathcal F, \mathbb P)$ we consider two independent sequences $(a_k)_{k \geq 1}$ and $(b_k)_{k \geq 1}$  of i.i.d. random variables that are centered with unit variance and which admit a moment strictly higher than two. Let us define the associated random trigonometric polynomial  
\[
f_n(t) :=\frac{1}{\sqrt{n}} \sum_{k=1}^n a_k \cos(kt)+b_k \sin(kt), \quad t \in \mathbb R.
\]
In their seminal work, for Rademacher coefficients, Salem and Zygmund showed that $\mathbb P$ almost surely:
\begin{equation*}
\forall t\in\R,\quad \frac{1}{2\pi}\int_{0}^{2\pi} \exp\left(i t f_n(x)\right) dx \xrightarrow[n\to\infty]~e^{-\frac{t^2}{2}}.
\end{equation*}
In other words, if $X$ denotes an independent random variable uniformly distributed over $[0,2\pi]$,  $\mathbb{P}$ almost surely, under the law of $X$, $f_n(X)$ converges in distribution to a standard Gaussian variable. In this paper, we revisit the above Salem--Zygmund result from different perspectives. Namely,
\begin{itemize} 
 \setlength{\itemsep}{0pt}
\item[i)] we establish a possibly sharp convergence rate for some adequate metric via the Stein's method, 
\item[ii)] we prove a functional counterpart of Salem--Zygmund CLT,
\item[iii)] we extend it to more general distributions for $X$,
\item[iv)] we also prove that the convergence actually holds in total variation. 
\end{itemize}
As an application, in the case where the random coefficients have a symmetric distribution and admit a moment of order $4$, we show that, $\mathbb{P}$ almost surely, for any interval $[a,b] \subset [0, 2\pi]$
\[
\frac{\mathcal N(f_n,[a,b])}{n}  \xrightarrow[n \to +\infty]{} \frac{(b-a)}{\pi \sqrt{3}},
\]
where $\mathcal N(f_n,[a,b])$ denotes the number of real zeros of $f_n$ in the interval $[a,b]$. To the best of our knowledge, such an almost sure result is new in the framework of random trigonometric polynomials, even in the  case of Gaussian coefficients.
\end{abstract}
\par \vspace{1cm}
\textcolor{white}{blanc}

{\small
\tableofcontents
}

\section{Introduction and statement of the results}
\subsection{Introduction}
\if{
The study of random trigonometric polynomials and random trigonometric series has a long story which goes back to the pioneering work of Paley, Salem etc.\cite{kahane,kahane2} and it is still a vibrant area of research.
In this article, we are interested in central limit theorems associated with random trigonometric polynomials of large degree, namely we revisit and reinforce the pioneering work of Salem and Zygmund  \cite{salem1954} on the central limit theorem associated to trigonometric polynomials with random signs. 
Let us precise our framework.
}\fi
Let us start by describing the framework of our study and fix the notations. 
We consider a probability space $(\Omega, \mathcal F, \mathbb P)$ which carries two independent sequences $(a_k)_{k \geq 1}$ and $(b_k)_{k \geq 1}$ of independent and identically distributed random variables. A generic element in $\Omega$ will be denoted by $\omega \in \Omega$. We then consider an independent random variable $X$ with distribution $\mathbb P_X$ in $[0,2\pi]$. This can be achieved by considering the product space 
\[
(\Omega \times [0,2\pi], \mathcal F \times \mathcal B([0,2\pi]), \mathbb P \otimes \mathbb P_X),
\]
with $X$ seen as the identity map from $([0,2\pi],\mathcal B([0,2\pi])$ to itself. 
The expectation with respect to $\mathbb P$ and $\mathbb P_X$ will be denoted by $\mathbb E$ and $\mathbb E_X$ respectively. If $Y$ is an independent copy of $X$, which is also independent of the whole sequence $(a_k,b_k)_{k \geq 1}$, we will denote by $\mathbb E_{X,Y}$ the expectation under $\mathbb P_X \otimes \mathbb P_Y$. To the sequence $(a_k,b_k)_{k \geq 1}$, one can naturally associate a sequence of random trigonometric polynomials setting for all $n \geq 1$
\[
f_n(t) := \frac{1}{\sqrt{n}} \sum_{k=1}^n a_k \cos(kt)+b_k \sin(kt), \quad t \in \mathbb R.
\]
The starting point of the article is the following celebrated result of Salem and Zygmund in \cite{salem1954}, which, slightly adapted to our context, reads as follows:
\begin{theo}[Theorem 3.1.1 of \cite{salem1954}]\label{gene-quali-theo}
Suppose that $(a_k,b_k)_{k \geq 1}$ is a sequence of independent and identically distributed random variables that are centered with unit variance and admit a third moment. Let $X$ be an independent random variable that is uniformly distributed over $[0,2\pi]$. Then, $\mathbb P$ almost surely, under $\mathbb P_X$ we have the following convergence in distribution
\begin{equation}\label{eq.Salem}
f_n(X)\xrightarrow[n\to\infty]{\mathrm{law \, under} \,\mathbb{P}_X}\mathcal{N}(0,1),
\end{equation}
in the sense that $\mathbb{P}$ almost surely
\begin{equation}\label{eq.Salemcharac}
\forall t\in\R,\;\; \mathbb E_X\left[  e^{i t f_n(X)}\right] = \frac{1}{2\pi} \int_0^{2\pi} e^{i t f_n(x)}dx \xrightarrow[n \to +\infty]{} e^{-t^2/2}.
\end{equation}
\end{theo}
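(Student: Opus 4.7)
The plan is to prove, for each fixed $t \in \R$, the almost sure convergence of the random variable $\phi_n(t) := \E_X[e^{it f_n(X)}]$ to $e^{-t^2/2}$, and then to extend the full-measure event so as to cover all $t \in \R$ simultaneously via a separability argument. The almost sure step itself follows the classical recipe: compute the first and second moments of $\phi_n(t)$ under $\mathbb P$, deduce a polynomial decay of the variance, apply Borel--Cantelli along a well-chosen deterministic subsequence, and finally control the oscillations inside each subsequence gap.

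For the first moment, independence of $X$ from the coefficients together with Fubini gives
\[
\E[\phi_n(t)] = \E_X\!\left[\prod_{k=1}^n \psi_a\!\left(\tfrac{t\cos(kX)}{\sqrt{n}}\right)\psi_b\!\left(\tfrac{t\sin(kX)}{\sqrt{n}}\right)\right],
\]
where $\psi_a,\psi_b$ denote the characteristic functions of $a_1$ and $b_1$. A third-order Taylor expansion (legitimate under the third moment assumption) combined with the identity $\cos^2(kX)+\sin^2(kX)=1$ yields $\E[\phi_n(t)] = e^{-t^2/2}+O(|t|^3/\sqrt{n})$. For the second moment, introducing an independent copy $Y$ of $X$ one obtains
\[
\E|\phi_n(t)|^2 = \E_{X,Y}\!\prod_{k=1}^n \E\!\left[e^{it[a_k(\cos kX-\cos kY)+b_k(\sin kX-\sin kY)]/\sqrt{n}}\right].
\]
The Gaussian exponent in the corresponding expansion is $-\frac{t^2}{n}\sum_{k=1}^n(1-\cos(k(X-Y)))$, which by the Dirichlet kernel formula converges to $-t^2$ for Lebesgue-almost every $(X,Y)\in[0,2\pi]^2$; a matching Taylor control then delivers $\E|\phi_n(t)|^2 \to e^{-t^2}$ and hence $\mathrm{Var}(\phi_n(t)) \to 0$.

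Quantifying the above into an estimate of the form $\E|\phi_n(t)-e^{-t^2/2}|^2 = O(n^{-\alpha})$ for some $\alpha>0$, I pick a deterministic subsequence $n_j:=\lceil j^r\rceil$ with $r\alpha>1$, so that Borel--Cantelli yields $\phi_{n_j}(t)\to e^{-t^2/2}$ almost surely. Oscillations within $[n_j,n_{j+1})$ are handled through the Lipschitz bound $|\phi_n(t)-\phi_{n_j}(t)| \le |t|\,\E_X|f_n(X)-f_{n_j}(X)|$ combined with a direct Parseval computation showing that $\E_X|f_n(X)-f_{n_j}(X)|^2$ is almost surely of order $(n_{j+1}-n_j)/n_j$, which tends to zero. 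To guarantee a single exceptional null set valid for every $t\in\R$, I intersect the good events indexed by $t\in\mathbb Q$ and invoke L\'evy's continuity theorem; the required tightness of the random probability measures $\mathbb P_X\circ f_n^{-1}$ holds almost surely since $\E_X[f_n(X)^2] = \frac{1}{2n}\sum_{k=1}^n(a_k^2+b_k^2) \to 1$ by the strong law of large numbers.

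The main obstacle is producing a decay rate in the second moment computation sharp enough to survive the passage from $L^2$ to almost sure convergence while relying only on the third moment of the coefficients. Specifically, the Dirichlet kernel estimate degenerates near the diagonal $X=Y$ in $[0,2\pi]^2$, and one must compensate by splitting the integration domain into a bad region of small Lebesgue measure, on which the trivial bound $|\cdot|\le 1$ is used for the integrand, and a good region on which a quantitative Dirichlet estimate yields a genuine polynomial rate; balancing these two contributions so as to preserve summability along the chosen subsequence is the delicate technical point.
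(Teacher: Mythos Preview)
Your overall architecture---second moment of the random characteristic function via an independent copy $Y$, Borel--Cantelli along a polynomial subsequence, $L^2$ gap-filling, and passage from $\mathbb{Q}$ to $\mathbb{R}$---is exactly the paper's alternative proof. The one place you diverge is the variance estimate, and there you are making your life harder than necessary.

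You identify as the ``main obstacle'' the degeneration of the Dirichlet kernel near the diagonal $X=Y$, and you propose a good-set/bad-set decomposition of $[0,2\pi]^2$. The paper sidesteps this entirely. After the third-order Taylor expansion of the characteristic function, the only random quantity left is
\[
\varepsilon_n(X,Y)=\frac{1}{n}\sum_{k=1}^n\cos\bigl(k(X-Y)\bigr),
\]
and one needs to control $\E_{X,Y}\bigl[\,|e^{-\xi^2\varepsilon_n}-1|\,\bigr]\le \xi^2\,\E_{X,Y}[|\varepsilon_n|]$. Rather than any pointwise Dirichlet-kernel bound, Cauchy--Schwarz and the exact orthogonality $\E_{X,Y}[\cos(k(X-Y))\cos(\ell(X-Y))]=\tfrac12\delta_{k\ell}$ give immediately
\[
\E_{X,Y}\bigl[\varepsilon_n(X,Y)^2\bigr]=\frac{1}{2n},
\]
hence $\Delta_n(\xi)=\E\bigl|\phi_n(\xi)-e^{-\xi^2/2}\bigr|^2=O(1/\sqrt{n})$ with no domain splitting at all. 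With this rate one takes the subsequence $n^3$, and the rest of your outline goes through verbatim. So your ``delicate technical point'' evaporates once you work in $L^2(\mathbb{P}_{X,Y})$ rather than pointwise.
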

This result has a long heritage, in particular there is a tremendous literature on its extension to lacunary trigonometric polynomials, with large or bounded gaps, i.e. polynomials of the form 
\[
\frac{1}{\sqrt{n}} \sum_{k=1}^n a_k \cos(n_k t)+b_k \sin(n_k t)
\]
with $n_{k+1}/n_k \geq q>1$ or $1< n_{k+1}-n_k = O(1)$, see e.g. \cite{erdos, berkes,goetze,fukuyama} and the references therein.
We rather focus here on the standard case where $n_k=k$, and let us emphasize that the two problems, non-lacunar or lacunar, are of very different nature. 
Indeed, provided that the subsequence  $(n_k)_{k\ge 1}$ has a sufficient growth rate, one has
\[
\frac{1}{\sqrt{n}} \sum_{k=1}^n  \cos(n_k X)\xrightarrow[n\to\infty]{\textrm{law under}~\mathbb{P}_X}~\mathcal{N}\left(0,\frac{1}{2}\right).
\]
In particular, there is no need to add random weights in front of the cosine in order to catch a Gaussian behavior with respect to $\mathbb{P}_X$. Roughly speaking, this is due to the fine arithmetic properties on the subsequence $(n_k)_{k\ge 1}$ that are guaranteed by the lacunarity. Nevertheless, if no lacunarity is imposed, there is no Gaussian limit behavior in general, as illustrated by the simple counterexample
\[
\frac{1}{\sqrt{n}} \sum_{k=1}^n  \cos(k X) = \frac{1}{\sqrt{n}} \frac{\cos\left(\frac{(n+1)X}{2}\right)\sin\left(\frac{nX}{2}\right)}{\sin\left(\frac{X}{2}\right)}\xrightarrow[n\to\infty]{\mathbb{P}_X\, \textrm{almost surely}}~0.
\]
Let us also emphasize that Theorem \ref{gene-quali-theo} or its lacunar analogues are somewhat reminiscent  to a derandomization phenomenon, since the result holds almost surely in the coefficients $(a_k,b_k)_{k \geq 1}$, and the homogenization follows from the single randomness of the uniform variable $X$. 
Such phenomena are at the heart of the method developed in \cite{bourgain,buckley} to get estimates for nodal observables associated to deterministic eigenfunctions on the torus, starting from the ones associated to random Gaussian eigenfunctions. 
\par
\medskip
As stated in the abstract, our main goal in this article is first, to extend the above Salem--Zygmund Theorem \ref{gene-quali-theo} in different directions. These generalizations are described informally just below and in more details in the next Section \ref{sec.intro.SZ}. Secondly, we show that these variations on Salem--Zygmund results allow to deduce almost sure asymptotics for the empirical measure associated with the real roots of the random trigonometric polynomial $f_n$. In particular, in Theorem \ref{theo.as} which is formally stated in Section \ref{sec.statezero} below, 
we prove under mild conditions on the distribution of $(a_k,b_k)_{k \geq1}$, that $\mathbb P$ almost surely, and for any interval $[a,b] \subset [0, 2\pi]$
\begin{equation} \label{eq.conv-ps}
\frac{\mathcal N(f_n,[a,b])}{n}  \xrightarrow[n \to +\infty]{} \frac{b-a}{\pi\sqrt{3}},
\end{equation}
where $\mathcal N(f_n,[a,b])$ denotes the number of real zeros of $f_n$ in the interval $[a,b]$. To the best of our knowledge, this almost sure asymptotics is new, even in the case of Gaussian coefficients. \par
\medskip
In the case where the random coefficients $(a_k,b_k)_{k \geq 1}$ are not Gaussian, an important additional step in the proof of \eqref{eq.conv-ps} consists in proving some $\log$ integrability of $f_n$. To do so, we rely on the nice recent results of \cite{nishry}. Note that in the case of Gaussian coefficients, this delicate step can be bypassed by elementary means. 
Let us also highlight that, since the limit does not depend on the particular distribution of the random coefficients $(a_k,b_k)_{k \geq 1}$, the above almost sure convergence can be seen as a universality phenomenon. In particular, under an extra moment assumption on the coefficients $(a_k,b_k)$, one immediately recovers the main result of \cite{flasche}, since taking the expectation in \eqref{eq.conv-ps}, and using dominated convergence, we get indeed that
\begin{equation} \label{eq.conv-uni}
 \frac{\mathbb E \left[\mathcal N(f_n,[a,b])\right]}{n}  \xrightarrow[n \to +\infty]{} \frac{b-a}{\pi\sqrt{3}}.
\end{equation}
Before doing it more formally in the next section, let us now say a few words on the kind of extensions of Salem--Zygmund result that are  needed to establish the aforementioned almost sure asymptotics. As a first ingredient, we obtain a quantitative version of Theorem \ref{gene-quali-theo}, i.e. an explicit rate of convergence, by using the so-called Stein's method. The latter is indeed a powerful and versatile method enabling to establish quantitative limit theorems for a great variety of target distributions, see \cite{chen} for a nice introduction to the subject. We moreover prove a functional version of Salem--Zygmund result, showing that, $\mathbb P$ almost surely, under $\mathbb P_X$, the sequence of processes $(f_n(X+t/n))_{t \in [0,2\pi]}$ converges in distribution in the $\mathcal C^1$ topology to a stationary limit Gaussian process with $\sin_c$ covariance function. 
These two first results are key steps in our proof of the almost sure convergence \eqref{eq.conv-ps}.
\par
\medskip
We also obtain more general conditions on $X$ so that \eqref{eq.Salem} holds, and we prove that the convergence  holds not only in distribution but also in total variation. Concretely, we prove that $\mathbb P$ almost surely, $f_n(X)$ has a density under $\mathbb P_X$ and the latter converges in $\mathbb L^1$ to the standard Gaussian density. Although there are not used in the almost sure nodal asymptotics, these two results are of independent interest. 
\par
\medskip
As said above, our initial motivation to establish these variations on Salem--Zygmund convergence \eqref{eq.Salem}  is the study of the nodal set associated to the random trigonometric polynomial $f_n$. The number of real zeros of $f_n$ is the object of a vast literature. For example, in the case of Gaussian coefficients, the expected number of real zeros in a given interval has been investigated in \cite{dunnage,samba,farah}, whereas the variance of this number of zeros was first described in \cite{wigman} and then revisited and generalized in \cite{azais,azais2014clt}. For more general coefficients, some universality results at local and global scales were obtained in a series of recent papers, see for instance \cite{nous,flasche,iksanov2016,nousAMS,MR3846831,Bally2018} and the references therein. 
Quite surprisingly, in the case of real roots of random trigonometric polynomials, the question of the almost sure asymptotics \eqref{eq.conv-ps} has not been tackled until now.
To establish such an almost sure result, a classical and natural strategy would be to get some good estimates on the variance or some higher moment, i.e. for some $p$ large enough, to show that
\begin{equation}\label{eq.summand}
\sum_{n \geq 1} \mathbb E \left[ \left|  \frac{\mathcal N(f_n,[a,b])}{n} - \frac{b-a}{\pi\sqrt{3}}\right|^p \right] <+\infty.
\end{equation}
This is precisely the approach used by Ancona and Letendre in their recent paper \cite{ancona} on the almost sure asymptotics of the real roots of Kostlan algebraic polynomials.
It has also been used for instance in \cite{neu} in the case of nodal set of random spherical harmonics. We refer to Theorem 1 in \cite{zelditch}  for related discussions.
Unfortunately, in the case of random trigonometric polynomials, only the variance, thus the case $p=2$, has been investigated in \cite{wigman}, where the authors showed that the summands in \eqref{eq.summand} are exactly of the order $1/n$, which is naturally not sufficient to conclude to the convergence of the above serie. Such a strategy would thus require at least to handle the case $p \geq 4$, which if doable, would imply quite involved computations.
\par
\medskip
The method we employ here is radically different since we work with the sole randomness induced by the variable $X$, hence in essence, our approach provides almost sure results with respect to the probability $\mathbb P$ associated with the random coefficients. In particular, we do not require any estimate on the variance of the number of zeros.

\subsection{Variations on Salem--Zygmund Theorem}\label{sec.intro.SZ}
Let us now describe more formally and in more details our main results in relation with Salem--Zygmund Theorem \ref{gene-quali-theo}. 
The proofs of Theorems \ref{Theo-Salem-quanti}, \ref{theo.functional}, \ref{SZ-gene}, and \ref{theo.dtv} stated below are the object of the whole Section \ref{sec.proof} of the article.

\subsubsection{A quantitative version of Salem--Zygmund Theorem}
A first direction in which Theorem \ref{gene-quali-theo} can be extended is a quantification of the convergence in distribution for an appropriate choice of metric.
Concretely, let us introduce the $\mathcal C^3$ distance with respect to $X$, that is defined by 
\[
d^X_{\mathcal{C}^3}\left(U,V\right) :=
 \sup_{\substack{\|\phi^{(k)}\|_\infty\le 1\\0\le k\le 3}}\E_X\left[\phi\left(U\right)-\phi\left(V\right)\right],
 \]
whenever $U$ and $V$ are measurable with respect to $\mathcal F \otimes \mathcal B([0,2\pi])$. Then, using the separability of the space of $\mathcal C^3$ functions with bounded derivatives,  the above distance $d^X_{\mathcal{C}^3}\left(U,V\right)$ is a $\mathcal F-$measurable random variable. It actually quantifies the distance between the laws of $U$ and $V$, under $\mathbb P_X$, conditionally on $\mathcal F$. Therefore, it is a convenient tool to measure the rate of convergence in Theorem \ref{gene-quali-theo}. 

\begin{theo}\label{Theo-Salem-quanti}Suppose that $(a_k,b_k)_{k \geq 1}$ is a sequence of independent and identically distributed random variables such that $\E\left[a_1\right]=0,\E\left[a_1^2\right]=1$ and $\E\left[a_1^4\right]<\infty.$ Setting,
\[
C(a_1):=81\sqrt{13+|\mathbb E[a_1^3]|}+8\sqrt{\E\left[a_1^4\right]}+\sqrt{2}+8\E\left[|a_1|^3\right]+24 \E\left[|a_1|\right],
\]
if $G$ is $\sigma(X)-$measurable and if $G\sim\mathcal{N}(0,1)$ under $\mathbb P_X$, then one has
\begin{equation}\label{main-ineq-theo-quanti}
\E\left[ d^X_{\mathcal{C}^3}\left(f_n(X),G\right)\right]\le \frac{C(a_1)}{\sqrt{n}}.
\end{equation}
\end{theo}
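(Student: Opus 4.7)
The plan is to apply Stein's method for the standard Gaussian. For any test function $\phi$ with $\|\phi^{(k)}\|_\infty \leq 1$ for $0 \leq k \leq 3$, let $\psi = \psi_\phi$ denote the Stein solution of $\psi'(x) - x\psi(x) = \phi(x) - \E[\phi(G)]$, whose derivatives $\psi^{(k)}$ for small $k$ are uniformly bounded in sup norm by classical estimates. Evaluating at $W = f_n(X)$, one has
\[
\E_X[\phi(f_n(X))] - \E[\phi(G)] = \E_X\bigl[\psi'(f_n(X)) - f_n(X)\,\psi(f_n(X))\bigr] =: A_n(\phi),
\]
so that bounding $d^X_{\mathcal{C}^3}(f_n(X),G)$ reduces to controlling $A_n(\phi)$ uniformly in admissible $\phi$, and then taking the expectation with respect to $\mathbb P$.

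The crucial step is a Fourier-type integration by parts in the variable $X$. Exploiting $\cos(kX) = \tfrac{1}{k}\partial_X\sin(kX)$, $\sin(kX) = -\tfrac{1}{k}\partial_X\cos(kX)$, together with the $2\pi$-periodicity of $\psi\circ f_n$ (which kills boundary terms), one obtains
\[
\E_X[f_n(X)\,\psi(f_n(X))] = \E_X\bigl[H_n(X)\,f_n'(X)\,\psi'(f_n(X))\bigr],
\]
where $H_n(X) := \frac{1}{\sqrt{n}}\sum_{k=1}^n \frac{-a_k\sin(kX) + b_k\cos(kX)}{k}$ is a kind of Fourier antiderivative of $f_n$. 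Orthogonality of the trigonometric system then gives $\E_X[H_n f_n'] = \frac{1}{2n}\sum_{k=1}^n(a_k^2+b_k^2) =: \sigma_n^2$, so writing $H_n(X)f_n'(X) = \sigma_n^2 + R_n(X)$ with $\E_X[R_n]=0$, the Stein discrepancy splits as
\[
A_n(\phi) = (1-\sigma_n^2)\,\E_X[\psi'(f_n(X))] \;-\; \E_X\bigl[R_n(X)\,\psi'(f_n(X))\bigr].
\]

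The first piece is controlled by $|1-\sigma_n^2|\cdot\|\psi'\|_\infty$; since $\sigma_n^2$ is the average of $n$ i.i.d. summands of finite variance (ensured by $\E[a_1^4]<\infty$), one has $\E\,|1-\sigma_n^2|\leq C/\sqrt{n}$, with a constant depending on $\sqrt{\E[a_1^4]}$. For the second piece, Cauchy--Schwarz gives $|\E_X[R_n\psi'(f_n)]|\leq \|\psi'\|_\infty\sqrt{\E_X[R_n^2]}$, and the quantity $\E\,\E_X[R_n^2]$ is computed by expanding $R_n^2$ into a quadruple sum of trigonometric products, using orthogonality in $X$ to isolate constant Fourier modes, and averaging over $\mathbb P$. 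The key feature is that the $1/k$ prefactors in $H_n$ force the arising multiple series to be dominated by $\sum_k k^{-2}=O(1)$, yielding $\E\,\E_X[R_n^2]=O(1/n)$; carefully tracking how the moments $\E[|a_1|]$, $\E[|a_1|^3]$, $\E[a_1^3]$ and $\E[a_1^4]$ enter at each step produces the explicit constant $C(a_1)$.

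The main obstacle is precisely the combinatorial estimate of $\E\,\E_X[R_n^2]$: one must enumerate all non-vanishing products of four trigonometric factors $\cos(k_iX)$, $\sin(k_iX)$ arising in $R_n^2$, identify the various coincidence patterns among the four indices that generate constant Fourier modes, and verify that the resulting $1/(k_1k_2\ell_1\ell_2)$-type prefactors combine with those arithmetic constraints to produce the sharp $1/n$ rate rather than something weaker. It is at this step that the fourth moment assumption on the coefficients and the delicate structure of $C(a_1)$ genuinely materialize.
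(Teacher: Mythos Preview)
Your integration-by-parts idea in the $X$ variable is elegant, but it does not give the rate you claim. The flaw is in the assertion that $\E\,\E_X[R_n^2]=O(1/n)$. Write $U_k(X):=-a_k\sin(kX)+b_k\cos(kX)$, so that $H_n=\frac{1}{\sqrt n}\sum_k \tfrac{1}{k}U_k$ and $f_n'=\frac{1}{\sqrt n}\sum_l l\,U_l$; then
\[
(H_nf_n')^2=\frac{1}{n^2}\sum_{k,l,k',l'}\frac{ll'}{kk'}\,U_kU_lU_{k'}U_{l'}.
\]
When you average over both $X$ and $\mathbb P$, only configurations with at most two distinct indices survive (by independence and centering of the $a_k,b_k$). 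The pairing $k=k'$, $l=l'$, $k\neq l$ carries the weight $l^2/k^2$, and since $\E\,\E_X[U_k^2U_l^2]=\E\bigl[\tfrac{(a_k^2+b_k^2)(a_l^2+b_l^2)}{4}\bigr]=1$, this block alone contributes
\[
\frac{1}{n^2}\sum_{k\neq l}\frac{l^2}{k^2}
\;\sim\;\frac{1}{n^2}\Bigl(\sum_{l=1}^n l^2\Bigr)\Bigl(\sum_{k=1}^n \frac{1}{k^2}\Bigr)
\;\asymp\; n,
\]
which is not compensated by any other pairing (all contributions are nonnegative). Hence $\E\,\E_X[R_n^2]\asymp n$, not $O(1/n)$; plugging this into your Cauchy--Schwarz bound yields only $\E|A_n(\phi)|=O(\sqrt n)$. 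The heuristic ``the $1/k$ prefactors in $H_n$ force $\sum_k k^{-2}=O(1)$'' overlooks that $f_n'$ carries factors of $l$ from differentiation: these cancel the $1/k$ only on the diagonal, while off-diagonally they produce the divergent $l^2/k^2$ weights above.

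The paper's route is quite different: it uses the classical leave-one-out Stein decomposition. Writing $S_n^k=S_n-R_k$ and Taylor-expanding $\psi(S_n/\sqrt n)$ around $S_n^k/\sqrt n$ splits $\E_X[\frac{S_n}{\sqrt n}\psi(\frac{S_n}{\sqrt n})]$ into three pieces $A_n,B_n,C_n$. The terms $B_n$ and $C_n$ are handled pathwise by elementary trigonometric orthogonality and the law of large numbers. The genuinely delicate term is $A_n=\frac{1}{\sqrt n}\sum_k\E_X[R_k(X)\psi(S_n^k(X)/\sqrt n)]$: the paper first reduces to complex exponentials $\psi(\cdot)=e^{i\xi\cdot}$ via Fourier inversion, and then controls $\E[|A_n|^2]$ by a second-moment computation under $\mathbb P\otimes\mathbb P_X\otimes\mathbb P_Y$ (introducing an independent copy $Y$ of $X$), which exploits the \emph{independence of the random coefficients under $\mathbb P$} to kill most cross terms. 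This is where the $1/\sqrt n$ rate and the precise constant $C(a_1)$ come from---not from any smallness of a carr\'e-du-champ $R_n$ under $\mathbb P_X$ alone.
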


The last estimate \eqref{main-ineq-theo-quanti} gives an upper bound for the expected rate of convergence in Theorem \ref{gene-quali-theo}. Using some standard Borel--Cantelli arguments, it is then possible to extract a rate of convergence in the almost sure sense.
\begin{coro}\label{speed-cv-ps}
Under the assumptions of Theorem \ref{Theo-Salem-quanti}, for every $\beta<\frac{1}{6}$, $\mathbb{P}$ almost surely, there exists a constant $C(\omega)>0$ such that for all $n\ge 1$
\begin{equation}\label{speed-ps-ineq}
d_{\mathcal{C}^3}^X\left(f_n(X),G\right)\le \frac{C}{n^\beta}.
\end{equation}
\end{coro}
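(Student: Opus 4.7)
The plan is to combine Theorem \ref{Theo-Salem-quanti} with a Markov--Borel--Cantelli argument along a polynomial subsequence, and then to interpolate to arbitrary $n$.

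First, by Markov's inequality applied to \eqref{main-ineq-theo-quanti},
\[
\mathbb P\bigl(d^X_{\mathcal C^3}(f_n(X), G) \geq n^{-\beta}\bigr) \leq C(a_1)\, n^{\beta - 1/2}.
\]
This tail is not summable in $n$, so I would restrict to a subsequence $n_k = \lfloor k^\alpha \rfloor$ with $\alpha > 1/(1/2 - \beta)$. Since $\beta < 1/6$, the choice $\alpha = 3$ is admissible and yields $\sum_k k^{3\beta - 3/2} < +\infty$. Borel--Cantelli then provides, $\mathbb P$ almost surely, an integer $k_0(\omega)$ such that $d^X_{\mathcal C^3}(f_{n_k}(X), G) \leq n_k^{-\beta}$ for every $k \geq k_0$.

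Next, I would interpolate to $n \in [n_k, n_{k+1}]$ through the triangle inequality
\[
d^X_{\mathcal C^3}(f_n(X), G) \leq d^X_{\mathcal C^3}(f_n(X), f_{n_k}(X)) + d^X_{\mathcal C^3}(f_{n_k}(X), G).
\]
Since the admissible test functions are $1$-Lipschitz, the first term is bounded by $\mathbb E_X[|f_n(X) - f_{n_k}(X)|] \leq \mathbb E_X[(f_n - f_{n_k})^2]^{1/2}$, and an orthogonality (Parseval) computation in $L^2([0,2\pi])$ gives
\[
\mathbb E_X\bigl[(f_n - f_{n_k})^2\bigr] = \tfrac12 \Bigl(\tfrac{1}{\sqrt n} - \tfrac{1}{\sqrt{n_k}}\Bigr)^{\!2} \sum_{j=1}^{n_k}(a_j^2 + b_j^2) + \tfrac{1}{2n} \sum_{j=n_k+1}^{n} (a_j^2 + b_j^2).
\]
The strong law of large numbers applied to $(a_j^2 + b_j^2)_{j\ge 1}$ (which holds under our finite fourth moment assumption) yields $\sum_{j=1}^N(a_j^2 + b_j^2) = O(N)$ almost surely, hence $\mathbb E_X[(f_n - f_{n_k})^2] = O((n_{k+1}-n_k)/n_k) = O(k^{-1})$, and therefore the interpolation error is a.s. $O(k^{-1/2})$.

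The conclusion is a matching of exponents: $\beta < 1/6$ entails $3\beta < 1/2$, so that $k^{-1/2} \leq k^{-3\beta} = n_k^{-\beta}$; moreover $n_{k+1}/n_k \to 1$, so $n_k^{-\beta} \leq 2^\beta n^{-\beta}$ for $k$ large. Combining the two pieces yields $d^X_{\mathcal C^3}(f_n(X), G) \leq C(\omega)\, n^{-\beta}$ for $n$ large enough, and absorbing finitely many early values into $C(\omega)$ gives the inequality for all $n \geq 1$. The only delicate point I anticipate is the interpolation step, whose $L^2$ cost along the cubic grid $n_k = k^3$ matches exactly the $1/\sqrt n$ rate of Theorem \ref{Theo-Salem-quanti}, and this balance is precisely what pins the threshold at $\beta = 1/6$.
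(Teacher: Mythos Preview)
Your proof is correct and follows essentially the same route as the paper: a Borel--Cantelli argument along the cubic subsequence $n_k=k^3$ (which is exactly what the constraint $\beta<1/6$ allows), followed by an $L^2(\mathbb P_X)$ interpolation for intermediate $n$ via the Lipschitz bound $d^X_{\mathcal C^3}(U,V)\le \mathbb E_X[|U-V|]$. Your invocation of the strong law of large numbers to control $\sum_{j\le N}(a_j^2+b_j^2)$ is in fact a bit more careful than the paper's computation, which treats these sums as if they equalled their expectations.
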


\begin{rem}\label{Remark-sharp}
Let us emphasize here that the content of Corollary \ref{speed-cv-ps} is not expected to be sharp, as Theorem \ref{Theo-Salem-quanti} indicates that the correct order of convergence is of magnitude $1/\sqrt{n}$, as in the classical Berry--Essen inequality. However, the estimate \eqref{speed-ps-ineq} is sufficient to carry out our proof of the almost sure convergence of empirical measure of roots associated with random trigonometric polynomials. 
\end{rem}
\noindent
The proofs of Theorem \ref{Theo-Salem-quanti} and of its Corollary \ref{speed-cv-ps} are the object of Section \ref{sec.quanti} below. 

\subsubsection{A functional version of Salem--Zygmund Theorem}
Let us introduce the stochastic process $(g_n(t))_{t \in [0,2\pi]}$ defined by 
\[
g_n(t):=f_n\left(X+ \frac{t}{n}\right).
\]
The convergence \eqref{eq.Salem} is naturally equivalent to the fact that $\mathbb P$ almost surely, under $\mathbb P_X$, the sequence $g_n(0)$ converges in distribution to a Gaussian variable. Our next result show that, in fact, the whole process $(g_n(t))_{t \in [0,2\pi]}$ actually converges to an explicit stationary Gaussian process. 
 
\begin{theo}\label{theo.functional}Suppose that $(a_k,b_k)_{k \geq 1}$ is a sequence of independent and identically distributed random variables that are centered with unit variance. Then $\mathbb P$ almost surely, as $n$ goes to infinity, the process $(g_n(t))_{t \in [0,2\pi]}$ converges in distribution in the $\mathcal C^1$ topology, to a stationary Gaussian process $(g_{\infty}(t))_{t \in [0,2\pi]}$ with $\sin_c$ covariance function, i.e.
\[
\mathbb E_X[ g_{\infty}(t) g_{\infty}(s)] = \frac{\sin(t-s)}{t-s}.
\]
\end{theo}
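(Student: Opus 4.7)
The plan is to establish, $\mathbb{P}$-almost surely, (a) the convergence of the finite-dimensional distributions of $g_n$ under $\mathbb{P}_X$ to those of a centered Gaussian process with covariance $\sin(t-s)/(t-s)$, and (b) the tightness of $(g_n)$ in $\mathcal{C}^1([0,2\pi])$ under $\mathbb{P}_X$. These two ingredients, combined with the fact that the law of a continuous Gaussian process is determined by its finite-dimensional marginals, imply the announced convergence in distribution in $\mathcal{C}^1$.

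For (a), I would fix $t_1, \ldots, t_p \in [0, 2\pi]$ and reals $\lambda_1, \ldots, \lambda_p$, and set $h_n := \sum_j \lambda_j g_n(t_j)$. Using the identity $\cos(k(X+t_j/n)) = \cos(kX)\cos(kt_j/n) - \sin(kX)\sin(kt_j/n)$ and its sine analogue, $h_n$ rewrites as a trigonometric polynomial in $X$ of degree $n$,
\[
h_n = \frac{1}{\sqrt n}\sum_{k=1}^n \bigl(\alpha_k^{(n)}\cos(kX)+\beta_k^{(n)}\sin(kX)\bigr),
\]
with $\alpha_k^{(n)} = a_k C_k^{(n)} + b_k S_k^{(n)}$, $\beta_k^{(n)} = -a_k S_k^{(n)} + b_k C_k^{(n)}$, and $C_k^{(n)}+iS_k^{(n)} = \sum_j \lambda_j e^{ikt_j/n}$. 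The orthogonality of the trigonometric system under $\mathbb{P}_X$ gives
\[
\E_X[h_n^2] = \frac{1}{2n}\sum_{k=1}^n(a_k^2+b_k^2)\sum_{j,\ell}\lambda_j\lambda_\ell\cos(k(t_j-t_\ell)/n),
\]
and Abel summation combined with the strong law of large numbers applied to $(a_k^2+b_k^2-2)_{k\ge 1}$ shows that this variance converges $\mathbb{P}$-a.s.\ to the Riemann integral $\sum_{j,\ell}\lambda_j\lambda_\ell\int_0^1\cos(u(t_j-t_\ell))\,du$, which is exactly the prescribed $\E_X\bigl[(\sum_j\lambda_j g_\infty(t_j))^2\bigr]$. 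A Salem--Zygmund-type CLT for $h_n$, obtained by adapting the Stein-method proof of Theorem \ref{Theo-Salem-quanti} to the coefficients $\alpha_k^{(n)}, \beta_k^{(n)}$ (which still satisfy a Lindeberg-type condition), then yields $h_n \to \mathcal{N}(0, \sigma^2)$ in law under $\mathbb{P}_X$, $\mathbb{P}$-a.s. The Cram\'er--Wold device concludes step (a).

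For (b), I would exploit the stationarity of $g_n$ under $\mathbb{P}_X$ that follows from the $2\pi$-periodicity of $f_n$ and the uniformity of $X$. Using the same orthogonality relations together with $\sin^2(u) \le u^2$, one obtains
\[
\E_X\bigl[(g_n'(t) - g_n'(s))^2\bigr] = \frac{2}{n^3}\sum_{k=1}^n k^2(a_k^2+b_k^2)\sin^2(k(t-s)/(2n)) \le \frac{(t-s)^2}{2n^5}\sum_{k=1}^n k^4(a_k^2+b_k^2),
\]
and a weighted strong law of large numbers yields $\frac{1}{n^5}\sum_k k^4(a_k^2+b_k^2)\to 2/5$, $\mathbb{P}$-a.s., so that $\E_X[(g_n'(t)-g_n'(s))^2] \le C(\omega)(t-s)^2$ uniformly in $n$. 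This Kolmogorov--Chentsov-type moment bound (with exponents $\alpha=2$, $\beta=1$), combined with the marginal tightness of $g_n(0) = f_n(X)$ provided by Theorem \ref{gene-quali-theo}, gives tightness of $(g_n)$ in $\mathcal{C}^1([0,2\pi])$ under $\mathbb{P}_X$ almost surely in $\omega$.

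The main obstacle will be the CLT for $h_n$ in step (a) under the sole unit-variance assumption: neither Theorem \ref{gene-quali-theo} (which requires a third moment) nor Theorem \ref{Theo-Salem-quanti} (which requires a fourth) applies off the shelf, so one will likely need a Lindeberg truncation of $(a_k, b_k)$ combined with a direct control of the truncation error in $L^2$. The weighted SLLN used in the tightness argument is a secondary technicality that can be settled by elementary variance estimates given the polynomial growth of the weights.
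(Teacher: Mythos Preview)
Your overall architecture matches the paper's: finite-dimensional convergence plus $\mathcal{C}^1$-tightness. The tightness argument (b) is essentially identical to the paper's Proposition~\ref{prop.tension}; the paper uses the cruder bound $\sin^2(k(t-s)/(2n)) \le (t-s)^2/4$ (valid since $k \le n$), which reduces everything to the ordinary SLLN for $\frac{1}{n}\sum_k(a_k^2+b_k^2)$ rather than your weighted version, but both work.

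The genuine difference is in (a). The paper does \emph{not} adapt a Stein-method or Salem--Zygmund CLT to your $h_n$. Instead, for fixed $(t,\lambda)$, it directly estimates
\[
\Delta_n := \mathbb{E}\Bigl[\bigl|\,\mathbb{E}_X[e^{i\sum_j\lambda_j g_n(t_j)}] - e^{-Q_n(t,\lambda)/2}\,\bigr|^2\Bigr]
\]
by expanding the square via an independent copy $Y$ of $X$ (Fubini), replacing the $(a_k,b_k)$ by Gaussians inside the resulting $\mathbb{P}$-expectations (a Lindeberg-type swap at the level of characteristic functions), and then computing those Gaussian expectations in closed form. What remains is controlled by $\mathbb{E}_{X,Y}[|\varepsilon_n|]$ with $\varepsilon_n$ a Riemann-sum error, giving $\Delta_n = O(n^{-1/2})$. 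Borel--Cantelli along the subsequence $n^\gamma$ ($\gamma>2$) plus an $L^2(\mathbb{P}_X)$ interpolation between consecutive $n^\gamma$ then yields the $\mathbb{P}$-a.s.\ convergence. The point of this averaging trick is precisely to sidestep the moment obstacle you flag: no Stein bound, no fourth-moment constant, and no explicit truncation appear. Your route via adapting Theorem~\ref{Theo-Salem-quanti} and then truncating is plausible, but making the truncation error and the Stein rate simultaneously summable for Borel--Cantelli under only a unit-variance assumption would require some care; the paper's characteristic-function averaging is the cleaner way to obtain the statement under the hypotheses as written.
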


Associated with the continuous mapping Theorem and the continuity of the number of zeros with respect to the $\mathcal C^1$ topology at non-degenerate points, the last Theorem ensures that $\mathbb P$ almost surely, the number of roots of $g_n$ in a compact set, converges in distribution under $\mathbb P_X$, towards its analogue for $g_{\infty}$, see Section \ref{sec.TCLfunc} below.
\par
\medskip
\noindent
The proof of Theorem \ref{theo.functional} is the object of Section \ref{sec.functional} below.

\subsubsection{Salem--Zygmund Theorem for a non-uniform distribution}
\label{sec.introquali}
In the next Section \ref{sec.quali} below, we give an alternative proof of Theorem \ref{gene-quali-theo} which noticeably  differs from the original one. Our strategy mainly exploits the fact that, for all $\xi \in \mathbb R$
\begin{equation}\label{averaging-technique}
\E\left[\left|\E_X\left[e^{i\xi f_n(X)}-e^{-\frac{\xi^2}{2}}\right]\right|^2\right]\to 0
\end{equation} 
sufficiently fast in order to use some Borel--Cantelli argument. In order to do so, if $Y$ denotes an independent copy of $X$, one must show that $\varepsilon_n(X,Y):=\frac{1}{n}\sum_{k=1}^n \cos\left( k (X-Y)\right)$ tends to zero in $\mathbb L^2$ at some adequate speed. In the original case treated by Salem and Zygmund where $X$ and $Y$ are independent uniform variables, we have the exact computation $\mathbb E_{X,Y}[\varepsilon_n(X,Y)^2]=1/2n$ which is sufficient to conclude. 
But this \textit{averaging} strategy applies to more general frameworks. Indeed, the use of Fubini inversion of sums in \eqref{averaging-technique} reveals that a quantified bivariate central convergence of $(f_n(X),f_n(Y))$ as $n\to\infty$ is enough to carry out this strategy. Following the latter method, one can indeed extend Theorem \ref{gene-quali-theo} to more general distributions $\mathbb{P}_X$ over $[0,2\pi]$.

\begin{theo}\label{SZ-gene}
Let $(a_k,b_k)_{k\ge 1}$ be a sequence of independent and identically distributed random variables that are centered with unit variance and which admit a moment of order $\beta\ge3$. Let $X$ be an independent random variable on $[0,2\pi]$ whose Fourier coefficients satisfy
$$\exists \alpha>0,\,\forall k\in\mathbb{Z}/\{0\},\,\left|\widehat{\mathbb{P}_X}(k)\right|\le \frac{C}{|k|^\alpha}.$$
Then,  provided that  $\beta>\frac{2}{\min\left(\alpha,\frac{1}{2}\right)}$, $\mathbb P$ almost surely, under $\mathbb{P}_X$, one has
\[
f_n(X)\xrightarrow[n\to\infty]{\text{law under}~\mathbb{P}_X}~\mathcal{N}(0,1).
\]
\end{theo}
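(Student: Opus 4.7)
The plan is to implement the ``averaging'' strategy alluded to in \eqref{averaging-technique}. By Lévy's continuity theorem, it suffices to show that for every fixed $\xi \in \mathbb R$, $\mathbb P$ almost surely, $\mathbb E_X[e^{i\xi f_n(X)}] \to e^{-\xi^2/2}$. I would obtain this via a Borel--Cantelli argument applied to the $\mathbb L^2(\mathbb P)$-deviation. Introducing an independent copy $Y$ of $X$ and expanding the square via Fubini between $\mathbb P$ and $\mathbb P_X\otimes \mathbb P_Y$, the deviation rewrites as
\[
\mathbb E\left[\left|\mathbb E_X[e^{i\xi f_n(X)}] - e^{-\xi^2/2}\right|^2\right] = \mathbb E_{X,Y}[H_n(X,Y)] - 2 e^{-\xi^2/2}\,\mathrm{Re}\bigl(\mathbb E_X[h_n(X)]\bigr) + e^{-\xi^2},
\]
where $h_n(x) := \mathbb E[e^{i\xi f_n(x)}]$ and $H_n(x,y) := \mathbb E[e^{i\xi(f_n(x)-f_n(y))}]$ are ordinary products of characteristic functions of the $a_k$ and $b_k$.

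The next step is to Taylor-expand $\log \varphi_{a_1}$ and $\log \varphi_{b_1}$ in these products. Since the summands are independent, centered, with variance $1/n$ and $\beta$-th absolute moment of order $n^{-\beta/2}$, a Lindeberg/Berry--Esseen-type estimate quantified by $\mathbb E[|a_1|^3] < \infty$ yields
\[
h_n(x) = e^{-\xi^2/2}\bigl(1 + O_\xi(n^{-1/2})\bigr), \qquad H_n(x,y) = e^{-\xi^2(1-\varepsilon_n(x-y))}\bigl(1 + O_\xi(n^{-1/2})\bigr),
\]
where $\varepsilon_n(u) := \frac{1}{n}\sum_{k=1}^n \cos(ku)$. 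Substituting back, the dominant term becomes $e^{-\xi^2}\mathbb E_{X,Y}[e^{\xi^2 \varepsilon_n(X-Y)} - 1]$ up to an additive $O(n^{-1/2})$; using $|e^u-1|\le |u|e^{|u|}$ and $|\varepsilon_n|\le 1$, this is controlled by $C(\xi)\sqrt{\mathbb E_{X,Y}[\varepsilon_n(X-Y)^2]}$. The crucial Fourier input is the Parseval-type identity
\[
\mathbb E_{X,Y}[\cos(k(X-Y))] = |\widehat{\mathbb P_X}(k)|^2, \qquad k \neq 0,
\]
which, together with $|\widehat{\mathbb P_X}(k)| \leq C/|k|^\alpha$, gives after expanding the defining square that $\mathbb E_{X,Y}[\varepsilon_n(X-Y)^2] = O(n^{-\min(1,2\alpha)})$. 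Altogether one obtains
\[
\mathbb E\bigl[|\mathbb E_X[e^{i\xi f_n(X)}] - e^{-\xi^2/2}|^2\bigr] = O\bigl(n^{-\delta}\bigr), \qquad \delta := \min(1/2, \alpha).
\]

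Since this rate is not summable in $n$, I would pass to a polynomial subsequence $n_j := \lfloor j^\theta \rfloor$ with $\theta \delta > 1$, yielding almost sure convergence of $A_n := \mathbb E_X[e^{i\xi f_n(X)}]$ along $(n_j)$ by Borel--Cantelli. To fill in the gaps one uses the Lipschitz-type bound $|A_n - A_{n_j}| \le |\xi|\sqrt{\mathbb E_X[(f_n(X)-f_{n_j}(X))^2]}$, together with Rosenthal/Doob-type inequalities applied to the martingale $(M_n(x) := \sum_{k=1}^n (a_k\cos(kx)+b_k\sin(kx)))_{n\ge 1}$ to estimate the $\beta$-th moment $\mathbb E\bigl[\sup_{n_j\le n\le n_{j+1}}|f_n - f_{n_j}|^\beta\bigr]$. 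A further Markov/Borel--Cantelli step extends the convergence to the full sequence.

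The main obstacle lies in the delicate balancing at this final stage: the subsequence spacing $\theta$ must be chosen simultaneously large enough to ensure summability of the $\mathbb L^2$-deviations along $(n_j)$ (governed by $\alpha$), and small enough that the gap-oscillation bound, controlled via the $\beta$-th moment, is summable in $j$ (governed by $\beta$). This twofold constraint on $\theta$ is precisely what forces the threshold $\beta > 2/\min(\alpha,1/2)$ in the statement. A secondary technical point worth flagging is the uniformity in $(x,y)$ of the bivariate Berry--Esseen bound for $H_n$: the covariance matrix of $(f_n(x),f_n(y))$ degenerates when $\varepsilon_n(x-y)$ is close to $\pm 1$, which may require splitting the $(X,Y)$-integration depending on whether $|X-Y|$ is of order at most $1/n$.
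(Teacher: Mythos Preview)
Your proposal is correct and follows essentially the same route as the paper's proof: the averaging trick via an independent copy $Y$, Taylor expansion of the log-characteristic function to reach $H_n(x,y)=e^{-\xi^2(1-\varepsilon_n(x-y))}(1+O_\xi(n^{-1/2}))$, the Fourier-decay bound $\mathbb E_{X,Y}[\varepsilon_n^2]=O(n^{-\min(1,2\alpha)})$, Borel--Cantelli along a polynomial subsequence, and a higher-moment argument to close the gaps. The paper invokes hypercontractivity for multilinear polynomials where you use Rosenthal/Doob martingale inequalities; either device yields the required $O(m^{-p/a})$ bound and forces the same threshold $\beta>2/\min(\alpha,1/2)$.

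One remark: the degeneracy concern you flag at the end is a red herring in your own setup. Since you estimate $H_n(x,y)=\mathbb E[e^{i\xi(f_n(x)-f_n(y))}]$ by direct Taylor expansion of $\log\varphi_{a_1}$ rather than by a black-box multivariate Berry--Esseen theorem, no covariance matrix is inverted and the bound $|H_n(x,y)-e^{-\xi^2(1-\varepsilon_n(x-y))}|\le C(\xi)n^{-1/2}$ holds \emph{uniformly} in $(x,y)$, including the near-diagonal region. No splitting of the $(X,Y)$-integration is needed.
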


\begin{rem}
The condition imposed on $X$ in the previous Theorem \ref{SZ-gene} is satisfied whenever $X$ admits a density which is Hölder regular. It can also be satisfied for non absolutely continuous distributions like for instance uniform distributions on Cantor sets.
\end{rem}
\noindent
The proof of Theorem \ref{SZ-gene} is the object of Section \ref{sec.quali} below.

\subsubsection{Salem--Zygmund Theorem in total variation}\label{sec.statetv}
Our last extension of Salem--Zygmund Theorem is a total variation version of the convergence in distribution \eqref{eq.Salem}.
As before, for $U$ and $V$ two random variables that are measurable with respect to $\mathcal F \otimes \mathcal B([0,2\pi])$, let us introduce the total variation distance with respect to $\mathbb P_X$
\[
d_{TV}^X\left(U,V\right) :=
 \sup_{||\phi||_\infty \le 1 }\E_X\left[\phi\left(U\right)-\phi\left(V\right)\right].
 \]
As for the $\mathcal C^3$ distance with respect to $X$, the above quantity represents  the conditional total variation distance with respect to $X$, conditionally on $\mathcal F$. It induces a much stronger topology than the convergence in distribution, hence the next Theorem is a substantial reinforcement of Theorem \ref{gene-quali-theo}. 

\begin{theo}\label{theo.dtv}Let $(a_k,b_k)_{k\ge 1}$ be independent and identically distributed random variables that are centered with unit variance and admit a third moment.
Almost surely with respect to the probability $\mathbb P$, if $G$ is $\sigma(X)-$measurable and if $G\sim\mathcal{N}(0,1)$ under $\mathbb P_X$, then as $n$ goes to infinity, we have 
\[
\lim_{n \to +\infty} \mathrm{d}_{TV}^X\left( f_n(X), G\right)=0.
\]
\end{theo}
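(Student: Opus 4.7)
The plan is to upgrade the weak convergence of Theorem~\ref{gene-quali-theo} to total variation via a Gaussian smoothing argument. Almost surely, $f_n$ is a non-constant real-analytic trigonometric polynomial with only finitely many critical points in $[0,2\pi]$, so the coarea formula provides a density $p_n$ for $f_n(X)$ under $\mathbb{P}_X$. Introducing an independent auxiliary Gaussian $N_\epsilon\sim\mathcal{N}(0,\epsilon^2)$ on an enlarged probability space, I would decompose
\[
d_{TV}^X(f_n(X),G) \le d_{TV}^X\!\bigl(f_n(X),f_n(X){+}N_\epsilon\bigr) + d_{TV}^X\!\bigl(f_n(X){+}N_\epsilon, G{+}N_\epsilon\bigr) + d_{TV}(G{+}N_\epsilon, G),
\]
and send $n\to\infty$ first, then $\epsilon\to 0$. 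The last term is bounded by $C\epsilon$ by a direct computation on Gaussian densities.

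For the middle term, the density of $f_n(X)+N_\epsilon$ under $\mathbb{P}_X$ is the convolution $p_n\ast\phi_\epsilon$, whose Fourier transform is $\hat p_n(\xi)\,e^{-\epsilon^2\xi^2/2}$ with $\hat p_n(\xi):=\mathbb{E}_X[e^{i\xi f_n(X)}]$. Cauchy--Schwarz on a window $[-R,R]$ combined with Plancherel yields
\[
\int_{-R}^{R}\bigl|p_n\ast\phi_\epsilon - \phi_G\ast\phi_\epsilon\bigr|\,dy \le \sqrt{R/\pi}\,\bigl\|(\hat p_n-e^{-\xi^2/2})\,e^{-\epsilon^2\xi^2/2}\bigr\|_{L^2(\mathbb{R})},
\]
while the tails on $|y|>R$ are controlled by $\mathbb{E}_X[f_n(X)^2]/R^2=\tfrac{1}{2nR^2}\sum_{k=1}^n(a_k^2+b_k^2)$ (plus a Gaussian tail from $N_\epsilon$ and $G$), which is $O(1/R^2)$ almost surely by the strong law of large numbers. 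Theorem~\ref{gene-quali-theo} gives the pointwise convergence $\hat p_n(\xi)\to e^{-\xi^2/2}$ $\mathbb{P}$-a.s., and since $|\hat p_n|\le 1$, dominated convergence against the integrable weight $e^{-\epsilon^2\xi^2}$ sends the $L^2$ norm above to zero. Hence, for every fixed $\epsilon>0$, the middle summand tends to $0$ $\mathbb{P}$-almost surely.

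The crux of the argument is the first summand, which equals $\tfrac12\|p_n-p_n\ast\phi_\epsilon\|_{L^1}$ and must vanish with $\epsilon$ \emph{uniformly in $n$}, $\mathbb{P}$-almost surely. Writing
\[
\|p_n-p_n\ast\phi_\epsilon\|_{L^1} \le \int\phi_\epsilon(z)\,\|p_n(\cdot-z)-p_n\|_{L^1}\,dz,
\]
one is led to a uniform small-ball estimate: via the coarea formula, this reduces to showing that the L\'evy concentration function $\rho_n(z):=\sup_y \mathbb{P}_X(f_n(X)\in[y,y+z])$ satisfies $\sup_{n\ge 1}\rho_n(z)\to 0$ as $z\to 0$, $\mathbb{P}$-a.s. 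Near a critical point $x^*$ of $f_n$, Taylor expansion gives $|\{x:f_n(x)\in[y,y+z]\}|\lesssim\sqrt{z/|f_n''(x^*)|}$; the typical size $|f_n''(x^*)|\asymp n^{3/2}$ together with a Kac--Rice-type first moment computation on $\operatorname{Crit}(f_n)$, combined with a Borel--Cantelli argument, is expected to yield a bound $\rho_n(z)\le C(\omega)\sqrt{z}$ uniformly in $n$, $\mathbb{P}$-a.s. Once this uniform small-ball estimate is in hand, taking $n\to\infty$ and then $\epsilon\to 0$ in the decomposition concludes the proof; this last step is the main technical obstacle, essentially requiring one to rule out abnormally flat critical points of $f_n$ simultaneously for all large $n$.
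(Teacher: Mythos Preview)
Your smoothing strategy is conceptually sound and different from the paper's route, but the first summand is a genuine gap, not merely a technicality. The reduction you propose---from uniform $L^1$-equicontinuity of translations of $p_n$ to a uniform small-ball bound $\sup_n\rho_n(z)\to 0$---is false in general: take $p_n(x)=\tfrac{1}{2\pi}(1+\cos nx)$ on $[0,2\pi]$, which satisfies $\rho_n(z)\le z/\pi$ uniformly in $n$, yet $\|p_n(\cdot-\pi/n)-p_n\|_{L^1}=4/\pi$ for every $n$. What you actually need is $L^1$-relative compactness of $\{p_n\}$ (Riesz--Kolmogorov equicontinuity under translation), and the concentration function does not deliver it. Moreover, even the heuristic for the small-ball bound is shaky: the typical size of $f_n''$ is $n^2$, not $n^{3/2}$, and controlling \emph{all} $\sim n$ critical points of $f_n$ simultaneously, for all large $n$, $\mathbb{P}$-a.s., is an anti-concentration problem of difficulty comparable to the theorem itself.

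The paper sidesteps this entirely by a rescaling: writing $X\stackrel{d}{=}\tfrac{2\pi}{n}I+\tfrac{X}{n}$ with $I$ uniform on $\{0,\dots,n-1\}$, it works with $g_{n,I}(x):=f_n\bigl(\tfrac{2\pi}{n}I+\tfrac{x}{n}\bigr)$. The crucial gain is that $g_{n,I}'$ and $g_{n,I}''$ have $\mathbb{P}$-a.s.\ uniformly bounded $L^2(\mathbb{P}_X)$ norms, unlike $f_n',f_n''$ which blow up with $n$. An integration by parts in the variable $x$ against the Wright--Fisher carr\'e du champ $\Gamma[\phi,\psi]=\phi'\psi'\,x(2\pi-x)$ then yields, for bounded continuous $\phi$ with antiderivative $\Phi$,
\[
\bigl|\mathbb{E}_X[\phi(f_n(X))]\bigr|\le \frac{C(\omega)}{\alpha}\,\|\Phi\|_\infty+\mathbb{E}_{I,X}\!\Bigl[\frac{\alpha}{\alpha+g_{n,I}'(X)^2\,X(2\pi-X)}\Bigr].
\]
The second term tends to $0$ as $\alpha\to 0$ after $n\to\infty$, because Salem--Zygmund applied to the \emph{derivative} shows $g_{n,I}'(X)$ has a non-degenerate Gaussian limit in law. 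A short general lemma then upgrades weak convergence to total variation. In effect, the rescaling replaces your hard uniform anti-concentration across $\sim n$ critical points by the soft non-degeneracy of a single bounded random variable $g_{n,I}'(X)$.
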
 

\noindent
The proof of Theorem \ref{theo.dtv} is the object of Section \ref{sec.tv} below.

\subsection{Application to almost sure nodal asymptotics} \label{sec.statezero}
Let us now describe how the extensions of Salem--Zygmund result stated above can be used to obtain almost sure results for the number of zeros of random trigonometric polynomials. 
Our main result is the following.
\begin{theo}\label{theo.as}Let us consider a random trigonometric polynomial \[
f_n(t) :=\frac{1}{\sqrt{n}} \sum_{k=1}^n a_k \cos(kt)+b_k \sin(kt), \quad t \in \mathbb R,
\]
where $(a_k)$ and $(b_k)$ are independent sequences of independent and identically distributed random variables with a symmetric distribution, with unit variance and a moment of order four.
Then, $\mathbb P$ almost surely, we have as $n$ goes to infinity
\[
\lim_{n \to +\infty} \frac{\mathcal N(f_n, [0,2\pi]) }{n} = \frac{2}{\sqrt{3}}.
\]
and more generally for any interval $[a,b] \subset [0,2\pi]$
\[
\lim_{n \to +\infty} \frac{\mathcal N(f_n, [a,b]) }{n} = \frac{b-a}{\pi\sqrt{3}}.
\]
\end{theo}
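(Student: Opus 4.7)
The target constant $1/(\pi\sqrt{3})$ is the Kac--Rice zero density of the stationary Gaussian process $g_\infty$ with $\sin_c$ covariance function: indeed $\mathrm{Var}(g_\infty(0)) = \sin_c(0) = 1$ and $\mathrm{Var}(g_\infty'(0)) = -\sin_c''(0) = 1/3$, so Kac--Rice gives $\mathbb{E}[\mathcal{N}(g_\infty,[0,T])] = T \cdot \pi^{-1}\sqrt{1/3}$. The strategy is to express the global count $\mathcal{N}(f_n,[a,b])/n$ as an average under $\mathbb{P}_X$ of the local count of the rescaled process $g_n(s) := f_n(X + s/n)$, which converges in law to $g_\infty$ by the functional Salem--Zygmund Theorem~\ref{theo.functional}. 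Concretely, fix $T > 0$. For $X$ uniform on $[0, 2\pi]$, the $2\pi$-periodicity of $f_n$ together with a direct Fubini computation (each zero of $f_n$ is covered by windows $[X, X+T/n]$ of total length $T/n$) yields the \emph{exact} identity
\[
\frac{\mathcal{N}(f_n, [0, 2\pi])}{n} = \frac{2\pi}{T}\, \mathbb{E}_X\bigl[\mathcal{N}(g_n, [0, T])\bigr].
\]
For a general interval $[a, b]$, one takes $X$ uniform on $[a, b]$ and obtains an analogous identity, up to a boundary correction involving only the number of zeros of $f_n$ in the two short windows of length $T/n$ near $a$ and $b$.

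The core step is then to show that $\mathbb{P}$-a.s., $\mathbb{E}_X[\mathcal{N}(g_n, [0, T])] \to T/(\pi\sqrt{3})$. By Theorem~\ref{theo.functional}, $\mathbb{P}$-a.s., $g_n \to g_\infty$ in law in $\mathcal{C}^1([0,T])$ under $\mathbb{P}_X$. Since $g_\infty$ is almost surely free of critical zeros on $[0, T]$, the zero-counting functional is continuous at $g_\infty$ for the $\mathcal{C}^1$ topology, and the continuous mapping theorem yields convergence in law of $\mathcal{N}(g_n, [0, T])$ to $\mathcal{N}(g_\infty, [0, T])$ under $\mathbb{P}_X$. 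To upgrade this to convergence in $\mathbb{L}^1(\mathbb{P}_X)$, one must establish $\mathbb{P}$-a.s.\ uniform integrability of $\{\mathcal{N}(g_n, [0, T])\}_n$ under $\mathbb{P}_X$. The appropriate tool is a Jensen-type estimate applied to the holomorphic extension of $f_n$,
\[
\mathcal{N}(g_n, [0, T]) \leq C(T)\bigl[\log \|f_n^{\mathbb{C}}\|_{D_n} - \log |g_n(s_0)|\bigr]
\]
for some fixed reference point $s_0 \in [0, T]$ and appropriate disc $D_n$. The upper term is $O(\log n)$ almost surely from standard coefficient bounds, while the lower term is controlled via a $\log$-integrability estimate for $f_n(X)$ under $\mathbb{P}_X$, for which the recent results of \cite{nishry} are indispensable. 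Once uniform integrability is secured, $\mathbb{E}_X[\mathcal{N}(g_n, [0, T])]$ converges $\mathbb{P}$-a.s.\ to the Kac--Rice mean $T/(\pi\sqrt{3})$.

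For $[a, b] = [0, 2\pi]$ the exact Fubini identity immediately gives $\mathcal{N}(f_n, [0, 2\pi])/n \to 2/\sqrt{3}$, $\mathbb{P}$-a.s. For a general interval $[a, b]$, the boundary correction is controlled by the same Jensen-type bound applied to the short windows near $a$ and $b$, yielding an a.s.\ bound $\mathcal{N}(f_n, [a, a+T/n]) = O(T\log n)$ and hence a correction of order $O(T^2 \log n / n) = o_n(1)$ for fixed $T$; moreover, the functional Salem--Zygmund result must now be invoked for $X$ uniform on $[a, b]$, which can be obtained by combining the strategy of Theorem~\ref{SZ-gene} (applicable since the uniform law on $[a, b]$ has Fourier coefficients of order $1/|k|$) with a functional extension of its proof along the lines of Theorem~\ref{theo.functional}. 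The main obstacle throughout is the uniform integrability in the second paragraph: the delicate $\log$-integrability of $f_n$ for non-Gaussian coefficients is precisely the technical point where the results of \cite{nishry} are crucial, while for Gaussian coefficients elementary arguments suffice, as noted in the introduction.
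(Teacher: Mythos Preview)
Your overall architecture---the Fubini identity $\mathcal{N}(f_n,[0,2\pi])/n=\mathbb{E}_X[\mathcal{N}(g_n,[0,2\pi])]$, the functional CLT of Theorem~\ref{theo.functional}, and the continuous mapping theorem for the zero count---matches the paper's Sections~\ref{sec.formula}--\ref{sec.TCLfunc} exactly. The genuine gap is in the uniform integrability step, where you are missing a key ingredient.

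You invoke only \cite{nishry} for the $\log$-integrability of $f_n(X)$. But that estimate controls $\mathbb{E}\bigl[\mathbb{E}_X[|\log|f_n(X)||^p]\bigr]$ (expectation also over the coefficients), and passing to an almost-sure-in-$\omega$ statement via Borel--Cantelli yields only $\mathbb{E}_X[|\log|f_n(X)||^p]=O(n^\theta)$ $\mathbb{P}$-a.s.\ for every $\theta>0$ (this is Lemma~\ref{lem.croisspoly}), \emph{not} a bound uniform in $n$. Plugged into your Jensen estimate---whose upper term you moreover take pointwise as $O(\log n)$, which already prevents any uniform moment bound---this gives at best $\mathbb{E}_X[\mathcal{N}(g_n,[0,T])^{1+\varepsilon}]=O(n^\theta)$, which does not yield uniform integrability.

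The paper fills this gap in two places you do not mention. First, the \emph{quantitative} Salem--Zygmund Theorem~\ref{Theo-Salem-quanti} and Corollary~\ref{speed-cv-ps} are used in Lemma~\ref{lem.borne} to bootstrap the \cite{nishry} bound to the genuinely uniform $\sup_n\mathbb{E}_X[|\log|f_n(X)||^p]<\infty$: one compares $|\log|f_n(X)||^p$ with a smooth $\mathcal{C}^3$ truncation and controls the small-ball probability $\mathbb{P}_X(|f_n(X)|<1/M)$ via the explicit rate $d_{\mathcal{C}^3}^X(f_n(X),G)=O(n^{-\beta})$. Second, instead of Jensen, the paper uses iterated Rolle (Proposition~\ref{pro.moment}): $\mathcal{N}(g_n,[0,2\pi])>s$ forces $|g_n(0)|\le(2\pi)^{\lfloor s\rfloor}\|g_n^{(\lfloor s\rfloor)}\|_\infty/\lfloor s\rfloor!$, and since $\mathbb{E}_X[\|g_n^{(\ell)}\|_2^2]$ is bounded uniformly in $n$ and $\ell$ (a consequence of the rescaling $g_n(t)=f_n(X+t/n)$), the factorial decay combined with the uniform log-moment bound gives $\sup_n\mathbb{E}_X[\mathcal{N}(g_n,[0,2\pi])^p]<\infty$ for all $p$. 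The quantitative Salem--Zygmund theorem is therefore not an optional refinement but the mechanism that makes the almost-sure nodal asymptotics go through; your proposal bypasses it entirely.
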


The proof of Theorem  \ref{theo.as} is the object of the whole Section \ref{sec.applizero}. The starting point of the proof is a simple representation of the number of zeros $\mathcal N(f_n, [0,2\pi])$ as an expectation under $\mathbb E_X$, where $X$ is an independent uniform variable, see Section \ref{sec.formula}. Then, the two main ingredients of the proof are Theorem \ref{Theo-Salem-quanti} and Theorem \ref{theo.functional} stated above. Associated with some logarithmic integrability estimates, see Section \ref{sec.logint}, they allow us to deduce some moment estimates in Section \ref{sec.moments}, and to conclude in Section \ref{sec.conclu}.
The next synthetic scheme gives the architecture of the proof.
\begin{figure}[ht]
{\small 
\scalebox{0.715}{
 \begingroup%
  \makeatletter%
  \providecommand\color[2][]{%
    \errmessage{(Inkscape) Color is used for the text in Inkscape, but the package 'color.sty' is not loaded}%
    \renewcommand\color[2][]{}%
  }%
  \providecommand\transparent[1]{%
    \errmessage{(Inkscape) Transparency is used (non-zero) for the text in Inkscape, but the package 'transparent.sty' is not loaded}%
    \renewcommand\transparent[1]{}%
  }%
  \providecommand\rotatebox[2]{#2}%
  \ifx\svgwidth\undefined%
    \setlength{\unitlength}{579.55644531bp}%
    \ifx\svgscale\undefined%
      \relax%
    \else%
      \setlength{\unitlength}{\unitlength * \real{\svgscale}}%
    \fi%
  \else%
    \setlength{\unitlength}{\svgwidth}%
  \fi%
  \global\let\svgwidth\undefined%
  \global\let\svgscale\undefined%
  \makeatother%
  \begin{picture}(1,0.5943816)%
    \put(0.03297963,0.42620222){\color[rgb]{0,0,0}\makebox(0,0)[lb]{\smash{$(g_n(t))_{t \in [0,2\pi]} \xrightarrow[C^1 \; \mathrm{topo.}, \; \mathbb P \ a.s.]{\mathrm{law, \; under} \; \mathbb P_X} (g_{\infty}(t))_{t \in [0, 2\pi]}$\\ }}}%
    \put(0.42063702,0.51807596){\color[rgb]{0,0,0}\makebox(0,0)[lb]{\smash{$\mathbb E\left[ \mathbb E_X \left[ \left| \log(|f_n(X)| \right|^p \right] \right]<\infty$ }}}%
    \put(0.41909085,0.40651523){\color[rgb]{0,0,0}\makebox(0,0)[lb]{\smash{$\mathbb E_X \left[ \left| \log(|f_n(X)| \right|^p \right] =O\left(n^{\theta}\right)$ }}}%
    \put(0,0){\includegraphics[width=\unitlength,page=1]{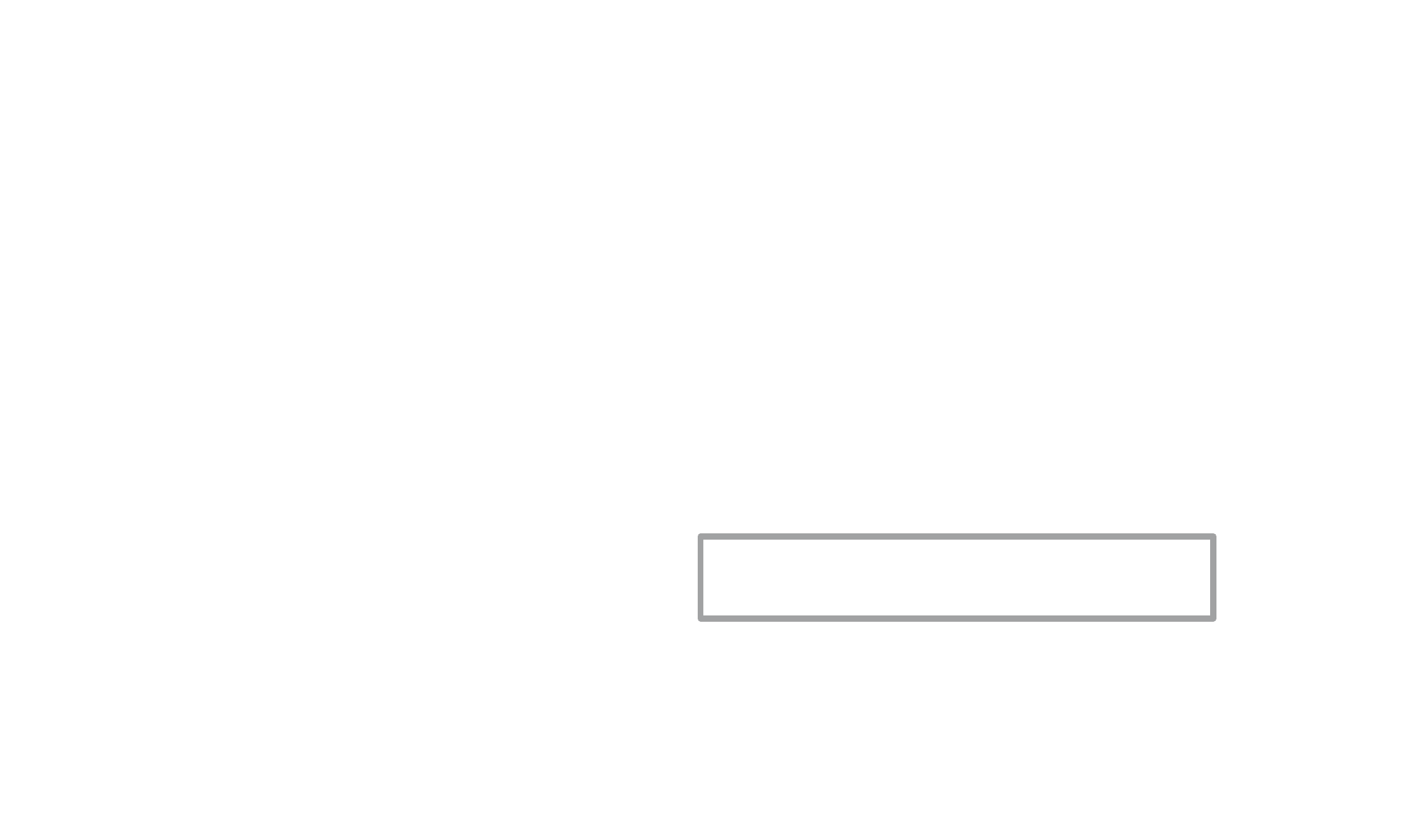}}%
    \put(0.38159929,0.58599156){\color[rgb]{0,0,0}\makebox(0,0)[lb]{\smash{\textcolor{blue}{Logarithmic integrability, Section \ref{sec.logint}}}}}%
    \put(0,0){\includegraphics[width=\unitlength,page=2]{diagramme.pdf}}%
    \put(0.01087241,0.47950618){\color[rgb]{0,0,0}\makebox(0,0)[lb]{\smash{\textcolor{blue}{Functional CLT, Theorem \ref{theo.functional} and Section \ref{sec.TCLfunc}}}}}%
    \put(0.67295512,0.4824641){\color[rgb]{0,0,0}\makebox(0,0)[lb]{\smash{\textcolor{blue}{Quant. estimates., Corollary \ref{speed-cv-ps}}}}}%
    \put(0.10601907,0.22906834){\color[rgb]{0,0,0}\makebox(0,0)[lb]{\smash{\textcolor{blue}{Probabilistic representation, Section \ref{sec.formula}}}}}%
    \put(0,0){\includegraphics[width=\unitlength,page=3]{diagramme.pdf}}%
    \put(0.64288288,0.1250479){\color[rgb]{0,0,0}\makebox(0,0)[lb]{\smash{\textcolor{blue}{Moment estimates, Section \ref{sec.moments}}}}}%
    \put(0.43829291,0.01659061){\color[rgb]{0,0,0}\makebox(0,0)[lb]{\smash{\textcolor{blue}{Conclusion, Section \ref{sec.conclu}}}}}%
    \put(0,0){\includegraphics[width=\unitlength,page=4]{diagramme.pdf}}%
    \put(0.67819333,0.297152){\color[rgb]{0,0,0}\makebox(0,0)[b]{\smash{$\sup_{n \geq 1}\mathbb E_X \left[ \mathcal N(g_n,[0,2\pi])^p \right]<\infty$}}}%
    \put(0.54794372,0.32117575){\color[rgb]{0,0,0}\makebox(0,0)[lb]{\smash{$\forall p\geq1, \; \mathbb P \, a.s.$}}}%
    \put(0.41502142,0.54610412){\color[rgb]{0,0,0}\makebox(0,0)[lb]{\smash{$\forall p>1$}}}%
    \put(0.41595783,0.43816354){\color[rgb]{0,0,0}\makebox(0,0)[lb]{\smash{$\forall p\geq 1, \; \forall \theta>0, \; \mathbb P \; a.s.$}}}%
    \put(0.50824815,0.19463152){\color[rgb]{0,0,0}\makebox(0,0)[lb]{\smash{$\forall p \geq 1, \; \mathbb P \, a.s.$}}}%
    \put(0.67281551,0.1686213){\color[rgb]{0,0,0}\makebox(0,0)[b]{\smash{$\mathbb E_X \left[ \mathcal N(g_n,[0,2\pi])^p \right] \to \mathbb E_X \left[  \mathcal N(g_{\infty}, [0,2\pi])^p\right] $}}}%
    \put(0.25207855,0.18035386){\color[rgb]{0,0,0}\makebox(0,0)[b]{\smash{$\frac{\mathcal N(f_n,[0,2\pi])}{n}=\mathbb E_X \left[  \mathcal N(g_{n}, [0,2\pi])\right] $}}}%
    \put(0.44286485,0.06943159){\color[rgb]{0,0,0}\makebox(0,0)[b]{\smash{$\frac{\mathcal N(f_n,[0,2\pi])}{n}\xrightarrow[\mathbb P\; a.s.]{} \mathbb E_X \left[  \mathcal N(g_{\infty}, [0,2\pi])\right]=\frac{2}{\sqrt{3}}$}}}%
    \put(0.03839451,0.31104271){\color[rgb]{0,0,0}\makebox(0,0)[lb]{\smash{$\mathcal N(g_n, [0,2\pi])\xrightarrow[\mathbb P \; a.s.]{\mathrm{law, \; under}\; \mathbb P_X} \mathcal N(g_{\infty}, [0,2\pi])$}}}%
    \put(0,0){\includegraphics[width=\unitlength,page=5]{diagramme.pdf}}%
    \put(0.70945463,0.42338558){\color[rgb]{0,0,0}\makebox(0,0)[lb]{\smash{$d_{C^3}^X(f_n(X),G)\stackrel{\mathbb P \, a.s.}{=}O(n^{-\alpha})$}}}%
    \put(0,0){\includegraphics[width=\unitlength,page=6]{diagramme.pdf}}%
  \end{picture}%
\endgroup%
}}
\caption{Synthetic plan of the proof of Theorem \ref{theo.as}}
\end{figure}

\section{Proofs of the variations on Salem--Zygmund Theorem}\label{sec.proof}

In this section, we give the proofs of the variations on Salem--Zygmund Theorem, namely our main Theorems 2 to 5 stated in Section \ref{sec.intro.SZ} above.

\subsection{A quantitative Salem--Zygmund Theorem via Stein's method} \label{sec.quanti}
We use the powerful method of Stein to provide a proof of Theorem \ref{Theo-Salem-quanti}, i.e. to quantify the rate of convergence for some appropriate metric in the Salem--Zygmund central limit Theorem. This quantitative bound will play a central role in the proof of the almost sure asymptotics of the number of real zeros of random trigonometric polynomials.

\begin{proof}[Proof of Theorem \ref{Theo-Salem-quanti}.]
The proof will be divided into several steps, directly inspired from the celebrate proof of the Central Limit Theorem which is due to Charles Stein. \par

\medskip
\noindent
\underline{Step 1: \textit{using the Stein equation}}
\medskip

Let $h\in\mathcal{C}^3_b(\R)$ which is centered with respect to the Gaussian distribution and which satisfies $\max\left(\|h\|_\infty,\|h'\|_\infty,\|h''\|_\infty,\|h'''\|_\infty\right)\le 1$. The Stein equation is

\begin{equation}\label{Stein-eq}
\forall x\in\R,\,\phi'(x)-x\phi(x)=h(x).
\end{equation}

The following Theorem is at the heart of the Stein methodology for Gaussian approximation, see Chapter 2.2 of \cite{chen}. 
\begin{theo}\label{Stein}
Let $h\in\mathcal{C}^1_b(\R)$ such that $\max\left(\|h\|_\infty,\|h'\|_\infty\right)\le1$, then there exists a unique solution to \eqref{Stein-eq} which satisfies:
\begin{equation}\label{Stein-bounds}
\|\phi_h\|_\infty\le 2,\,\|\phi_h'\|_\infty\le 4,\,\|\phi_h''\|_\infty\le 2.
\end{equation}
\end{theo}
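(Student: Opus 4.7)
The approach is to treat \eqref{Stein-eq} as a first-order linear ODE, produce the unique bounded solution in closed form, then bound $\phi_h$, $\phi_h'$, $\phi_h''$ by combining Mills-ratio estimates with the ODE itself. First, multiplying \eqref{Stein-eq} by the integrating factor $e^{-x^2/2}$ turns the left-hand side into $\tfrac{d}{dx}[e^{-x^2/2}\phi(x)]$, so integration gives the one-parameter family
\[
\phi(x) = e^{x^2/2}\left(C + \int_{-\infty}^x h(y)\,e^{-y^2/2}\,dy\right), \qquad C \in \R.
\]
Since $h$ is implicitly centered against the Gaussian density in this setup, $\int_{-\infty}^{+\infty} h(y)\,e^{-y^2/2}\,dy = 0$, so $C = 0$ is the only value keeping $\phi$ bounded as $|x|\to\infty$. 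This establishes both existence and uniqueness of $\phi_h$, which admits the two symmetric representations $\phi_h(x)=e^{x^2/2}\int_{-\infty}^x h(y)\,e^{-y^2/2}\,dy = -e^{x^2/2}\int_{x}^{+\infty} h(y)\,e^{-y^2/2}\,dy$.

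For the uniform norm of $\phi_h$, choosing whichever of the two formulas has the smaller integration domain according to the sign of $x$ gives $|\phi_h(x)|\le \|h\|_\infty\, r(|x|)$, where $r(u):=e^{u^2/2}\int_u^{+\infty} e^{-y^2/2}\,dy$ is the Mills function. A standard monotonicity argument shows $r$ is decreasing on $[0,\infty)$ with maximum $r(0)=\sqrt{\pi/2}<2$, whence $\|\phi_h\|_\infty\le 2\|h\|_\infty$. For $\phi_h'$ I would simply use the ODE itself, $\phi_h'(x) = h(x) + x\phi_h(x)$, together with the classical bound $|u|\int_{|u|}^{+\infty} e^{-y^2/2}\,dy \le \int_{|u|}^{+\infty} y\,e^{-y^2/2}\,dy = e^{-u^2/2}$, i.e. $|x|\,r(|x|)\le 1$, which yields $|x\phi_h(x)|\le\|h\|_\infty$ and hence a bound on $\|\phi_h'\|_\infty$ comfortably below $4$.

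The genuinely delicate estimate is the one on $\phi_h''$. Differentiating the ODE once more gives $\phi_h''(x) = h'(x) + (1+x^2)\phi_h(x) + x\,h(x)$, and the polynomial growth in $(1+x^2)\phi_h(x) + xh(x)$ cannot be controlled by Mills ratios applied to the raw integral representation. The trick is to exploit the $\mathcal{C}^1$ regularity of $h$: either write $h(y) = h(x) + \int_x^y h'(s)\,ds$ inside the integral defining $\phi_h$ and then integrate by parts against the Gaussian weight, or equivalently use the Ornstein--Uhlenbeck representation $\phi_h(x) = -\int_0^{+\infty}(P_t h)(x)\,dt$ together with the Mehler derivative identity $(P_t h)'(x) = e^{-t}(P_t h')(x)$. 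Either device lets one rewrite $\phi_h''$ as a weighted average involving only $h'$, producing $\|\phi_h''\|_\infty\le 2\|h'\|_\infty$. The principal obstacle is exactly this absorption of the growing coefficients $(1+x^2)$ and $x$: the Lipschitz hypothesis on $h$ has to enter in an essential, non-pointwise way, while all other bounds are standard linear-ODE manipulations.
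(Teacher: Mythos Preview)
The paper does not actually prove this theorem: it is stated as a classical result and attributed to Chapter 2.2 of \cite{chen}, with no argument given beyond that citation. Your proposal, by contrast, supplies a genuine proof sketch, and it is essentially the standard one found in that reference (Lemma 2.4 of Chen--Goldstein--Shao). The integrating-factor construction of the bounded solution, the Mills-ratio bounds for $\|\phi_h\|_\infty$ and $\|x\phi_h\|_\infty$, and the identification of the $\phi_h''$ estimate as the place where the $\mathcal{C}^1$ hypothesis on $h$ must enter non-trivially are all correct and match the textbook development. Two minor comments: the centering hypothesis on $h$ is indeed imposed in the surrounding text of the paper (just before the Stein equation is introduced), so your ``implicitly centered'' remark is justified; and your bounds on $\|\phi_h\|_\infty$ and $\|\phi_h'\|_\infty$ are in fact sharper than the constants $2$ and $4$ claimed, which is consistent with the literature---the paper's constants are deliberately loose.
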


At the end of the proof, we shall need the following technical corollary which proceeds directly from Theorem \ref{Stein}.

\begin{coro}\label{Stein-bounds-technical}
Under the stronger assumption $\max\left(\|h\|_\infty,\|h'\|_\infty,\|h''\|_\infty,\|h'''\|_\infty\right) \leq 1$ one gets
\begin{equation}\label{tech-stein}
\|x\phi_h\|_\infty\le 5 \quad \&\quad \|\phi^{(3)}\|_\infty \le 18 \quad \& \quad \|x \phi_h^{(3)}\|_\infty \le 43.
\end{equation}
\end{coro}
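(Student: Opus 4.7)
The first inequality is essentially free: rewriting the Stein equation \eqref{Stein-eq} as $x\phi_h(x) = \phi_h'(x)-h(x)$ and invoking the bounds of Theorem~\ref{Stein} together with $\|h\|_\infty\le 1$ yields $\|x\phi_h\|_\infty \le \|\phi_h'\|_\infty + \|h\|_\infty \le 4+1=5$.

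For the two remaining estimates the plan is to differentiate \eqref{Stein-eq} and then to bootstrap the resulting identities with the integral representation of the Stein solution together with the Mill's ratio inequality $x\,e^{x^2/2}\int_x^{+\infty} e^{-t^2/2}\,dt \le 1$ valid for $x>0$ (and its symmetric counterpart for $x<0$). Differentiating \eqref{Stein-eq} twice yields
\[
\phi_h^{(3)}(x) = 2\phi_h'(x) + x\phi_h''(x) + h''(x),
\]
so that controlling $\|\phi_h^{(3)}\|_\infty$ reduces to controlling $\|x\phi_h''\|_\infty$. The key observation is that $\phi_h''$ is itself a bounded solution of the Stein-type equation $(\phi_h'')'-x\phi_h'' = 2\phi_h'+h''$. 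Since $\phi_h''$ is known to be bounded by Theorem~\ref{Stein}, Gaussian integration by parts automatically forces the right-hand side $g:=2\phi_h'+h''$ to satisfy $\mathbb{E}[g(Z)]=0$ for $Z\sim\mathcal N(0,1)$, so that $\phi_h''$ must coincide with the explicit formula $-e^{x^2/2}\int_x^{+\infty} g(t)\,e^{-t^2/2}\,dt$ for $x>0$. Mill's ratio then gives $\|x\phi_h''\|_\infty \le \|g\|_\infty \le 2\|\phi_h'\|_\infty+\|h''\|_\infty \le 9$, and reinjecting produces $\|\phi_h^{(3)}\|_\infty \le 2\|\phi_h'\|_\infty+9+\|h''\|_\infty \le 18$.

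For $\|x\phi_h^{(3)}\|_\infty\le 43$ I would iterate the same scheme one level higher. Differentiating \eqref{Stein-eq} three times exhibits $\phi_h^{(3)}$ as the bounded solution of the Stein-type equation $(\phi_h^{(3)})'-x\phi_h^{(3)} = 3\phi_h''+h^{(3)}$, whose right-hand side is again Gaussian-centered for the same automatic reason, so the integral representation and Mill's ratio argument apply verbatim. Alternatively, multiplying the identity of the previous step by $x$ gives $x\phi_h^{(3)} = 2x\phi_h'+x^2\phi_h''+xh''$; the quantity $\|x\phi_h'\|_\infty$ is controlled via the identity $x\phi_h'=\phi_h''-\phi_h-h'$, and the \emph{a priori} unbounded term $x^2\phi_h''$ is estimated by performing one integration by parts in the integral representation of $\phi_h''$, which produces exactly the cancellation needed with the $xh''$ contribution so that a bounded remainder subsists. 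Either route, combined with the already established bounds and careful bookkeeping of constants, delivers the announced estimate.

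The only non-automatic step — and hence the main conceptual obstacle — is the verification that the right-hand side of each derived Stein equation is Gaussian-centered; but this is automatic once one knows that the primitive $\phi_h^{(k)}$ is bounded, since Gaussian integration by parts then forces $\mathbb{E}[(\phi_h^{(k)})'(Z)-Z\phi_h^{(k)}(Z)]=0$. The numerical constants stated (in particular $43$) are visibly loose, but they are entirely sufficient for the subsequent use of Corollary~\ref{Stein-bounds-technical} in the Stein-method proof of Theorem~\ref{Theo-Salem-quanti}.
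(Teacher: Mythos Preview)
Your proof is correct and follows the same structural idea as the paper: differentiate the Stein equation to exhibit $\phi_h''$ (and then $\phi_h^{(3)}$) as the bounded solution of a new Stein equation with right-hand side $g=2\phi_h'+h''$ (resp.\ $3\phi_h''+h'''$), and bootstrap. The difference lies only in how the Stein bounds are then extracted. The paper simply re-invokes Theorem~\ref{Stein} as a black box on the derived equation: since $\max(\|g\|_\infty,\|g'\|_\infty)\le 9$, rescaling yields $\|\phi_h^{(3)}\|_\infty\le 18$ and $\|\phi_h^{(4)}\|_\infty\le 36$, and then the thrice-differentiated identity $\phi_h^{(4)}-x\phi_h^{(3)}=h'''+3\phi_h''$ is rearranged to give $\|x\phi_h^{(3)}\|_\infty\le 36+1+6=43$. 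You instead unpack the black box via the integral representation and Mill's ratio, which is precisely what underlies Theorem~\ref{Stein}; your ``Route~1'' is then exactly the paper's last step, except that Mill's ratio gives you directly $\|x\phi_h^{(3)}\|_\infty\le \|3\phi_h''+h'''\|_\infty\le 7$, a sharper constant you did not bother to record. The paper's version is shorter (one citation instead of one computation); yours is more self-contained and requires the extra observation that the right-hand sides are Gaussian-centered, which you handle correctly by the automatic argument from boundedness. Your alternative ``Route~2'' involving $x^2\phi_h''$ is sketched too loosely to be convincing as stated, but it is superfluous since Route~1 is already complete.
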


\begin{proof}[Proof of the Corollary \ref{Stein-bounds-technical}]
Taking two derivatives of the equation \eqref{Stein-eq} gives the equation $\phi_h^{(3)}-x\phi_h''=h''+2\phi_h'$ which is another Stein equation with left hand side $h''+2\phi_h'$. Relying on \eqref{Stein-bounds} and the assumptions on $h$ we claim that $\max(\|h''+2\phi_h'\|_\infty,\|h''+2\phi_h''\|_\infty)\le 9$. This ensures, again by Theorem \ref{Stein} , that $\|\phi_h^{(3)}\|_\infty\le 18$ and $\|\phi_h^{(4)}\|_\infty\le 36$. Finally, taking again a derivative of the Stein equation leads to $\phi_h^{(4)}-x\phi_h^{(3)}=h'''+3\phi^{(2)}$ and the previous bounds guarantee that $\|x\phi_h^{(3)}\|_\infty\le 43.$
\end{proof}

We shall denote by $\mathcal{E}$ the set of functions which satisfy the conditions \eqref{Stein-bounds} and \eqref{tech-stein}.
Based on the combination of \eqref{Stein-eq},\eqref{Stein-bounds} and \eqref{tech-stein}, any function $h$ that is centered for the standard Gaussian distribution and whose three first derivatives are bounded by one can be written $\phi_h'-x \phi_h$ for some $\phi\in\mathcal{E}$. We have thus

\begin{equation}\label{Stein-eq1}
\begin{array}{ll}
  \displaystyle{d^{X}_{\mathcal{C}^3}\left(\frac{S_n(X)}{\sqrt{n}},G\right) }\le   \displaystyle{\sup_{\phi\in\mathcal{E}} \left|\mathbb{E}_X\left[\phi'\left(\frac{S_n(X)}{\sqrt{n}}\right)-\frac{S_n(X)}{\sqrt{n}}\phi\left(\frac{S_n(X)}{\sqrt{n}}\right)\right]\right|}.
 \end{array}
\end{equation}

We fix in the forthcoming estimates some function $\phi$ which obeys to the bounds given in Equation \eqref{Stein-bounds}. For convenience, we also set $R_k(X):=a_k \cos(k X)+b_k \sin( k X)$ and $S_n^k(X):=S_n(X)-R_k(X)$. Using Taylor expansions and denoting by $U$ an independent random variable uniformly distributed over $[0,1]$, we may write
\begin{eqnarray*}
&&\mathbb{E}_X\left[\frac{S_n(X)}{\sqrt{n}}\phi\left(\frac{S_n(X)}{\sqrt{n}}\right)\right]=\frac{1}{\sqrt{n}}\sum_{k=1}^n\mathbb{E}_X\left[R_k(X)~\phi\left(\frac{S_n(X)}{\sqrt{n}}\right)\right]\\
&=&\underbrace{\frac{1}{\sqrt{n}}\sum_{k=1}^n\mathbb{E}_X\left[R_k(X)~\phi\left(\frac{S_n^k(X)}{\sqrt{n}}\right)\right]}_{:=A_n}
+\underbrace{\frac{1}{n}\sum_{k=1}^n\mathbb{E}_X\left[R_k(X)^2~\phi'\left(\frac{S_n^k(X)}{\sqrt{n}}\right)\right]}_{:=B_n}\\
&+&\underbrace{\frac{1}{n\sqrt{n}}\sum_{k=1}^n\mathbb{E}_X\left[R_k(X)^3~\mathbb{E}_U\left[\phi''\left(U\frac{S_n^k(X)}{\sqrt{n}}+(1-U)\frac{S_n(X)}{\sqrt{n}}\right)\right]\right]}_{:=C_n}.
\end{eqnarray*}
We shall now estimate each of the terms $A_n$, $B_n$ and $C_n$ separately. First, the term $C_n$ can be bounded in the following way
\begin{eqnarray*}
|C_n|&=&\left|\frac{1}{n\sqrt{n}}\sum_{k=1}^n\mathbb{E}_X\left[R_k(X)^3~\mathbb{E}_U\left[\phi''\left(U\frac{S_n^k(X)}{\sqrt{n}}+(1-U)\frac{S_n(X)}{\sqrt{n}}\right)\right]\right]\right|\\
&\le&\frac{\|\phi''\|_\infty}{\sqrt{n}}~~\frac{1}{n}\sum_{k=1}^n \left(|a_k|+|b_k|\right)^3\\
& \leq& \frac{2}{n \sqrt{n}} ~~\sum_{k=1}^n \left(|a_k|+|b_k|\right)^3.
\end{eqnarray*}

\medskip
\noindent
\underline{Step 2: \textit{bounding the term $B_n$}}\par
\medskip
\noindent
We can write the following decomposition:
\[
\begin{array}{ll}
B_n & =\displaystyle{\frac{1}{n}\sum_{k=1}^n\mathbb{E}_X\left[R_k(X)^2~\phi'\left(\frac{S_n^k(X)}{\sqrt{n}}\right)\right]=\mathbb{E}_X\left[\frac{1}{n}\sum_{k=1}^n R_k(X)^2~\phi'\left(\frac{S_n(X)}{\sqrt{n}}\right)\right]}\\
\\
&\displaystyle{- \mathbb{E}_X\left[\frac{1}{n\sqrt{n}}\sum_{k=1}^n R_k(X)^3~\mathbb{E}_U\left[\phi''\left(U\frac{S_n(X)}{\sqrt{n}}+(1-U)\frac{S_n^k(X)}{\sqrt{n}}\right)\right]\right]}\\
\end{array}
\]
so that 
\[
\begin{array}{ll}
\displaystyle{B_n -\mathbb{E}_X\left[\phi'\left(\frac{S_n(X)}{\sqrt{n}}\right)\right]}  =
\displaystyle{\underbrace{\mathbb{E}_X\left[\left(\frac{1}{n}\sum_{k=1}^n R_k(X)^2-1\right)~\phi'\left(\frac{S_n(X)}{\sqrt{n}}\right)\right]}_{:=B_{n,1}}}\\
\\
\displaystyle{-\underbrace{\mathbb{E}_X\left[\frac{1}{n\sqrt{n}}\sum_{k=1}^n R_k(X)^3~\mathbb{E}_U\left[\phi''\left(U\frac{S_n(X)}{\sqrt{n}}+(1-U)\frac{S_n^k(X)}{\sqrt{n}}\right)\right]\right]}_{:=B_{n,2}}}.
\end{array}
\]
\noindent
Using Stein's bounds given in \eqref{Stein-bounds} we then get
\begin{eqnarray*}
|B_{n,2}|&\le& \frac{2}{n \sqrt{n}}~~\sum_{k=1}^n \left(|a_k|+|b_k|\right)^3,\\
|B_{n,1}|&\le& 4~\mathbb{E}_X\left[\left|\frac{1}{n}\sum_{k=1}^n R_k(X)^2-1\right|\right] \leq  4 \mathbb{E}_X\left[\left|\frac{1}{n}\sum_{k=1}^n a_k^2 \cos^2(k X)-\frac{1}{2}\right|\right]\\
&+&4 \mathbb{E}_X\left[\left|\frac{1}{n}\sum_{k=1}^n b_k^2 \sin^2(k X)-\frac{1}{2}\right|\right]+ 4 \mathbb{E}_X\left[\left|\frac{1}{n}\sum_{k=1}^n a_k b_k \cos(k X) \sin (k X)\right|\right].
\end{eqnarray*}
Then, using in the one hand the formula $\cos(2x)=2\cos^2(x)-1$ and Cauchy--Schwarz inequality on the other hand, we may infer
\begin{eqnarray*}
 \mathbb{E}_X\left[\left|\frac{1}{n}\sum_{k=1}^n a_k^2 \cos^2(k X)-\frac{1}{2}\right|\right]&\le& \frac{1}{2}\, \mathbb{E}_X\left[\left|\frac{1}{n}\sum_{k=1}^n a_k^2 \cos(2 k X)\right|\right]
 +\frac{1}{2}\left|\frac{1}{n}\sum_{k=1}^n \left(a_k^2 -1\right) \right|\\
&\le& \frac{1}{2} \sqrt{\frac{1}{2 n^2}\sum_{k=1}^n a_k^4}+\frac{1}{2}\left|\frac{1}{n}\sum_{k=1}^n \left(a_k^2 -1\right) \right|.
\end{eqnarray*}
Exactly in the same way, we obtain
\begin{eqnarray*}
\mathbb{E}_X\left[\left|\frac{1}{n}\sum_{k=1}^n b_k^2 \sin^2(k X)-\frac{1}{2}\right|\right]&\le&\frac{1}{2} \sqrt{\frac{1}{2 n^2}\sum_{k=1}^n b_k^4}+\frac{1}{2}\left|\frac{1}{n}\sum_{k=1}^n \left(b_k^2 -1\right) \right|,
\end{eqnarray*}
as well as
\begin{eqnarray*}
\mathbb{E}_X\left[\left|\frac{1}{n}\sum_{k=1}^n a_k b_k \cos(k X) \sin (k X)\right|\right]&\le& \frac{1}{2}\mathbb{E}_X\left[\left|\frac{1}{n}\sum_{k=1}^n a_k b_k \sin (2 k X)\right|\right]\\
&\le&\frac{1}{2} \sqrt{ \frac{1}{2 n^2}\sum_{k=1}^n a_k^2 b_k^2}.
\end{eqnarray*}
\noindent
Gathering all these bounds, we finally obtain
\begin{equation}\label{bound-B_n-1}
\left| B_n -\mathbb{E}_X\left[\phi'\left(\frac{S_n(X)}{\sqrt{n}}\right)\right]\right| \le \frac{\lambda_n}{\sqrt{n}}
\end{equation}
with 
\begin{equation}
\begin{array}{ll}\label{bound-B_n-2}
\lambda_n&:= \displaystyle{\frac{2}{n}\sum_{k=1}^n\left(|a_k|+|b_k|\right)^3 + 2 \sqrt{\frac{1}{2 n} \sum_{k=1}^n a_k^4}+2\left|\frac{1}{\sqrt{n}}\sum_{k=1}^n \left(a_k^2 -1\right) \right|}\\
&  \displaystyle{+2 \sqrt{\frac{1}{2 n}\sum_{k=1}^n b_k^4}+2\left|\frac{1}{\sqrt{n}}\sum_{k=1}^n \left(b_k^2 -1\right) \right|+ 2\sqrt{ \frac{1}{2 n}\sum_{k=1}^n a_k^2 b_k^2}}.
 \end{array}
\end{equation}
\medskip
\medskip
\underline{Step 3: \textit{bounding the term $A_n$ in the  particular case} $\phi(\cdot)=\exp\left(i \xi \cdot\right)$}\par
\medskip
Bounding the term $A_n$ term is arguably the most difficult part of the proof. Note that for the moment, the estimates obtained for $B_n$ and $C_n$ are almost sure with respect to $\mathbb P$. We were not able to give such an almost sure estimate for $A_n$, but it is possible to estimate its expectation under $\mathbb P$. In order to capture the correct order of convergence, we will first handle the case $\phi(\cdot)=\exp\left(i \xi \cdot\right)$ with $\xi\in\mathbb{R}$.
To this end, let us define 
\[
\tilde{A}_n(\xi):=\mathbb{E}\left(\left|\frac{1}{\sqrt{n}}\sum_{k=1}^n\mathbb{E}_X\left[R_k(X)\exp\left(i \xi  S_n^{k}(X)\right)\right]\right|^2\right).
\]
The next Lemma shows that $\tilde{A}_n(\xi)$ is of the order $1/n$. 

\begin{lem} For all $ \xi \in \mathbb{R}$, we have 
\begin{equation}\label{Antilde-final}
\tilde{A}_n(\xi)\le \left(13+|\mathbb E [a_1^3]|\right)\frac{|\xi|^4+|\xi|^3+|\xi|^2+1}{n}.
\end{equation}\label{lem.Antilde}
\end{lem}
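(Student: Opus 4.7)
My plan is to introduce an independent copy $Y$ of $X$ and use the resulting factorization to extract the $O(1/n)$ decay from the near-orthogonality of the trigonometric system. The first move is to write $|\mathbb{E}_X[Z]|^2 = \mathbb{E}_{X,Y}[Z(X)\overline{Z(Y)}]$ with $Y$ an independent copy of $X$, then apply Fubini to swap $\mathbb{E}$ and $\mathbb{E}_{X,Y}$. This reduces the estimate to
\[
n\,\tilde A_n(\xi) = \sum_{k,l=1}^n \mathbb{E}_{X,Y}\,\mathbb{E}\left[R_k(X) R_l(Y) e^{i\xi(S_n^k(X) - S_n^l(Y))}\right],
\]
so that for each fixed $(X,Y)$ I can factor the inner $\mathbb{E}$ using the independence of the family $(a_j, b_j)_{j=1}^n$.

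Isolating the factors attached to indices $j = k$, $j = l$ and $j \notin \{k,l\}$, the inner expectation equals $\cos(k(X-Y))\prod_{j\neq k}\gamma_j$ on the diagonal $k=l$, and $\alpha_k\beta_l\prod_{j\neq k,l}\gamma_j$ on the off-diagonal, where
\[
\gamma_j := \mathbb{E}[e^{i\xi(R_j(X)-R_j(Y))}],\quad \alpha_k := \mathbb{E}[R_k(X)e^{-i\xi R_k(Y)}],\quad \beta_l := \mathbb{E}[R_l(Y)e^{i\xi R_l(X)}],
\]
with $|\gamma_j|\leq 1$ in particular. Taylor expanding in $\xi$, and using $\mathbb{E}[R_k(X)] = 0$, $\mathbb{E}[R_k(X)R_k(Y)] = \cos(k(X-Y))$ together with $|e^{iu}-1-iu+u^2/2|\leq |u|^3/6$, one obtains
\[
\alpha_k = -i\xi\cos(k(X-Y)) + \xi^2 \cdot O(|\mathbb{E}[a_1^3]|) + O(|\xi|^3),
\]
with the cubic remainder controlled by $\mathbb{E}[a_1^4]$, and an analogous expansion for $\beta_l$. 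Products $\alpha_k\beta_l$ then expand as $\xi^2\cos(k(X-Y))\cos(l(X-Y))$ plus corrections of orders $\xi^3$ and $\xi^4$, which explains the polynomial dependence $|\xi|^2+|\xi|^3+|\xi|^4$ in the target bound.

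The mechanism giving the $1/n$ factor is the orthogonality relation $\mathbb{E}_{X,Y}[\cos(k(X-Y))\cos(l(X-Y))] = \tfrac{1}{2}\delta_{kl}$, which makes the leading $\xi^2\sum_{k \neq l}\cos(k(X-Y))\cos(l(X-Y))$ contribution vanish, up to the residuals coming from the $\prod\gamma_j$ factors. Using $|\prod_j\gamma_j|\leq 1$, Cauchy--Schwarz, and the identity $\mathbb{E}_{X,Y}\bigl[\bigl(\sum_{k=1}^n\cos(k(X-Y))\bigr)^2\bigr] = n/2$, I would then collect the remaining terms and combine diagonal and off-diagonal contributions to obtain a global bound of the form $C(a_1)\bigl(1+|\xi|^2+|\xi|^3+|\xi|^4\bigr)$, which once divided by $n$ yields the announced estimate.

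The main technical obstacle is the tracking of residuals: the factors $\gamma_j - 1$ can be of order $1$ when $|\xi|$ is large, and a naive bound such as $|\sum_k \cos(k(X-Y))\prod_{j\neq k}\gamma_j|\leq n$ already exceeds the target. The gain of $O(1/n)$ emerges only through the subtle cancellations between diagonal and off-diagonal contributions enforced by the orthogonality relations, so the real challenge is to pair each Taylor correction with the correct oscillatory factor and exhibit this cancellation uniformly in $\xi$, while keeping the explicit constant $13 + |\mathbb{E}[a_1^3]|$ under control.
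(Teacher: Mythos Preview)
Your setup is exactly right and matches the paper: introduce an independent copy $Y$, apply Fubini, factor over the independent blocks $(a_j,b_j)$, and split into diagonal and off-diagonal parts. Your Taylor expansions of $\alpha_k,\beta_l$ and the identification of the leading off-diagonal term $\xi^2\cos(k(X-Y))\cos(l(X-Y))$ are also correct. The gap is in the resolution of the obstacle you correctly flag in your last paragraph.

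First, the mechanism is \emph{not} a cancellation between the diagonal and off-diagonal pieces: in the paper both are bounded separately by $O(1/n)$. The trick you are missing is to replace $\prod_{j\neq k}\gamma_j$ (resp.\ $\prod_{j\neq k,l}\gamma_j$) by the \emph{full} product $\Gamma:=\prod_{j=1}^n\gamma_j=\mathbb{E}\bigl[e^{i\xi(S_n(X)-S_n(Y))}\bigr]$. A one-step Taylor bound on the inserted factor $\gamma_k$ (or $\gamma_k\gamma_l$) shows the error is $O(\xi^2/n)$ per term, so the total replacement cost is $O(\xi^2/n)$ for the diagonal and $O(\xi^4/n)$ for the off-diagonal. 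The crucial gain is that $\Gamma$ no longer depends on the summation indices $k,l$, so it factors out of the sums.

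Second, after this replacement the two pieces require different arguments. For the off-diagonal you are essentially there: with $\Gamma$ pulled out and $|\Gamma|\le 1$, the bound $\mathbb{E}_{X,Y}\bigl[(\sum_k\cos(k(X-Y)))^2\bigr]=n/2$ handles $\sum_{k\neq l}\cos(k(X-Y))\cos(l(X-Y))$. For the diagonal, however, Cauchy--Schwarz against $\cos(k(X-Y))$ only gives $O(1)$, not $O(1/n)$. The paper's key observation is that, once $\Gamma$ is in place, the diagonal sum becomes
\[
\frac{1}{n}\sum_{k=1}^n \mathbb{E}_{X,Y}\!\left[\cos(k(X-Y))\,e^{i\xi(S_n(X)-S_n(Y))}\right]
=\frac{1}{2n}\sum_{k=1}^n\Bigl(|\widehat{g}(k)|^2+|\widehat{g}(-k)|^2\Bigr),
\]
where $g(x)=e^{i\xi S_n(x)}$ and $\widehat{g}(k)$ are its Fourier coefficients; Bessel's inequality then gives the bound $1/(2n)$ uniformly in $\xi$. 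This Bessel step is the missing ingredient that your proposal does not supply, and without it the diagonal contribution cannot be controlled.
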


The proof of Lemma \ref{lem.Antilde} is quite technical, in order to facilitate the global reading of the paper, it is postponed in Section \ref{app.lem1} of the appendix.

\medskip
\noindent
\underline{Step 4: \textit{bounding the term $A_n$}}\par
\medskip
Let $\phi$ the unique solution of the Stein equation $\phi_h'-x\phi_h=h$ where it is assumed that $\max\left(\|h'\|_\infty,\|h''\|_\infty,\|h'''\|_\infty\right)\le 1$. By the Step 1 of the proof we know that $\phi\in\mathcal{E}$ which means that estimates \eqref{Stein-bounds} and \eqref{tech-stein} are fulfilled. In particular, using Plancherel Theorem we have
\[
\begin{array}{ll}
\displaystyle{\int_{\mathbb{R}} \left|\hat{\phi}(\xi)\right|^2 (1+|\xi|^3)^2 d\xi} &\le \displaystyle{2 \int_{\mathbb{R}} \left|\hat{\phi}(\xi)\right|^2 (1+|\xi|^6) d\xi}\\
\\
&=\displaystyle{ 4\pi \int_{\R} \left(|\phi(x)|^2+|\phi^{(3)}(x)|^2\right)dx}\\
\\
&\le  \displaystyle{4\pi\left( \|\phi\|_\infty+\|\phi^{(3)}\|_\infty^2\right)+ 4\pi \int_{|x|>1}\left(|\phi(x)|^2+|\phi^{(3)}(x)|^2\right)dx}\\
\\
&\displaystyle{\stackrel{\eqref{tech-stein}}{\le} 4\pi(4+324)+8\pi\int_{1}^\infty \left(\frac{43^2+5^2}{x^2}\right)dx }\\
\\
&= \displaystyle{4\pi(4+324+2\times 43^2+2\times 5^2)=16304 \pi}.
\end{array}
\]
Using Fourier inversion Theorem, we then get
\[
\begin{array}{ll}
\left|A_n\right| & \displaystyle{=\left|\frac{1}{\sqrt{n}}\sum_{k=1}^n\mathbb{E}_X\left[R_k(X)~\phi\left(\frac{S_n^k(X)}{\sqrt{n}}\right)\right]\right|}\\
&\le \displaystyle{ \frac{1}{2\pi} \int_{\R}|\hat{\phi}(\xi)| \left|\frac{1}{\sqrt{n}}\sum_{k=1}^n\mathbb{E}_X\left[R_k(X)~\exp\left(i \xi \frac{S_n^k(X)}{\sqrt{n}}\right)\right]\right| d\xi}\\
&\le \displaystyle{\frac{1}{2\pi}\int_{\R} |\hat{\phi}(\xi)| (1+|\xi|^3)\frac{1}{1+|\xi|^3}\left|\frac{1}{\sqrt{n}}\sum_{k=1}^n\mathbb{E}_X\left[R_k(X)~\exp\left(i \xi \frac{S_n^k(X)}{\sqrt{n}}\right)\right]\right| d\xi}.
\end{array}
\]
Taking the expectation with respect to $\mathbb{P}$, the Cauchy--Schwarz inequality associated with the above Lemma \ref{lem.Antilde} gives us
\[
\begin{array}{ll}
\E\left[|A_n|^2\right] & \displaystyle{\le \frac{1}{4\pi^2}\int_{\R}|\hat{\phi}(\xi)|^2 (1+|\xi|^3)^2 d\xi \int_{\R} \frac{\tilde{A}_n(\xi)}{(1+|\xi|^3)^2}  d\xi}\\
\\
&\le \displaystyle{ \frac{4076}{\pi}\int_{\R} \frac{1}{(1+|\xi|^3)^2} \left(13+|\mathbb E[a_1^3]|\right)\frac{|\xi|^4+|\xi|^3+|\xi|^2+1}{n} d\xi}\\
\\
&\le (13+|\mathbb E[a_1^3]|) \frac{20380}{n \pi},
\end{array}
\]
where we have used in the last estimate that
$$\int_{\R}\frac{|\xi|^4+|\xi|^3+|\xi|^2+1}{(1+|\xi|^3)^2}d\xi = \frac{2}{27}\left(9+10\sqrt{3}\pi\right)\approx 4.7 \le 5.$$

\medskip
\noindent
\underline{Step 5: \textit{synthesis}}\par
\medskip
The compilation of Step 1 and Step 2, give the following upper bound
\begin{equation}\label{upper-bound1}
\displaystyle{d^{X}_{\mathcal{C}^3}\left(\frac{S_n(X)}{\sqrt{n}},G\right) }\le  |A_n|+|C_n|+\frac{\lambda_n}{\sqrt{n}}.
\end{equation}
Taking the expectation with respect to $\mathbb{P}$ leads to
\[
\begin{array}{ll}
\displaystyle{\E\left[\displaystyle{d^{X}_{\mathcal{C}^3}\left(\frac{S_n(X)}{\sqrt{n}},G\right) }\right] }& \displaystyle{\le \E\left[|A_n|\right]+\E\left[|C_n|\right]+\frac{\E\left[\lambda_n\right]}{\sqrt{n}}}\\
\\
&\le \displaystyle{\underbrace{\sqrt{(13+|\mathbb E[a_1^3]|) \frac{20380}{n \pi}}}_{\text{Cauchy--Schwarz+end of Step 4}}+\underbrace{\frac{2}{\sqrt{n}}\E\left[\left(|a_1|+|b_1|^3\right)\right]}_{\text{End of Step 1}}}\\
\\
&+\displaystyle{ \underbrace{\left(8\sqrt{\mathbb E[a_1^4]}+\sqrt{2}+2\E\left[\left(|a_1|+|b_1|\right)^3\right]\right)\frac{1}{\sqrt{n}}}_{\text{Taking expectation in} \,\eqref{bound-B_n-2}}}.
\end{array}
\]
As $\sqrt{\frac{20380}{\pi}}\approx 80.54$ one can simplify a bit and get
\begin{equation}\label{upper-bound2}
\E\left[\displaystyle{d^{X}_{\mathcal{C}^3}\left(\frac{S_n(X)}{\sqrt{n}},G\right) }\right]\le C(a_1) \frac{1}{\sqrt{n}},
\end{equation}
with
$$C(a_1):=81\sqrt{13+|\mathbb E[a_1^3]|}+8\sqrt{\mathbb E[a_1^4]}+\sqrt{2}+8\E\left[|a_1|^3\right]+24 \E\left[|a_1|\right].$$
\end{proof} 
\noindent
Let us now detail how Corollary \ref{speed-cv-ps} can be deduced from Theorem \ref{Theo-Salem-quanti}.

\begin{proof}[Proof of Corollary \ref{speed-cv-ps}.] 
Let $\beta<\frac{1}{6}$. Based on the inequality \eqref{main-ineq-theo-quanti}, we obtain that

$$\sum_{n=1}^\infty n^{3\beta} \E\left[d^X_{\mathcal{C}^3}\left(f_{n^3}(X),G\right)\right]\le \sum_{n\ge 1}\frac{C(a_1)}{n^{\frac{3}{2}-3\beta}}<\infty,\; \; \text{since}\;\; \frac{3}{2}-3\beta>1.$$
Using Borel--Cantelli Lemma, $\mathbb{P}$ almost surely, there exists a constant $C(\omega)>0$, which may change from line to line, such that
$$d^X_{\mathcal{C}^3}\left(f_{n^3}(X),G\right)\le \frac{C}{n^{3\beta}}.$$
On the other hand, for any integer $m\ge 1$ one may find $n\ge 1$ such that $n^3\le m\le (n+1)^3$. 
Setting $\Delta_{m,n^3}:=\E_X\left[\left|f_m(X)-f_{n^3}(X)\right|^2\right]$, one then deduces that
\[
\begin{array}{ll}
\Delta_{m,n^3} &\le \displaystyle{2 \left(\sqrt{\frac{n^3}{m}}-1\right)^2 \underbrace{\E_X\left[f_{n^3}(X)^2\right]}_{=1}+\frac{2}{n^3}\E_X\left[\left|\sum_{k=n^3+1}^m a_k\cos(k X)+b_k\sin(k X)\right|^2\right]}\\
&\displaystyle{\le\underbrace{2 \left(\sqrt{\frac{n^3}{(n+1)^3}}-1\right)^2+2\frac{(n+1)^3-n^3}{n^3}}_{=O\left(\frac{1}{n}\right)} = O\left(\frac{1}{m^{\frac{1}{3}}}\right)}.
\end{array}
\]
Gathering the previous estimates provides the desired conclusion since
\[
\begin{array}{ll}
\displaystyle{d^X_{\mathcal{C}^3}\left(f_{m}(X),G\right) }& \displaystyle{\le d^X_{\mathcal{C}^3}\left(f_{n^3}(X),G\right)+d^X_{\mathcal{C}^3}\left(f_{m}(X),f_{n^3}\right)}\\
\\
&\le\displaystyle{ \frac{C}{n^{3\beta}}+\sqrt{\E_X\left[\left|f_m(X)-f_{n^3}(X)\right|^2\right]}}\\
\\
&\le\displaystyle{ \frac{C}{m^\beta}+O\left(\frac{1}{m^{\frac{1}{6}}}\right)=O\left(\frac{1}{m^{\beta}}\right).}
\end{array}
\]
\end{proof}

\subsection{Functional Central Limit Theorem}\label{sec.functional}
In this section, we give the proof of Theorem \ref{theo.functional}. The convergence in law of the sequence of processes $(g_n(t))_{t \in [0,2\pi]}$ classically splits into two parts: on the one hand the convergence of finite dimensional marginals, and on the other hand a suitable tightness argument associated with the $\mathcal C^1$ topology. 
Let us first establish the convergence of the finite marginals of the process $(g_n(t))_{t \in [0,2\pi]}$.
\begin{prop}\label{prop.marginal}
Almost surely with respect to the probability $\mathbb P$, as $n$ goes to infinity, the finite marginals of the process $(g_n(t))_{t \in [0,2\pi]}$ converge to the ones of a Gaussian process $(g_{\infty}(t)_{t \in [0,2\pi]}$ with $\sin_c$ covariance function, i.e.
\[
\mathbb E_X[ g_{\infty}(t) g_{\infty}(s)] = \frac{\sin(t-s)}{t-s}.
\]
\end{prop}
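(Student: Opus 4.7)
The plan is to apply the Cramér--Wold device. Fix $m \geq 1$, reals $\xi_1,\ldots,\xi_m$, and times $t_1,\ldots,t_m \in [0,2\pi]$, and set $Z_n := \sum_{j=1}^m \xi_j g_n(t_j)$. It suffices to show that $\mathbb{P}$-almost surely, $Z_n$ converges in law under $\mathbb{P}_X$ to a centered Gaussian with variance $\sigma^2 := \sum_{j,l=1}^m \xi_j\xi_l \frac{\sin(t_j-t_l)}{t_j-t_l}$. The starting observation is that, thanks to the angle-addition formulas, $Z_n$ itself can be rewritten as an honest random trigonometric polynomial in $X$:
\[
Z_n = \frac{1}{\sqrt{n}} \sum_{k=1}^n \alpha_k^{(n)} \cos(kX) + \beta_k^{(n)} \sin(kX),
\]
where $\alpha_k^{(n)} := a_k \sum_j \xi_j \cos(kt_j/n) + b_k \sum_j \xi_j \sin(kt_j/n)$, and $\beta_k^{(n)}$ is defined analogously. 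The pairs $(\alpha_k^{(n)},\beta_k^{(n)})$ remain independent in $k$, centered, and bounded by $(\sum_j|\xi_j|)(|a_k|+|b_k|)$, so their moments are uniformly controlled in $n$.

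The first concrete task is the conditional variance. Using the orthogonality of $(\cos(kX),\sin(kX))_k$ on $[0,2\pi]$ together with the identity $\cos A\cos B + \sin A\sin B = \cos(A-B)$, one obtains the exact formula
\[
\mathbb{E}_X[Z_n^2] = \frac{1}{2n}\sum_{k=1}^n (a_k^2+b_k^2) \sum_{j,l}\xi_j\xi_l \cos\bigl(k(t_j-t_l)/n\bigr).
\]
Splitting $(a_k^2+b_k^2) = 2 + ((a_k^2+b_k^2)-2)$, applying Kolmogorov's strong law to the centered part, and recognizing a Riemann sum in the deterministic part via $\frac{1}{n}\sum_{k=1}^n \cos(ku/n) \to \sin(u)/u$, one concludes that $\mathbb{E}_X[Z_n^2] \to \sigma^2$, $\mathbb{P}$-almost surely, with a polynomial rate.

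The core of the proof is the Gaussian limit of the conditional characteristic function $\mathbb{E}_X[e^{i\lambda Z_n}]$. My plan is to reproduce the Stein/Fourier argument of the proof of Theorem~\ref{Theo-Salem-quanti}, now applied to the polynomial $\lambda Z_n$. The whole machinery of Section~\ref{sec.quanti} only relies on three structural ingredients---independence across $k$, centered coefficients, and uniform control of fourth moments---all preserved by $(\alpha_k^{(n)},\beta_k^{(n)})$. This should yield an estimate of the form
\[
\mathbb{E}\!\left[\left|\mathbb{E}_X[e^{i\lambda Z_n}] - e^{-\lambda^2 \sigma_n^2/2}\right|^2\right] = O(1/n),
\]
with $\sigma_n^2 := \mathbb{E}_X[Z_n^2]$. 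A Borel--Cantelli extraction along a polynomially spaced subsequence, complemented by the $\mathbb{L}^2$-interpolation between consecutive indices used in the proof of Corollary~\ref{speed-cv-ps}, upgrades the convergence to $\mathbb{P}$-a.s., first on a countable dense set of $\lambda$ and then for every $\lambda \in \mathbb{R}$ by continuity.

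The hard part is this last step. The generalized coefficients $(\alpha_k^{(n)},\beta_k^{(n)})$ are not identically distributed in $k$, and the target variance $\sigma^2$ is itself a random quantity depending on $\omega$. One must therefore revisit the Stein computations of Section~\ref{sec.quanti} and verify that all moment and cross-moment bounds go through with constants uniform in $n$ and $k$. This is essentially bookkeeping: the trigonometric weights $\sum_j \xi_j \cos(kt_j/n)$ and $\sum_j \xi_j \sin(kt_j/n)$ are bounded in modulus by $\sum_j |\xi_j|$, so the resulting loss in the constants does not spoil the $1/\sqrt{n}$ rate inherited from Theorem~\ref{Theo-Salem-quanti}.
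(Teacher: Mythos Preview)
Your approach is sound but takes a different route from the paper. You absorb the $t_j$-dependence into new random coefficients $\alpha_k^{(n)}, \beta_k^{(n)}$ (keeping the standard basis $\cos(kX),\sin(kX)$) and propose to re-run the Stein machinery of Section~\ref{sec.quanti} on the resulting trigonometric polynomial. The paper does the opposite: it keeps the original coefficients $a_k,b_k$ and absorbs the $t_j$-dependence into modified deterministic basis functions $\alpha_{k,n}(X),\beta_{k,n}(X)$; it then uses the averaging trick directly---expand $\mathbb{E}\bigl[|\Psi_n(t,\lambda)-e^{-Q_n(t,\lambda)/2}|^2\bigr]$ by introducing an independent copy $Y$ of $X$---together with a Lindeberg-style Gaussian replacement of the $a_k,b_k$, which makes the inner $\mathbb{P}$-expectation an explicit exponential and bypasses Stein entirely. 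The only residual term is $e^{-Q_n}\mathbb{E}_{X,Y}[e^{\mathcal{E}_n}-1]$ for a short cosine sum $\mathcal{E}_n$ in $X-Y$, controlled by Cauchy--Schwarz. The paper's route is shorter and sidesteps exactly the bookkeeping you flag (non-i.i.d., $n$-dependent coefficients in Stein, random target variance); your route buys a conceptual reduction to the already-proved Theorem~\ref{Theo-Salem-quanti}, at the price of redoing that proof in a more general setting and inheriting its fourth-moment hypothesis. Two minor corrections: Theorem~\ref{Theo-Salem-quanti} delivers a first-moment bound $\mathbb{E}[d_{\mathcal{C}^3}^X]=O(1/\sqrt{n})$, not the second-moment $O(1/n)$ you write (harmless, since $O(1/\sqrt{n})$ still feeds Borel--Cantelli along $n^\gamma$, $\gamma>2$); and the limiting variance $\sigma^2=\sum_{j,l}\xi_j\xi_l\sin(t_j-t_l)/(t_j-t_l)$ is deterministic---only $\sigma_n^2=\mathbb{E}_X[Z_n^2]$ is $\omega$-dependent.
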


\begin{proof}
Let us fix an integer $p \geq 1$, $t=(t_1, \ldots, t_p) \in \mathbb [0,2\pi]^p$ with $t_j \neq t_j$ for $i \neq j$ and $\lambda=(\lambda_1, \ldots, \lambda_p) \in \mathbb R^p$. We set $||\lambda||_1:=\sum_{j=1}^p |\lambda_j|$. We then define 
\[
Q(t,\lambda):= \sum_{i,j} \lambda_i \lambda_j \frac{\sin(t_i-t_j)}{t_i -t_j},
\]
and for $n \geq 1$, the associated Riemann sum
\[
Q_n(t,\lambda):=\sum_{i,j=1}^p \lambda_i \lambda_j \frac{1}{n}\sum_{k=1}^n \cos \left( \frac{k(t_i-t_j)}{n} \right).
\] 
Naturally, since the cosine function has a bounded derivative, using standard comparison results between Riemann sums and their limits, we have 
\[
\left| Q(t,\lambda) - Q_n(t,\lambda)\right| = O(1/n).
\]
Therefore, if we want to establish that, $\mathbb P$ almost surely, as $n$ goes to infinity 
\[
\Psi_n(t,\lambda):=\mathbb E_X \left[ e^{i \sum_{j=1}^p \lambda_j g_n(t_j)}\right] \longrightarrow e^{-Q(t,\lambda)/2},
\]
it is sufficient to show that $\mathbb P$ almost surely
\[
\lim_{n \to \infty} \left| \Psi_n(t,\lambda) -e^{-Q_n(t,\lambda)/2} \right|=0.
\]
We have 
\[
\sum_{j=1}^p \lambda_j g_n(t_j) = \frac{1}{\sqrt{n}} \sum_{k=1}^n a_k \alpha_{k,n}(X)+b_k \beta_{k,n}(X),
\]
where
\[
\alpha_{k,n}(X):=\sum_{j=1}^p \lambda_j \cos \left( k X+\frac{k t_j}{n}\right), \quad \beta_{k,n}(X):=\sum_{j=1}^p \lambda_j \sin \left( k X+\frac{k t_j}{n}\right).
\]
A straightforward computation then yields the following ``orthogonality'' relations, for all $1\leq k \leq n$ 
and $1\leq \ell \leq m$ \begin{equation}\label{eq.ortho1}
\mathbb E_X \left[ \alpha_{k,n}(X) \alpha_{\ell,m}(X)\right] =  \mathbb E_X \left[ \beta_{k,n}(X) \beta_{\ell,m}(X)\right] = \delta_{k \ell}\times \frac{1}{2} \sum_{i,j=1}^p \lambda_i \lambda_j \cos\left( \frac{k t_i}{n}-\frac{k t_j}{m}\right),
\end{equation} 
and 
\begin{equation}\label{eq.ortho2}
\mathbb E_X \left[ \alpha_{k,n}(X) \beta_{\ell,m}(X)\right] =\delta_{k \ell}\times \frac{1}{2} \sum_{i,j=1}^p \lambda_i \lambda_j \sin\left( \frac{k t_i}{n}-\frac{k t_j}{m}\right) ,
\end{equation} 
so that by symmetry
\begin{equation}\label{eq.ortho3}
\mathbb E_X \left[ \alpha_{k,n}(X) \beta_{\ell,n}(X)\right] = 0.
\end{equation} 
In particular, we have 
\begin{equation}\label{eq.bound}
\mathbb E_X \left[ \alpha_{k,n}(X)^2\right] =  \mathbb E_X \left[ \beta_{k,n}(X)^2\right] \leq  \frac{1}{2}  \times ||\lambda||_1^2.
\end{equation} 
Set 
\[
\Delta_n:=\mathbb E \left[\left|\Psi_n(t,\lambda) - e^{-Q_n(t,\lambda)/2} \right|^2\right].
\]
By the Fubini inversion theorem, we have then for $Y$ an independent copy of $X$
\[
\Delta_n=\mathbb E_{X,Y} \left[ A_n \right]- e^{-Q_n(t,\lambda)/2} \times \mathbb E_X \left[ B_n +\overline{B}_n \right] + e^{-Q_n(t,\lambda)},
\]
where we have set
\[
\begin{array}{ll}
A_n & :=\mathbb E\left[ e^{i \frac{1}{\sqrt{n}}\sum_{k=1}^n a_k(\alpha_{k,n}(X) - \alpha_{k,n}(Y)) + b_{k}  (\beta_{k,n}(X) - \beta_{k,n}(Y))} \right],  \\
\\
B_n & :=\mathbb E\left[ e^{i \frac{1}{\sqrt{n}}\sum_{k=1}^n a_k \alpha_{k,n}(X) + b_{k}  \beta_{k,n}(X) } \right].
\end{array}
\]
By the classical comparison results for sum of independent variables with their Gaussian analogues, we can infer that if $(\widetilde{a}_k)_{k \geq 1}$ and $(\widetilde{b}_k)_{k \geq 1}$ are independent sequences of i.i.d. standard Gaussian variables, then uniformly in $X,Y$
\[
\left| A_n - \widetilde{A}_n\right| \leq \frac{C}{\sqrt{n}}, \quad \left| B_n - \widetilde{B}_n \right| \leq \frac{C}{\sqrt{n}},
\] 
where 
\[
\begin{array}{ll}
\widetilde{A}_n & :=\mathbb E\left[ e^{i \frac{1}{\sqrt{n}}\sum_{k=1}^n \widetilde{a}_k(\alpha_{k,n}(X) - \alpha_{k,n}(Y)) + \widetilde{b}_{k}  (\beta_{k,n}(X) - \beta_{k,n}(Y))} \right],  \\
\\
\widetilde{B}_n & := \mathbb E\left[ e^{i \frac{1}{\sqrt{n}}\sum_{k=1}^n \widetilde{a}_k(\alpha_{k,n}(X) + \widetilde{b}_{k}  (\beta_{k,n}(X) } \right].
\end{array}
\]
But since the variables are now independent and Gaussian, the above expectations are explicit. Namely, if we set 
\[
\mathcal E_n(t,\lambda):=\sum_{i,j} \lambda_i \lambda_j \left( \frac{1}{n} \sum_{k=1}^n \cos \left(k(X-Y)+ \frac{k(t_i-t_j)}{n}  \right)\right),
\]
after simplification we have simply
\[
\widetilde{A}_n =\displaystyle{\exp \left(- Q_n(t,\lambda) + \mathcal E_n(t,\lambda) \right)}, 
\quad \widetilde{B}_n  =\displaystyle{\exp\left( -\frac{1}{2} Q_n(t,\lambda)\right)}.
\]
As a consequence, we have 
\[
\begin{array}{ll}
\Delta_n & =\mathbb E_{X,Y} \left[ \widetilde{A}_n \right]- 2 e^{-Q_n(t,\lambda)/2} \times \mathbb E_X \left[ \widetilde{B}_n \right] + e^{-Q_n(t,\lambda)} + O\left( \frac{1}{\sqrt{n}}\right) ,\\
\\
& = e^{-Q_n(t,\lambda)} \mathbb E_{X,Y} \left[ e^{\mathcal E_n(t,\lambda)}-1 \right]+ O\left( \frac{1}{\sqrt{n}}\right) .
\end{array}
\]
Since uniformly in $X,Y$ we have  $|\mathcal E_n(t,\lambda)| \leq||\lambda||_1^2$, using the fact that there exists a constant $C_{\lambda}$ such that $|e^{x}-1| \leq C_{\lambda} |x|$ for $|x | \leq ||\lambda||_1^2$, we have 
\[
\begin{array}{ll}
\left| \mathbb E_{X,Y} \left[ e^{\mathcal E_n(t,\lambda)}-1 \right] \right|  \leq \mathbb E_{X,Y} \left[ \left| e^{\mathcal E_n(t,\lambda)}-1 \right|\right]  \leq  C_{\lambda} \mathbb E [|\mathcal E_n(t,\lambda)|]  \\
\\
  \leq\displaystyle{ C_{\lambda} \sum_{i,j} |\lambda_i \lambda_j | \times \mathbb E_{X,Y} \left[ \left| \frac{1}{n} \sum_{k=1}^n \cos \left(k(X-Y)+ \frac{k(t_i-t_j)}{n}  \right)\right|\right].}
\end{array}
\]
An immediate application of Cauchy--Schwarz inequality then yields 
\[
\mathbb E_{X,Y} \left[ \left| \frac{1}{n} \sum_{k=1}^n \cos \left(k(X-Y)+ \frac{k(t_i-t_j)}{n}  \right)\right|\right] \leq  \sqrt{\frac{1}{2n}}.
\]
As a conclusion, we get that 
\[
\Delta_n = O\left( \frac{1}{\sqrt{n}}\right).
\]
Therefore, choosing a subsequence of the form $n^{\gamma}$ with $\gamma>2$, by Borel--Cantelli Lemma, we obtain that $\mathbb P$ almost surely, we have 
\[
\lim_{n \to +\infty} \Psi_{n^{\gamma}}(t,\lambda)  = e^{-Q(t,\lambda)/2}.
\]
Now for a generic integer $m \geq 1$, we can choose $n$ such that $n^\gamma < m \leq (n+1)^{\gamma}$ and then 
\[
\left| \Psi_{n^{\gamma}}(t,\lambda)-\Psi_{m}(t,\lambda)\right| \leq \mathbb E_X \left[ |U| +| V| + |W|\right],
\]
where 
\[
\begin{array}{ll}
U & :=\displaystyle{\left(\frac{1}{\sqrt{n^{\gamma}}} - \frac{1}{\sqrt{m}} \right)\left( \sum_{k=1}^{n^{\gamma}} a_k \alpha_{k,n^{ \gamma}}(X)+b_k \beta_{k,n^{\gamma}}(X)\right)},\\
\\
V & := \displaystyle{\frac{1}{\sqrt{m}} \left( \sum_{k=1+n^{\gamma}}^m a_k \alpha_{k,m}(X) +b_k \beta_{k,m}(X)\right)}, \\
\\
W & := \displaystyle{\frac{1}{\sqrt{m}} \left( \sum_{k=1}^{n^{\gamma}} a_k \left( \alpha_{k,n^{ \gamma}}(X)-\alpha_{k,m}(X)\right) +b_k \left(\beta_{k,n^{\gamma}}(X)- \beta_{k,m}(X)\right)\right)}.
\end{array}
\]
Applying Cauchy--Schwarz inequality, using the orthogonality relations \eqref{eq.ortho1} and \eqref{eq.ortho3} and the uniform upper bound \eqref{eq.bound}, we get 
\[
\mathbb E_X [|U|] \leq ||\lambda||_1 \left(1- \sqrt{\frac{n^{\gamma}}{m}}\right) \sqrt{ \frac{1}{n^{\gamma}} \sum_{k=1}^{n^{\gamma}} \frac{a_k^2 +b_k^2}{2} }.
\]
By the strong law of large number, the above square root is $\mathbb P$ almost surely bounded, hence we have $\mathbb P$ almost surely
\[
\mathbb E_X [|U|] \leq O \left(1- \sqrt{\frac{n^{\gamma}}{m}}\right)  = O\left(\frac{1}{n}\right)= O\left(\frac{1}{m^{1/\gamma}}\right).
\]
In the same manner, we have 
\[
\mathbb E_X [|V|] \leq ||\lambda||_1 \times \sqrt{ \frac{1}{m} \sum_{k=1+n^{\gamma}}^m \frac{a_k^2 +b_k^2}{2}} = O\left(\frac{1}{\sqrt{n}}\right)=O\left(\frac{1}{m^{1/2\gamma}}\right).
\]
Finally, using once again Cauchy--Schwarz inequality and the orthogonality relations \eqref{eq.ortho1} and \eqref{eq.ortho2}, since $\max_{1\leq i \leq p}|t_i| \leq 2\pi$, we have $\mathbb P$ almost surely 
\[
\mathbb E_X [|W|] \leq ||\lambda||_1 \times \sqrt{2\pi} \times \sqrt{1-\frac{n^{\gamma}}{m}}  \times \sqrt{ \frac{1}{m} \sum_{k=1}^{n^{\gamma}} (a_k^2 +b_k^2)}=O\left(\frac{1}{n}\right) = O\left(\frac{1}{m^{1/\gamma}}\right) .
\]
As a conclusion, we get that $\mathbb P$ almost surely 
\[
\lim_{m \to +\infty} \left| \Psi_{n^{\gamma}}(t,\lambda)-\Psi_{m}(t,\lambda)\right| =0,
\]
hence the result.
\end{proof}

Let us now establish the tightness of the family of distributions of $(g_n(t))_{t \in [0,2\pi]}$ under $\mathbb P_X$, for the $\mathcal C^1$ topology.
\begin{prop}\label{prop.tension}
Almost surely with respect to the probability $\mathbb P$, the family of distributions under $\mathbb P_X$ of $(g_n(t))_{t \in [0,2\pi]}$ for $n \geq1$ is tight with respect to the $\mathcal C^1$ topology on $\mathcal C^1([0,2\pi])$. 
\end{prop}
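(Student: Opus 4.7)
The plan is to apply Kolmogorov's tightness criterion to $g_n$ and to $g_n'$ separately in $\mathcal C^0([0,2\pi])$; together with Ascoli-Arzelà, this will yield tightness of $g_n$ in the $\mathcal C^1$ topology. The whole argument rests on establishing $\mathbb P$-almost sure uniform upper bounds (in $n$ and $t$) on the second moments, under $\mathbb P_X$, of $g_n^{(j)}(t)$ for $j=0,1,2$.

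First, I would exploit $g_n^{(j)}(t)=n^{-j} f_n^{(j)}(X+t/n)$ and the $\mathbb{P}_X$-orthonormality of the trigonometric system $\{\cos(k\cdot),\sin(k\cdot)\}_{k\ge 1}$, which is preserved by the deterministic shift $t/n$ since integer-frequency sines and cosines are $2\pi$-periodic. A direct computation then gives, for every $\omega\in\Omega$, every $t\in[0,2\pi]$ and $j=0,1,2$:
\[
\mathbb{E}_X\!\left[g_n^{(j)}(t)^2\right] \;=\; \frac{1}{2\,n^{2j+1}}\sum_{k=1}^n k^{2j}\bigl(a_k^2+b_k^2\bigr).
\]
Using only the assumption $\mathbb{E}[a_1^2]=\mathbb{E}[b_1^2]=1$, the classical strong law of large numbers yields $\sigma_n:=\sum_{k=1}^n(a_k^2+b_k^2-2)=o(n)$ almost surely. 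An Abel summation then converts this into the weighted statement for $j=0,1,2$: writing
\[
\sum_{k=1}^n k^{2j}(a_k^2+b_k^2-2)=n^{2j}\sigma_n-\sum_{k=1}^{n-1}\bigl((k+1)^{2j}-k^{2j}\bigr)\sigma_k,
\]
using $(k+1)^{2j}-k^{2j}=O(k^{2j-1})$ and the almost sure estimate $\sigma_k=o(k)$, a routine Cesàro-type splitting shows that the weighted sum is $o(n^{2j+1})$ almost surely. Since $\sum_{k=1}^n k^{2j}\sim n^{2j+1}/(2j+1)$, one obtains $\mathbb P$-almost sure convergence of $\mathbb E_X[g_n^{(j)}(t)^2]$ to $1/(2j+1)$, and in particular the existence of a finite $C(\omega)$ such that $\mathbb{E}_X[g_n^{(j)}(t)^2]\le C(\omega)$ uniformly in $n\ge 1$, in $t\in[0,2\pi]$ and in $j=0,1,2$.

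Once these a priori bounds are in hand, the tightness conclusion is immediate. Writing $g_n'(t)-g_n'(s)=\int_s^t g_n''(u)\,du$ and applying Cauchy-Schwarz gives $\mathbb{E}_X[|g_n'(t)-g_n'(s)|^2]\le C(\omega)\,|t-s|^2$, and the analogous bound $\mathbb{E}_X[|g_n(t)-g_n(s)|^2]\le C(\omega)\,|t-s|^2$ holds for $g_n$ itself. Combined with the uniform control of $\mathbb{E}_X[g_n(0)^2]$, Kolmogorov's continuity/tightness criterion (with exponents $\alpha=2$, $\beta=1$) ensures that the laws under $\mathbb{P}_X$ of both $g_n$ and $g_n'$ are tight in $\mathcal C^0([0,2\pi])$, and by Ascoli-Arzelà the laws of $g_n$ are therefore tight in $\mathcal C^1([0,2\pi])$.

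The main technical obstacle, given that only a second-moment assumption is available in Theorem \ref{theo.functional}, lies in the weighted SLLN stated above: a direct variance argument à la Kolmogorov applied to $\sum k^{2j}(a_k^2-1)$ would require $\mathbb{E}[a_1^4]<\infty$, which is not at our disposal here. The Abel summation trick is precisely what bypasses this restriction by reducing everything to the ordinary law of large numbers for the unweighted partial sums $\sigma_n$.
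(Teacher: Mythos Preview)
Your proof is correct and follows essentially the same strategy as the paper: both establish the Kolmogorov/Lamperti increment bounds $\mathbb E_X[|g_n^{(j)}(t)-g_n^{(j)}(s)|^2]\le C(\omega)|t-s|^2$ for $j=0,1$, with a $\mathbb P$-almost surely finite constant coming from the strong law of large numbers, and then invoke a standard $\mathcal C^1$-tightness criterion.

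The only differences are cosmetic. The paper computes the increments directly via the sum-to-product identity $\cos(kX+kt/n)-\cos(kX+ks/n)=-2\sin(kX+k(t+s)/2n)\sin(k(t-s)/2n)$ and the crude bound $\sin^2(k(t-s)/2n)\le (t-s)^2/4$ (using $k\le n$), so that everything reduces immediately to $\sup_n \frac{1}{2n}\sum_{k=1}^n(a_k^2+b_k^2)<\infty$. You instead pass through one more derivative and Cauchy--Schwarz, which is perfectly fine. However, your Abel-summation detour is unnecessary: since $k^{2j}/n^{2j}\le 1$ for $k\le n$, one has directly
\[
\frac{1}{2n^{2j+1}}\sum_{k=1}^n k^{2j}(a_k^2+b_k^2)\;\le\;\frac{1}{2n}\sum_{k=1}^n (a_k^2+b_k^2),
\]
which is $\mathbb P$-almost surely bounded by the ordinary SLLN. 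The ``technical obstacle'' you describe therefore does not arise.
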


\begin{proof}By Theorem 1 and Remark 1 of \cite{rusakov}, in order to establish the tightness in the $\mathcal C^1$ topology of the family $(g_n(t))_{t \in [0,2\pi]}$, it is sufficient to establish some Lamperti-type criteria for both $\mathbb E_X\left[  |g_n(t)-g_n(s)|^2\right]$ and $\mathbb E_X\left[  |g_n'(t)-g_n'(s)|^2\right]$.
Using the trigonometric identities 
\[
\begin{array}{ll}
\cos \left(kX+ \frac{kt}{n}\right) -\cos\left(kX+ \frac{ks}{n}\right) & = -2 \sin \left(kX+ \frac{k(t+s)}{2n}\right)\sin \left(\frac{k(t-s)}{2n}\right),\\
\\
\sin \left(kX+ \frac{kt}{n}\right) -\sin\left(kX+ \frac{ks}{n}\right) & = 2 \cos \left(kX+ \frac{k(t+s)}{2n}\right)\sin \left(\frac{k(t-s)}{2n}\right),
\end{array}
\]
we have first, for all $1\leq k \leq n$
\[
\mathbb E_X\left[\left(\cos \left(kX+ \frac{kt}{n}\right) -\cos\left(kX+ \frac{ks}{n}\right)\right)^2\right] =2 \sin \left(\frac{k(t-s)}{2n}\right)^2,
\]
and 
\[
\mathbb E_X\left[\left(\sin \left(kX+ \frac{kt}{n}\right) -\sin\left(kX+ \frac{ks}{n}\right)\right)^2\right] =2 \sin \left(\frac{k(t-s)}{2n}\right)^2.
\]
By the strong law of large numbers, $C:=\sup_{n \geq 1} \frac{1}{2n} \sum_{k=1}^n (a_k^2+b_k^2) $ is $\mathbb P$ almost surely bounded. Therefore, using the fact that $\mathrm{vect}(\cos(kx), \sin(kx))$ and $\mathrm{vect}(\cos( \ell x), \sin( \ell x))$ are orthogonal in $\mathbb L^2([0,2\pi])$ for $1\leq k,\ell \leq n$ with $k \neq \ell$,  and that $\cos(kx) \perp \sin(kx)$, we have thus $\mathbb P$ almost surely, for all $s,t \in [0,2\pi]$
\[
\mathbb E_X\left[  |g_n(t)-g_n(s)|^2\right] = \frac{2}{n} \sum_{k=1}^n \left( a_k^2 + b_k^2\right) \;  \sin^2 \left(\frac{k(t-s)}{2n}\right) \leq C |t-s|^2.
\]
In the same manner, we get for all $s,t \in [0,2\pi]$
\[
\mathbb E_X\left[  |g_n'(t)-g_n'(s)|^2\right] =\sum_{k=1}^n \frac{2k^2}{n^3}\left( a_k^2 + b_k^2\right) \;  \sin^2 \left(\frac{k(t-s)}{2n}\right) \leq C |t-s|^2.
\]
hence the result.
\end{proof}

\begin{rem}\label{rem.bulin}
Note that the limit process $(g_{\infty}(t))_{t \in [0,2\pi]}$ is non degenerate in the sense that, $\mathbb P \otimes \mathbb P_X$ almost surely, we have 
\[
(g_{\infty}(t) = 0 ) \Longrightarrow (g_{\infty}'(t) \neq  0).
\]
Indeed, by stationarity we have $\mathbb E[g_{\infty}(t)^2]=1$, hence the classical Bulinskaya Lemma applies, see  e.g. Proposition 6.11 of \cite{azaisW}.
\end{rem}

\subsection{Salem--Zygmund Theorem for a non-uniform distribution}\label{sec.quali}
Before giving the proof of Theorem \ref{SZ-gene}, which generalizes Theorem \ref{gene-quali-theo} to a large class of distributions $\mathbb P_X$, let us first give an alternative proof of Theorem \ref{gene-quali-theo}.
\begin{proof}[Alternative proof of Theorem \ref{gene-quali-theo}]
The proof below is based on the averaging procedure raised in Section \ref{sec.introquali}. Namely we show that the $\mathbb L^2(\mathbb P)$ distance between the characteristic function of $f_n(X)$ under $\mathbb P_X$ and its limit goes sufficiently fast to zero in order to apply a Borel--Cantelli argument and conclude that the convergence is in fact almost sure under $\mathbb P$. To do so, let us set, for $\xi \in \mathbb R$
\[
\Delta_n(\xi):=\E\left[\left|\E_X\left[e^{i\xi f_n(X)}-e^{-\frac{\xi^2}{2}}\right]\right|^2\right].
\]
If $Y$ is an independent copy of $X$, by Fubini Theorem, one first write
\[
\begin{array}{ll}
\Delta_n(\xi) &=\E\left[\E_X\left[e^{i\xi f_n(X)}-e^{-\frac{\xi^2}{2}}\right]\E_Y\left[e^{-i\xi f_n(Y)}-e^{-\frac{\xi^2}{2}}\right]\right]\\
&=\mathbb{E}_{X,Y}\left[\mathbb{E}\left[e^{i\xi\left(f_n(X)-f_n(Y)\right)}+e^{-\xi^2}-e^{-\frac{\xi^2}{2}}\E_{X,Y}\left[e^{i\xi f_n(X)}+e^{-i\xi f_n(Y)}\right]\right]\right].
\end{array}
\]
Hence, we have $\Delta_n(\xi) = \mathbb{E}_{X,Y}\left[\Delta_{n,1}(\xi) \right]- e^{-\frac{\xi^2}{2}}  \mathbb{E}_{X,Y}\left[\Delta_{n,2}(\xi)\right]$ with 
\[
\begin{array}{ll}
\Delta_{n,1}(\xi) & :=\E\left[e^{i\xi \left(f_n(X)-f_n(Y)\right)}-e^{-\xi^2}\right]\\
\\
\Delta_{n,2}(\xi) & :=\E\left[e^{i\xi f_n(X)}-e^{-\frac{\xi^2}{2}}\right]+\E\left[e^{-i\xi f_n(Y)}-e^{-\frac{\xi^2}{2}}\right].
\end{array}
\]
The convergence to zero of $\Delta_{n,1}(\xi) $, resp. $\Delta_{n,2}(\xi)$, is then a simple application of the standard bivariate, resp. univariate, Central Limit Theorem under $\mathbb P$. Namely,
\[
\E\left[e^{i\xi \left(f_n(X)-f_n(Y)\right)}\right]=\prod_{k=1}^n \phi\left(\xi\frac{\cos(k X)-\cos( k Y)}{\sqrt{n}}\right)\phi\left(\xi\frac{\sin(k X)-\sin( k Y)}{\sqrt{n}}\right),
\]
where $\phi$ denotes the characteristic function of $a_1$. Since $a_1 \in \mathbb L^3\left(\mathbb{P}\right)$, the function $\log(\phi)$ is three times differentiable at the neighborhood of $0$. Using Taylor--Lagrange inequality and the fact that the random variables are centered with variance one, for some absolute constant $C>0$ which may change from line to line and for every $|t|<\frac{1}{2}$:
\begin{equation}\label{Esti-cara}
\left|\log\left(\phi(t)\right)+\frac{t^2}{2}\right|< C |t|^3.
\end{equation}
Let $\xi$ be fixed in the sequel, then for $n$ large enough we have
\[
\forall k\in\{1,\cdots,n\},\;\; \left|\xi\frac{\cos(k X)-\cos( k Y)}{\sqrt{n}}\right|<\frac 1 2.
\]
In the forthcoming computations, $n$ is assumed to be large enough so that the aforementioned estimate holds. Since \eqref{Esti-cara} applies, we obtain
\begin{eqnarray*}
&&\left|\sum_{k=1}^n \log\left(\phi\left(\xi\frac{\cos(k X)-\cos( k Y)}{\sqrt{n}}\right)\right)+\frac{1}{2}\sum_{k=1}^n \left(\xi\frac{\cos(k X)-\cos( k Y)}{\sqrt{n}}\right)^2\right|\\
&&<C \frac{|\xi^3|}{n\sqrt{n}}\sum_{k=1}^n \left|\cos(k X)-\cos( k Y)\right|^3<C\frac{|\xi|^3}{\sqrt{n}}.
\end{eqnarray*}
In the same way, we have
\begin{eqnarray*}
&&\left|\sum_{k=1}^n \log\left(\phi\left(\xi\frac{\sin(k X)-\sin( k Y)}{\sqrt{n}}\right)\right)+\frac{1}{2}\sum_{k=1}^n \left(\xi\frac{\sin(k X)-\sin( k Y)}{\sqrt{n}}\right)^2\right|\\
&&<C \frac{|\xi^3|}{n\sqrt{n}}\sum_{k=1}^n \left|\sin(k X)-\sin( k Y)\right|^3<C\frac{|\xi|^3}{\sqrt{n}}.
\end{eqnarray*}
Note that $\left(\cos(k X)-\cos( k Y)\right)^2+\left(\sin(k X)-\sin( k Y)\right)^2=2+2\cos\left(k(X-Y)\right)$ and set 
\begin{equation}\label{eq.def.epsilon}
\varepsilon_n=\varepsilon_n(X,Y):=\frac{1}{n}\sum_{k=1}^n \cos(k(X-Y)).
\end{equation}
Combining the two last estimates yields 
\[
\left|\sum_{k=1}^n \log\left(\phi\left(\xi\frac{\cos(k X)-\cos( k Y)}{\sqrt{n}}\right)\phi\left(\xi\frac{\sin(k X)-\sin( k Y)}{\sqrt{n}}\right)\right) +\xi^2 (1+ \varepsilon_n)\right|\le C\frac{|\xi|^3}{\sqrt{n}}.
\]
The exponential function being Lipschitz, one thus obtains for some constant $C(\xi)$, which changes from line to line and which only depends on $\xi$
\begin{eqnarray*}
&&\left|\prod_{k=1}^n \phi\left(\xi\frac{\cos(k X)-\cos( k Y)}{\sqrt{n}}\right)\phi\left(\xi\frac{\sin(k X)-\sin( k Y)}{\sqrt{n}}\right)- e^{-\xi^2 (1+ \varepsilon_n)}\right| \le  \frac{C(\xi)}{\sqrt{n}}.
\end{eqnarray*}
Hence, we have
\[
\left|\Delta_{n,1}(\xi)\right| \le \frac{C(\xi)}{\sqrt{n}}+\left| e^{-\xi^2 (1+ \varepsilon_n)} - e^{-\xi^2}\right| \leq \frac{C(\xi)}{\sqrt{n}} + \xi^2 \varepsilon_n
\]
and taking the expectation under $\mathbb P_X \otimes \mathbb P_Y$, by Cauchy--Schwarz inequality, we get\
\[
\left|\mathbb E_{X,Y} \left[ \Delta_{n,1}(\xi)\right] \right|\leq\frac{C(\xi)}{\sqrt{n}} +\xi^2 \underbrace{\sqrt{\E_{X,Y}\left[\varepsilon_n(X,Y)^2\right]}}_{=\sqrt{\frac{1}{2n}}\,\,\text{by ortho. of cosines}}=\frac{C(\xi)}{\sqrt{n}}.
\]
Following the exact same strategy, we have in the same manner
\[
\left|\mathbb E_{X,Y} \left[ \Delta_{n,2}(\xi)\right] \right|\leq \frac{C(\xi)}{\sqrt{n}}.
\]
Gathering these facts provide the estimate $\Delta_n(\xi) \le \frac{C(\xi)}{\sqrt{n}}$. Using Borel--Cantelli Lemma, one thus gets that $\mathbb{P}$ almost surely
\[ 
\E_X\left[e^{i\xi f_{n^3 }(X)}-e^{-\frac{\xi^2}{2}}\right] \xrightarrow[n \to +\infty]{} 0.
\] 
Applying the latter for each $\xi\in\mathbb{Q}$, and then $\xi \in \mathbb R$ by continuity, we have $\mathbb{P}$ almost surely, the central convergence of $f_{ n^3 }(X)$ under $\mathbb{P}_X$.
On the other hand, for every positive integers $m,n$ such that $n^3  \le m < (n+1)^3 $, we also have
\[
\begin{array}{ll}
\left|  \E_X\left[e^{i\xi f_{n^3 }(X)}\right] -\E_X\left[e^{i\xi f_{m }(X)}\right] \right| & \leq \xi \times \mathbb E_X \left[ \left|f_{n^3 }(X) - f_{m }(X)\right| \right]  \\
\\
& \leq \xi \sqrt{\E_X\left[\left(f_{n^3}(X)-f_m(X)\right)^2\right]},
\end{array}
\]
and as in the end of the proof of Theorem  \ref{Theo-Salem-quanti}, we have
\[
\begin{array}{ll}
\E_X\left[\left(f_{n^3}(X)-f_m(X)\right)^2\right] \\
\\
\displaystyle{\le 2 \left(\sqrt{\frac{n^3}{m}}-1\right)^2 \underbrace{\E_X\left[f_{n^3}^2(X)\right]}_{=1}+\frac{2}{n^3}\; \E_X\left[\left(\sum_{k=n^3+1}^m a_k\cos(k X)+b_k\sin(k X)\right)^2\right]}\\
\\
\le\displaystyle{ 2 \left(\sqrt{\frac{n^3}{(n+1)^3}}-1\right)^2+2\frac{(n+1)^3-n^3}{n^3} = O\left(\frac{1}{n}\right) = O\left(\frac{1}{m^{1/3}}\right).}
\end{array}
\]
As announced, $\mathbb{P}$ almost surely, $f_n(X)$ indeed converges in distribution to standard Gaussian variable under $\mathbb{P}_X$.
\end{proof}

Let us now give the proof of Theorem \ref{SZ-gene}, which roughly follows the same lines as the alternative proof of Theorem \ref{gene-quali-theo} given above, except that we shall skip some explicit computations using characteristic functions and replace them by the standard quantitative Berry--Esseen estimates in the Central Limit Theorem. For the sake of concision, we only focus on the main steps of the proof.

\begin{proof}[Proof of Theorem \ref{SZ-gene}]Let $X$ and $Y$ be independent, identically distributed random variables, fulfilling the assumptions of the statement. By the multivariate central limit Theorem, conditionally on $(X,Y)$, the couple $(f_n(X),f_n(Y))$ converges in distribution, under $\mathbb{P}$, towards a centered Gaussian vector $(G_X,G_Y)$ such that $\mathbb E_X[G_X^2]=\mathbb E_X[G_Y^2]=1$ and $\mathbb E_X[G_XG_Y]=\varepsilon_n(X,Y)$ given by \eqref{eq.def.epsilon}.
Then, the standard Berry--Esseen inequality implies that the speed of the aforementioned convergence is of order $\frac{1}{\sqrt{n}}$. Otherwise, a simple computation gives the formula 
\[
\cos(k(x-y))\cos(l(x-y))=\frac{1}{2}\left(\cos((k+l)(x-y))+\cos((k-l)(x-y))\right).
\]
Noticing that $\E_{X,Y}\left[e^{i k (X-Y)}\right]=\left|\hat{\mathbb{P}}(k)\right|^2$, one thus gets
\[
\E_{X,Y}\left[\varepsilon_n^2(X,Y)\right]=\frac{1}{2 n^2}\sum_{k,l=1}^n \left(\left|\hat{\mathbb{P}_X}(k+l)\right|^2+\left|\hat{\mathbb{P}_X}(k-l)\right|^2\right)
=O \left( \frac{1}{n^{2\alpha}}\right).
\]
Therefore, under $\mathbb{P}$, $\left(f_n(X),f_n(Y)\right)$ converges in distribution to a standard bivariate Gaussian vector at a polynomial speed $\frac{1}{n^c}$ with $c=\min\left(\alpha,\frac{1}{2}\right)$. Using again Borel--Cantelli Lemma, one deduces the central convergence under $\mathbb{P}_X$ for the subsequence $\lfloor n^a \rfloor$ with $a>\frac{1}{c}$. Finally, for $n^a<m<(n+1)^a$ one has
\begin{eqnarray*}
\E\left[\E_X\left[\left(f_m(X)-f_{\lfloor n^a \rfloor}(X)\right)^2\right]\right]=\E_X\left[\E\left[\left(f_m(X)-f_{\lfloor n^a \rfloor}(X)\right)^2\right]\right]\\
= O\left(  \left(\sqrt{\frac{(n+1)^a}{n^a}}-1\right)^2+\frac{(n+1)^a-n^a}{n^a}\right) = O\left( \frac{1}{n}\right) = O\left( \frac{1}{m^{\frac{1}{a}}}\right).
\end{eqnarray*}
Using hypercontractivity estimates for $\mathbb L^p$ norms of multi-linear polynomials, for $p>a$, one then deduces that
\[
\E\left[\E_X\left[\left(f_m(X)-f_{\lfloor n^a \rfloor}(X)\right)^{2p}\right]\right]= O\left( \frac{1}{m^{\frac{p}{a}}}\right).
\]
Hence, Borel--Cantelli Lemma again ensures that $\E_X\left[\left(f_m(X)-f_{\lfloor n^a \rfloor}(X)\right)^2\right]$ goes to 0 as $n$ goes to infinity, $\mathbb{P}$ almost surely. This concludes the proof, since under our assumptions, $\beta$ is large enough to find such an exponent $p$.
\end{proof}

\subsection{Convergence in total variation} \label{sec.tv}

We give here the proof of Theorem \ref{theo.dtv} stated in Section \ref{sec.statetv}, dealing with the total variation 
version of Theorem \ref{gene-quali-theo}. The proof is based on a variational calculus which is associated with a so-called Wright--Fisher differential operator. 

\begin{proof}[Proof of Theorem \ref{theo.dtv}]
Let us consider  a random variable $I$ which is uniformly distributed over $\{0,1,\cdots,n-1\}$ and which is independent of $X$. We denote by $\mathbb E_{I,X}$ the expectation with respect to the product lax $\mathbb P_I \otimes \mathbb P_X$. We start the proof by noticing that

\begin{equation}\label{change-law}
X_n:=\frac{2\pi}{n}I+\frac{X}{n}\stackrel{\text{law}}{=}X.
\end{equation}
Indeed, for every suitable function $\phi$ one may write
\[
\begin{array}{ll}
\displaystyle{\E_{I,X}\left[\phi\left(\frac{2\pi}{n}I+\frac{X}{n}\right)\right]} & =\displaystyle{\frac{1}{n}\sum_{k=0}^{n-1}\frac{1}{2\pi}\int_0^{2\pi}\phi\left(\frac{2\pi k}{n}+\frac{t}{n}\right) dt}\\
\\
&\displaystyle{=\sum_{k=0}^{n-1}\int_{\frac{2\pi k}{n}}^{2\pi\frac{k+1}{n}} \phi(u) du=\frac{1}{2\pi}\int_{0}^{2\pi}\phi(u) du=\E_X\left[\phi(X)\right]},
\end{array}
\]
where we have made the change of variables $u=\frac{2\pi k}{n}+\frac{t}{n}$. Let us now, introduce the following operators
$$
\left\{
\begin{array}{ll}
\forall (\phi,\psi)\in\mathcal{C}^1([0,2\pi],\R), \quad \Gamma[\phi,\psi]:=\phi'(x)\psi'(x)x\left(2\pi-x\right),\\
\\
\forall \psi \in \mathcal{C}^2([0,2\pi],\R), \quad \mathcal{L}\left[\psi\right]:=\psi''(x) (2\pi-x) x+\psi'(x) (2\pi-2 x).
\end{array}
\right.
$$
Then, one has the following integration by parts formula, which is the cornerstone of the proof, $\forall \phi\in\mathcal{C}^1([0,2\pi]),\,\forall\psi\in\mathcal{C}^2([0,2\pi]),\forall \theta \in\mathcal{C}^1([0,2\pi])$
\begin{equation}\label{IPP1}
\int_{0}^{2\pi} \Gamma[\phi,\psi]\theta(x) dx=-\int_{0}^{2\pi} \phi(x)\theta(x) \mathcal{L}\left[\psi\right](x) dx-\int_0^{2\pi} \phi(x) \Gamma\left[\theta(x),\psi(x)\right]dx.
\end{equation}
Let us now fix some continuous function $\phi$ which we assume to be compactly supported and bounded by one.  Let us also introduce its anti-derivative $\Phi(x)=\int_0^{x} \phi(t) dt$. One first write
\[
\E_X\left[\phi\left(f_n(X)\right)\right]=\E_{I,X}\left[\phi\left(f_n\left(\frac{2\pi}{n}I+\frac{X}{n}\right)\right)\right] =\mathbb{E}_{I,X}\left[\phi\left(g_{n,I}(X)\right)\right]\\
\]
with $g_{n,I}(x)=f_n\left(\frac{2\pi}{n}I+\frac{x}{n}\right)$. We then write, for $\alpha>0$ 
\[
\begin{array}{ll}
\displaystyle{\mathbb{E}_{I,X}\left[\phi\left(g_{n,I}(X)\right)\right]} & = \displaystyle{\mathbb{E}_{I,X}\left[\phi\left(g_{n,I}(X)\right)\frac{\Gamma[g_{n,I}(X),g_{n,I}(X)]+\alpha}{\Gamma[g_{n,I}(X),g_{n,I}(X)]+\alpha}\right]}\\
\\
& \displaystyle{\le\underbrace{\mathbb{E}_{I,X}\left[\phi\left(g_{n,I}(X)\right)\frac{\Gamma[g_{n,I}(X),g_{n,I}(X)]
}{\Gamma[g_{n,I}(X),g_{n,I}(X)]+\alpha}\right]}_{:=A_n(\phi)}}\\
&\displaystyle{+\underbrace{\mathbb{E}_{I,X}\left[\frac{\alpha}{\Gamma[g_{n,I}(X),g_{n,I}(X)]+\alpha}\right]}_{:=B_n(\alpha)}}
\end{array}
\]
with $\Gamma[g_{n,I}(X),g_{n,I}(X)]= g_{n,I}'(X)^2\left(2\pi-X\right)X$.
Otherwise, using the chain rule associated with $\Gamma$, we have $\phi\left(g_{n,I}(X)\right)\Gamma[g_{n,I}(X),g_{n,I}(X)]=\Gamma\left[\Phi\left(g_{n,I}(X)\right),g_{n,I}(X)\right]$, and the integration by parts formula \eqref{IPP1} leads to
\[
\begin{array}{ll}
 \displaystyle{A_n(\phi)} & = \displaystyle{-\mathbb{E}_{I,X}\left[\Phi\left(g_{n,I}(X)\right) \frac{\mathcal{L}\left[g_{n,I}(X)\right]}{\Gamma[g_{n,I}(X),g_{n,I}(X)]+\alpha}\right]}\\
\\
& \displaystyle{-\mathbb{E}_{I,X}\left[\Phi\left(g_{n,I}(X)\right) \Gamma\left[g_{n,I}(X),\frac{1}{\Gamma[g_{n,I}(X),g_{n,I}(X)]+\alpha}\right]\right]}.
\end{array}
\]
We then notice that
\[
\left|\mathbb{E}_{I,X}\left[ \frac{\Phi\left(g_{n,I}(X)\right) \times \mathcal{L}\left[g_{n,I}(X)\right]}{\Gamma[g_{n,I}(X),g_{n,I}(X)]+\alpha}\right]\right| \leq  \frac{\|\Phi\|_\infty}{\alpha} \mathbb{E}_{I,X}\left[\left|\mathcal{L}\left[g_{n,I}(X)\right]\right|\right]\le C \frac{\|\Phi\|_\infty}{\alpha},
\]
with $C=4\pi \sup_n \sqrt{\frac{1}{n}\sum_{k=1}^n (a_k^2+b_k^2)}\stackrel{\text{a.s.}}{<}\infty$.
Indeed, relying on the Cauchy--Schwarz inequality and the orthogonality of the family  $(\cos(k X),\sin(k X))_{k\ge 1}$ we have
\[
\begin{array}{ll}
\mathbb{E}_{I,X}\left[|2\pi-X| \left|g_{n,I}''(X)\right|\right] & \le 2\pi \mathbb{E}_{I,X}\left[\left|g_{n,I}''(X)\right|\right] 
\le  2\pi \sqrt{\frac{1}{n}\sum_{k=1}^n\left(a_k^2+b_k^2\right)}\\
\\
\mathbb{E}_{I,X}\left[|2\pi-2 X| \left|g_{n,I}'(X) \right|\right] &\le 2\pi \sqrt{\frac{1}{n}\sum_{k=1}^n\left(a_k^2+b_k^2\right)}.
\end{array}
\]
Using the chain-rule property and the definition of the operator $\Gamma$, we first write
\[
\left|\Gamma\left[[g_{n,I}(X),\frac{1}{\Gamma[g_{n,I}(X),g_{n,I}(X)]+\alpha}\right]\right|=\left|\frac{\Gamma[g_{n,I}(X),\Gamma[g_{n,I}(X),g_{n,I}(X)]]}{\left(\Gamma[g_{n,I}(X),g_{n,I}(X)]+\alpha\right)^2}\right|
\]
with 
\[
\begin{array}{ll}
\Gamma[g_{n,I}(X),\Gamma[g_{n,I}(X),g_{n,I}(X)]] & =2 g_{n,I}'(X)^2 g_{n,I}''(X) (2\pi-X)X \\
\\
& +g_{n,I}'(X)^3 (2\pi-X)X (2\pi-2X).
\end{array}
\]
Since $|g_{n,I}'(X)^2 (2\pi-X)X|=\Gamma[g_{n,I}(X),g_{n,I}(X)]\le \Gamma[g_{n,I}(X),g_{n,I}(X)]+\alpha$,
we get
\[
\left|\Gamma\left[[g_{n,I}(X),\frac{1}{\Gamma[g_{n,I}(X),g_{n,I}(X)]+\alpha}\right]\right|\leq  \frac{\left|g_{n,I}''(X)\right|+2\pi |g_{n,I}'(X)|}{\alpha}.
\]
We then deduce, following the same previous reasoning that
\begin{eqnarray*}
\left|\mathbb{E}_{I,X}\left[\Phi\left(g_{n,I}(X)\right) \Gamma\left[g_{n,I}(X),\frac{1}{\Gamma[g_{n,I}(X),g_{n,I}(X)]+\alpha}\right]\right]\right|\le C\frac{\|\Phi\|_\infty}{\alpha}.
\end{eqnarray*}
In conclusion we have proved that $\mathbb{P}$ almost surely, there is a constant $C>0$ such that, for every integer $n\ge1$, $|A_n(\phi)|\le C \frac{\|\Phi\|_\infty}{\alpha}.$ One also have, for all $\lambda>0$
\[
\begin{array}{ll}
|B_n(\alpha)|& = \displaystyle{\E_X\left[\frac{\alpha}{\alpha+|g_{n,I}'(X)|^2(2\pi-X)X}\right] }\\
\\
&\displaystyle{ \le\E_X\left[\frac{\alpha}{\alpha+|g_{n,I}'(X)|^2 \lambda}\right]+\mathbb{P}_X\left(X(2\pi-X)<\lambda\right)}.
\end{array}
\]
Using Salem--Zygmund CLT, we know that 
\[
g_{n,I}'(X) \stackrel[\textrm{under}\; \mathbb P_X]{\textrm{law}}{=} \displaystyle{\frac{1}{\sqrt{n}}\sum_{k=1}^n\frac{k}{n} \left(-a_k  \sin\left(X\right)+b_k \cos\left(X\right)\right)}
\]
converges in distribution towards a non-degenerate Gaussian distribution $G$. This ensures us that
\begin{eqnarray*}
&&\limsup_{n\to\infty} |B_n(\alpha)| \le \mathbb{P}_X\left((2\pi-X)X \le \lambda\right)+\E_X\left[\frac{\alpha}{\alpha+G^2\lambda}\right].
\end{eqnarray*}
Letting $\alpha\to 0$ and then $\lambda\to 0$ one obtains that
\begin{equation}\label{ineq-avant-lemme}
\lim_{\alpha\to0}\limsup_{n\to\infty} |B_n(\alpha)|=0.
\end{equation}
If $\mu_n$ is the law of $g_{n,I}(X)$ under $\mathbb P_{I} \otimes \mathbb P_X$, which naturally coincides with the law of $f_n(X)$ under $\mathbb P_X$, and if $\mu_\infty$ is the law of the standard Gaussian, the desired conclusion proceeds from the following lemma, whose proof is given in Section \ref{sec.lemmatv} of the Appendix. 
\begin{lem}\label{gene-lemma-TV}
Let $\mu_n$ a sequence of probability distributions which converges in law towards $\mu_\infty$. Assume further that for any continuous function bounded by one and compactly supported $\phi$ we have
\[
\left| \int_\R \phi(x) d\mu_n(x) \right| \le \frac{\sup_x\left|\int_0^x \phi(t) dt\right|}{\alpha}+|B_n(\alpha)|\,\,\quad\&\quad\,\,\lim_{\alpha\to 0}\limsup_n \left|B_n(\alpha)\right|=0.
\]
Then $\mu_n$ converges towards $\mu_\infty$ in total variation.
\end{lem}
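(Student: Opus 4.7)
The plan is to reduce the total variation distance to a supremum over continuous, compactly supported test functions with sup norm at most one, and then exploit the hypothesis via a low/high--frequency splitting of such functions.

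First, since $\mu_n \to \mu_\infty$ weakly, the family $\{\mu_n\}$ is tight, so a standard approximation argument reduces the claim to showing, for each fixed $M>0$,
\[
\sup_{\phi \in C_c([-M,M]),\, \|\phi\|_\infty \le 1} |\mu_n(\phi) - \mu_\infty(\phi)| \xrightarrow[n \to \infty]{} 0,
\]
the contribution from $\{|x|>M\}$ being controlled uniformly by tightness.

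For such a $\phi$, I would fix a smooth non--negative bump $\rho$ with $\int \rho = 1$ supported in $[-1,1]$, set $\rho_\delta(x) = \delta^{-1} \rho(x/\delta)$, and decompose $\phi = \phi_{low} + \phi_{high}$ with $\phi_{low} := \phi \ast \rho_\delta$. On the one hand, $\phi_{low}$ is $(C/\delta)$--Lipschitz with $C = \|\rho'\|_1$, uniformly bounded by $1$, and supported in $[-M-\delta, M+\delta]$; hence as $\phi$ varies, the family $\{\phi_{low}\}$ is relatively compact in $C([-M-\delta, M+\delta])$ by Arzela--Ascoli. Since $\mu_n \to \mu_\infty$ weakly, the linear functional $\phi \mapsto \mu_n(\phi) - \mu_\infty(\phi)$ converges to zero pointwise and hence uniformly on this relatively compact family, yielding
\[
\sup_{\phi} |\mu_n(\phi_{low}) - \mu_\infty(\phi_{low})| \xrightarrow[n \to \infty]{} 0.
\]
On the other hand, using that $\Phi := \int_0^{\cdot} \phi$ is $1$--Lipschitz with $\Phi(0)=0$ and the identity $\int_0^x (\phi \ast \rho_\delta)(t)\, dt = (\Phi \ast \rho_\delta)(x) - (\Phi \ast \rho_\delta)(0)$, a short computation gives $\sup_x |\int_0^x \phi_{high}(t)\,dt| \le 2\delta$. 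Applying the hypothesis to the admissible function $\phi_{high}/2$ then yields $|\mu_n(\phi_{high})| \le 2\delta/\alpha + 2|B_n(\alpha)|$, and passing to the limit in $n$ by weak convergence applied to the fixed continuous bounded function $\phi_{high}$ gives the analogous bound $|\mu_\infty(\phi_{high})| \le 2\delta/\alpha + 2\limsup_m |B_m(\alpha)|$.

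Combining these estimates,
\[
\limsup_n \sup_{\phi} |\mu_n(\phi) - \mu_\infty(\phi)| \le \frac{4\delta}{\alpha} + 4 \limsup_m |B_m(\alpha)|,
\]
and letting first $\delta \to 0$ for fixed $\alpha$ and then $\alpha \to 0$, using the assumption $\lim_{\alpha \to 0} \limsup_m |B_m(\alpha)| = 0$, concludes the argument. The main obstacle is precisely the absence of any equicontinuity modulus on the class $\{\|\phi\|_\infty \le 1\}$, which prevents using weak convergence directly to obtain uniform convergence; the smoothing decomposition sidesteps this by placing the smooth piece into an Arzela--Ascoli compact class and pushing the residual oscillating piece into a function with small antiderivative, for which the structural hypothesis provides a bound uniform in $n$ once $\alpha$ is sent to zero.
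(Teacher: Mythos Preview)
Your proof is correct and follows essentially the same strategy as the paper: the same mollification decomposition $\phi = \phi*\rho_\delta + (\phi - \phi*\rho_\delta)$, the same key estimate $\sup_x\bigl|\int_0^x(\phi-\phi*\rho_\delta)\bigr| = O(\delta)$ via the Lipschitz property of the antiderivative $\Phi$, and the same passage to $\mu_\infty$ by weak convergence on the fixed function $\phi_{high}$. The only cosmetic difference is in handling the smooth part $\phi*\rho_\delta$: the paper bounds it directly by the Fortet--Mourier distance, $\bigl(1+\|\rho'\|_1/\delta\bigr)\,d_{FM}(\mu_n,\mu_\infty)$, whereas you invoke Arzel\`a--Ascoli compactness of the family $\{\phi*\rho_\delta\}$ together with equicontinuity of the functionals $\psi\mapsto(\mu_n-\mu_\infty)(\psi)$; both arguments encode the same fact that weak convergence is uniform over bounded equi-Lipschitz classes.
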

\end{proof}

\section{Almost sure asymptotics for the number of real zeros} \label{sec.applizero}
This whole section is devoted to the proof of Theorem \ref{theo.as} stated in Section \ref{sec.statezero}.
Let us recall that the object of interest is the random trigonometric polynomial
\[
f_n(t) :=\frac{1}{\sqrt{n}} \sum_{k=1}^n a_k \cos(kt)+b_k \sin(kt), \quad t \in \mathbb R,
\]
and that the number of real zeros of $f_n$ in a given interval $[a,b]$ is denoted by
\[
\mathcal N(f_n, [a,b]) := \# \{ t \in [a,b], \; f_n(t)=0\}.
\]

\subsection{A probabilistic representation of the number of zeros} \label{sec.formula}
Let us start with the very simple following result.

\begin{lem}\label{lem.start}
If $f$ is a $2\pi-$periodic function with a finite number of zeros, then for any $0<h<2\pi$, we have  
\[
\frac{h}{2\pi} \times \mathcal N(f, [0,2\pi]) = \mathbb E_X \left[  \mathcal N\left(f, \left[X,X+h\right]\right)\right],
\]
where $X$ is a random variable, with uniform distribution in $[0, 2\pi]$.
\end{lem}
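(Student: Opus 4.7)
The natural approach is a double-counting / Fubini argument: rewrite the right-hand side as a sum over zeros weighted by the Lebesgue measure of the shifts $X$ that capture each zero, and exploit the $2\pi$-periodicity of $f$ to show that every zero in a fundamental domain contributes exactly $h$ to the integral.

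Concretely, let $t_1,\dots,t_N$ be the zeros of $f$ lying in $[0,2\pi)$, where $N=\mathcal N(f,[0,2\pi])$ up to the negligible ambiguity at the endpoint $\{0,2\pi\}$ (in the application to $f_n$, one has $f_n(0)\neq 0$ almost surely, so this ambiguity is immaterial). By $2\pi$-periodicity, the zeros of $f$ in $[0,4\pi)$ are precisely
\[
\{t_1,\dots,t_N\}\cup\{t_1+2\pi,\dots,t_N+2\pi\}.
\]
Since $0<h<2\pi$, for every $X\in[0,2\pi]$ the interval $[X,X+h]$ is contained in $[0,4\pi]$, so one may write
\[
\mathcal N(f,[X,X+h])=\sum_{i=1}^N \bigl(\mathbf{1}_{[X,X+h]}(t_i)+\mathbf{1}_{[X,X+h]}(t_i+2\pi)\bigr).
\]

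Taking expectation with respect to $X$ uniform on $[0,2\pi]$ and applying Fubini, I would compute for each fixed $t_i$
\[
\mathbb P_X(t_i\in [X,X+h])=\frac{1}{2\pi}\bigl|[t_i-h,t_i]\cap[0,2\pi]\bigr|,\qquad
\mathbb P_X(t_i+2\pi\in[X,X+h])=\frac{1}{2\pi}\bigl|[t_i+2\pi-h,2\pi]\cap[0,2\pi]\bigr|.
\]
A short case distinction on whether $t_i\geq h$ or $t_i<h$ then shows that the sum of these two probabilities always equals $h/(2\pi)$: if $t_i\geq h$ the first contributes $h$ and the second $0$, while if $t_i<h$ the first contributes $t_i$ and the second $h-t_i$. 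Summing over $i=1,\dots,N$ yields
\[
\mathbb E_X\bigl[\mathcal N(f,[X,X+h])\bigr]=N\cdot\frac{h}{2\pi}=\frac{h}{2\pi}\,\mathcal N(f,[0,2\pi]),
\]
which is the desired identity.

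There is no real obstacle here; the only subtle point is the bookkeeping of endpoints and of zeros landing at $0$ or $2\pi$, which only affects a set of $X$'s of measure zero and is therefore invisible to the integral. A conceptually equivalent (and perhaps more elegant) formulation would be to view the whole identity on the torus $\mathbb T=\mathbb R/2\pi\mathbb Z$: the right-hand side becomes the integral, with respect to the uniform law on $\mathbb T$, of the number of zeros of $f$ inside a rotating arc of length $h$; by rotation-invariance of the uniform measure this integral equals $h/(2\pi)$ times the total number of zeros of $f$ on $\mathbb T$, which is exactly the left-hand side.
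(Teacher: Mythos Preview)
Your proof is correct and follows essentially the same Fubini/double-counting idea as the paper: both write the number of zeros in $[X,X+h]$ as a sum of indicators over the zeros and then integrate in $X$, showing each zero contributes $h/(2\pi)$. The paper packages the periodicity via indicators taken $\mathrm{mod}\,2\pi$ (your ``torus viewpoint''), whereas you unfold explicitly to $[0,4\pi)$ and do a short case split; these are the same argument with different bookkeeping.
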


\begin{proof}
Set $N=\mathcal N(f, [0,2\pi])$ which is finite by hypothesis, and denote by $x_1, \ldots, x_N$ the zeros of $f$ in $[0,2\pi]$ and $\mu_f$ the associated empirical measure 
\[
\mu_f := \frac{1}{N} \sum_{k=1}^N \delta_{x_k}.
\]
Naturally, we have for all $a < b$ such that $b-a \leq 2\pi$
\[
\mathcal N(f, [a,b]) = N \times \int_0^{2\pi} \mathds{1}_{[a,b] \, \mathrm{mod} \,2\pi}(t) \mu_f(dt).
\]
If $X$ is uniform in $[0,2\pi]$, we have then applying Fubini inversion of sums
\[
\begin{array}{ll}
\mathbb E_X \left[  \mathcal N\left(f, \left[X,X+h\right]\right) \right] & =\displaystyle{ \frac{1}{2\pi} \int_{0}^{2\pi} \mathcal N\left(f, \left[x,x+h\right]\right) dx} \\
\\
& =\displaystyle{ \frac{N}{2\pi} \int_{0}^{2\pi} \left( \int_{0}^{2\pi}  \mathds{1}_{[x,x+h] \, \mathrm{mod} \,2\pi}(t) dx\right)\mu_f(dt)}\\
\\
& =\displaystyle{ \frac{N}{2\pi} \int_{0}^{2\pi} \left( \int_{0}^{2\pi}  \mathds{1}_{[t-h,t]\, \mathrm{mod} \,2\pi}(x) dx\right)\mu_f(dt)}\\
\\
& = \displaystyle{ \frac{N}{2\pi} \times h \times \int_{0}^{2\pi}\mu_f(dt) = \frac{N}{2\pi} \times h.}
\end{array}
\]
\end{proof}
\noindent
Applying Lemma \ref{lem.start} with $f_n$ and $h=2\pi/n$, we thus get if $X$ is a random variable with uniform distribution on $[0,2\pi]$, independent of the sequences $(a_k)_{k \geq 1}$ and $(b_k)_{k \geq 1}$, then 
\begin{equation}\label{eq.start}
\frac{\mathcal N(f_n, [0,2\pi])}{n} = \mathbb E_X \left[  \mathcal N\left(f_n, \left[X,X+\frac{2\pi}{n}\right]\right)\right].
\end{equation}
\subsection{Functional convergence in distribution}
\label{sec.TCLfunc}
The last Equation \eqref{eq.start} naturally invites to consider the process $(g_n(t))_{t \in [0,2\pi]}$ defined by 
\[
g_n(t):=f_n\left(X+ \frac{t}{n}\right), \quad t \in [0,2\pi],
\]
so that 
\begin{equation}\label{eq.start2}
\frac{\mathcal N(f_n, [0,2\pi])}{n} =  \mathbb E_X \left[\mathcal N\left(g_n, \left[0,2\pi\right]\right)\right].
\end{equation}
By Theorem \ref{theo.functional}, we know that $(g_n(t))_{t \in [0,2\pi]}$ converges in distribution in the $\mathcal C^1$ topology to a limit Gaussian process $(g_{\infty}(t))_{t \in [0,2\pi]}$ with $\sin_c$ covariance function, which is almost surely non-degenerate by Remark \ref{rem.bulin}. Furthermore, it is well known that the number of zeros of a smooth function in a given interval is a continuous functional for the $\mathcal C^1$ topology, in the neighborhood of any non-degenerate function, see \cite{rusakov,nousAMS}. Therefore, by the continuous mapping Theorem, we get the following asymptotics. 

\begin{prop}\label{prop.conv}
Almost surely with respect to $\mathbb P$, as $n$ goes to infinity 
\[
\mathcal N\left(f_n, \left[X,X+\frac{2\pi}{n}\right]\right) = \mathcal N\left(g_n, \left[0,2\pi\right]\right)
\]
converges in distribution under $\mathbb P_X$ to $\mathcal N\left(g_{\infty}, \left[0,2\pi\right]\right)$.
\end{prop}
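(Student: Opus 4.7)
The plan is to deduce the result as a direct consequence of the functional convergence provided by Theorem \ref{theo.functional} together with the continuous mapping theorem. Working on the $\mathbb P$-full measure event on which $(g_n)_{n \geq 1}$ converges in distribution to $g_\infty$ in $\mathcal C^1([0,2\pi])$ under $\mathbb P_X$, it suffices to verify that the zero-counting functional $\mathcal N(\cdot, [0,2\pi]) : \mathcal C^1([0,2\pi]) \to \mathbb N \cup \{+\infty\}$ is continuous at $g_\infty$ with $\mathbb P_X$-probability one.

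The continuity of $\mathcal N(\cdot, [0,2\pi])$ at a function $h \in \mathcal C^1([0,2\pi])$ is standard provided that (i) $h$ does not vanish at the endpoints $0$ and $2\pi$, and (ii) every zero of $h$ in $(0, 2\pi)$ is simple, i.e.\ $h(t) = 0$ implies $h'(t) \neq 0$. Under these two conditions, $h$ has a finite number of zeros $t_1, \dots, t_N$ in $[0,2\pi]$, and a standard application of the implicit function theorem shows that in a sufficiently small $\mathcal C^1$-neighborhood of $h$, each function $\tilde h$ has exactly one simple zero near each $t_j$ and no other zeros. This is essentially the argument used in \cite{rusakov,nousAMS} that is already invoked informally in the paragraph preceding the statement.

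The next step is to check that $g_\infty$ almost surely satisfies (i) and (ii). Condition (ii) is exactly the non-degeneracy statement of Remark \ref{rem.bulin}, which follows from Bulinskaya's lemma since $\mathbb E_X[g_\infty(t)^2]=1$ is non-zero (and the covariance $\sin_c$ is smooth). Condition (i) is even easier: for each fixed $t \in \{0, 2\pi\}$, the random variable $g_\infty(t)$ is a non-degenerate Gaussian under $\mathbb P_X$, hence $\mathbb P_X(g_\infty(t) = 0) = 0$.

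Combining these two observations, the law of $g_\infty$ under $\mathbb P_X$ gives full mass to the set of continuity points of $\mathcal N(\cdot, [0,2\pi])$, and the continuous mapping theorem yields the announced convergence in distribution, $\mathbb P$-almost surely. The only technical point worth a little care is the precise topological argument underlying the continuity of the zero-counting functional at non-degenerate $\mathcal C^1$-functions; since this is a classical and purely deterministic fact, no additional probabilistic estimate beyond Theorem \ref{theo.functional} and Remark \ref{rem.bulin} is needed.
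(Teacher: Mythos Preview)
Your proof is correct and follows essentially the same approach as the paper: invoke Theorem \ref{theo.functional} for the $\mathcal C^1$ convergence, use Remark \ref{rem.bulin} (Bulinskaya) for non-degeneracy of the limit, cite the continuity of the zero-counting functional at non-degenerate points as in \cite{rusakov,nousAMS}, and conclude via the continuous mapping theorem. Your version is in fact slightly more careful, since you explicitly separate and verify the endpoint condition $g_\infty(0),g_\infty(2\pi)\neq 0$, which the paper leaves implicit in the word ``non-degenerate''.
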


\subsection{Logarithmic integrability} \label{sec.logint}

Having the above Proposition \ref{prop.conv} in mind, the proof of Theorem \ref{theo.as} would be complete if we had some moment control on the number of zeros of $g_n$ in the interval $[0,2\pi]$. As a first step towards this goal, let us first establish some logarithmic moment estimates for $f_n(X)=g_n(0)$.

\begin{lem}Suppose that the random variables $(a_k,b_k)_{k \geq 1}$ are independent standard Gaussian variables or more generally are that they are independent and identically distributed with a symmetric distribution with bounded variance. Then $\mathbb P$ almost surely, for all $p>1$ and for all $0< \theta<1$, there exists a constant $C_{p,\theta}=C_{p,\theta}(\omega)$ such that
\begin{equation}\label{eq.croisspoly}
\mathbb E_X \left[ \left| \log( |f_n(X)|)\right|^p\right]  \leq C_{p,\theta} \times n^{\theta}.
\end{equation}
\label{lem.croisspoly}
\end{lem}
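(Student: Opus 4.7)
The plan is to decompose
\[
|\log|f_n(X)||^p \leq 2^{p-1}\left((\log^+|f_n(X)|)^p + (\log^-|f_n(X)|)^p\right)
\]
and handle the two parts separately. The positive part is elementary because $f_n$ cannot be too large, while the negative part is the delicate one, for which one invokes the log-integrability estimates of Nishry together with a standard Markov--Borel--Cantelli argument.

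For the positive part, we use the trivial uniform bound
\[
\|f_n\|_\infty \leq \frac{1}{\sqrt{n}}\sum_{k=1}^n (|a_k|+|b_k|) = \sqrt{n}\cdot \frac{1}{n}\sum_{k=1}^n(|a_k|+|b_k|).
\]
Under the fourth-moment assumption, the strong law of large numbers ensures that $\mathbb P$ almost surely $\frac{1}{n}\sum_{k=1}^n(|a_k|+|b_k|) \leq C(\omega)$ for all $n \geq 1$. Hence $\log^+|f_n(X)| \leq \tfrac{1}{2}\log n + C(\omega)$ uniformly in $X$, and consequently
\[
\mathbb E_X\left[(\log^+|f_n(X)|)^p\right] \leq C_p(\omega)\,(\log n)^p,
\]
which is certainly $O(n^\theta)$ for any $\theta>0$.

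For the negative part, the key input is the theorem of \cite{nishry}, which under our symmetric + bounded variance hypotheses yields that for every $q\geq 1$,
\[
M_q:=\sup_{n\geq 1}\mathbb E\!\left[\mathbb E_X\!\left[|\log|f_n(X)||^q\right]\right] < +\infty.
\]
Given $p>1$ and $\theta>0$, fix an integer $k$ with $k\theta > 1$. By Jensen's inequality,
\[
\left(\mathbb E_X\!\left[|\log|f_n(X)||^p\right]\right)^{k} \leq \mathbb E_X\!\left[|\log|f_n(X)||^{pk}\right],
\]
so Markov's inequality together with the Nishry bound gives
\[
\mathbb P\!\left(\mathbb E_X\!\left[|\log|f_n(X)||^p\right] > n^{\theta}\right)
\leq \frac{\mathbb E\!\left[\mathbb E_X\!\left[|\log|f_n(X)||^{pk}\right]\right]}{n^{k\theta}} \leq \frac{M_{pk}}{n^{k\theta}}.
\]
Since $k\theta >1$, these probabilities are summable, and the Borel--Cantelli lemma yields that $\mathbb P$ almost surely, $\mathbb E_X[|\log|f_n(X)||^p] \leq n^\theta$ for all but finitely many $n$. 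Combining this with the bound on the positive part produces the desired constant $C_{p,\theta}(\omega)$.

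The main obstacle in this scheme is clearly the uniform moment bound on the negative part, i.e.\ the input $M_q<\infty$. This is precisely the content of \cite{nishry} and requires genuine anti-concentration / small-ball estimates for $f_n(X)$ under $\mathbb P_X$, exploiting the symmetry of the coefficients to prevent pathological pile-up near zero. Once this input is granted, everything else reduces to the above direct decomposition plus Borel--Cantelli.
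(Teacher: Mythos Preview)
Your overall strategy---a uniform moment bound under $\mathbb P\otimes\mathbb P_X$, followed by Markov plus Jensen plus Borel--Cantelli---is exactly the paper's approach. The split into $\log^+$ and $\log^-$ is harmless but unnecessary: the paper bounds $\mathbb E\,\mathbb E_X[|\log|f_n(X)||^q]$ in one shot (and your $\log^+$ estimate is subsumed by this, so that part can be dropped).

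The real gap is your invocation of \cite{nishry}. That result concerns \emph{Rademacher} Fourier series and is stated with $\ell^2$ normalisation: for any deterministic coefficient sequence $(c_k)$ it gives
\[
\mathbb E_{\varepsilon}\,\mathbb E_X\Big[\Big|\log\frac{|\sum_k \varepsilon_k c_k e_k(X)|}{(\sum_k |c_k|^2)^{1/2}}\Big|^q\Big]\le K_q,
\]
with $K_q$ independent of $(c_k)$. It does \emph{not} directly yield $M_q=\sup_n\mathbb E\,\mathbb E_X[|\log|f_n(X)||^q]<\infty$ for general symmetric $(a_k,b_k)$. To pass from one to the other you must (i) write $a_k=\varepsilon_k|a_k|$, $b_k=\eta_k|b_k|$ and apply \cite{nishry} conditionally on the moduli, and then (ii) absorb the mismatch between the $\ell^2$ normalisation $\sigma_n^2=\frac{1}{2}\sum_{k\le n}(a_k^2+b_k^2)$ and your $\sqrt{n}$, which costs an extra term $|\log(\sigma_n^2/n)|^q$. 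Your claimed bound $M_q<\infty$ would then require $\sup_n \mathbb E[|\log(\sigma_n^2/n)|^q]<\infty$, which is an additional statement not contained in \cite{nishry}; in fact it can fail for small $n$ (e.g.\ whenever $\mathbb P(a_1=0)>0$, one has $\mathbb P(\sigma_n=0)>0$).

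The paper circumvents this cleanly: it keeps the bound \emph{conditional} on the moduli, notes that by the strong law $\sigma_n^2/n\to 1$ so that $|\log(\sigma_n^2/n)|^q\le 1$ for $n\ge n_0(\omega)$, and only then applies Markov and Borel--Cantelli, this time with respect to the Rademacher randomness $\mathbb P_{\varepsilon,\eta}$. Finally the identity in law between $(\varepsilon_k|a_k|,\eta_k|b_k|)$ under $\mathbb P\otimes\mathbb P_{\varepsilon,\eta}$ and $(a_k,b_k)$ under $\mathbb P$ transfers the $\mathbb P\otimes\mathbb P_{\varepsilon,\eta}$-almost sure conclusion back to $\mathbb P$. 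Once you organise the use of \cite{nishry} this way, your argument goes through and coincides with the paper's. (A minor point: your ``fourth-moment assumption'' in the $\log^+$ part is not among the hypotheses of the lemma; the strong law for $\frac{1}{n}\sum(|a_k|+|b_k|)$ only needs a first moment, which follows from finite variance.)
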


\begin{proof}
We first give the proof in the Gaussian framework, in which case it is elementary. Indeed, if $(a_k,b_k)_{k \geq 1}$ are independent standard normal variables, then under $\mathbb P$, the variable $f_n(X)$ is then also a standard Gaussian variable so that for all $p>1$
\[
\mathbb E \left[ \mathbb E_X \left[ \left| \log( |f_n(X)|)\right|^p\right]\right] = \int_\mathbb R \left| \log( |x|)\right|^p \frac{e^{-\frac{x^2}{2}}}{\sqrt{2\pi}}dx=:\kappa(p) <+\infty.
\]
Using Markov and Jensen inequalities, we have thus 
\[
\begin{array}{ll}
\displaystyle{\mathbb P \left(  \mathbb E_X \left[ \left| \log( |f_n(X)|)\right|^p\right] > n^{\theta}\right)}  & = \displaystyle{\mathbb P \left(  \mathbb E_X \left[ \left| \log( |f_n(X)|)\right|^p\right]^{\frac{2}{\theta}} > n^{2}\right)}\\
\\
 & \leq \displaystyle{\frac{1}{n^2} \mathbb E \left[ \mathbb E_X \left[ \left| \log( |f_n(X)|)\right|^p\right]^{\frac{2}{\theta}} \right]} \\
 \\
 & \leq \displaystyle{\frac{1}{n^2} \mathbb E \left[ \mathbb E_X \left[ \left| \log( |f_n(X)|)\right|^{\frac{2p}{\theta}}\right] \right] \leq \frac{\kappa(\frac{2p}{\theta})}{n^2}}.
\end{array}
\]
Therefore, by Borel--Cantelli Lemma, we deduce that $\mathbb P$ almost surely, for $n$ sufficiently large, we have 
$\mathbb E_X \left[ \left| \log( |f_n(X)|)\right|^p\right]  \leq  n^{\theta}$, hence the result.
Let us now turn to the more general case of symmetric random variables. 
The proof then follows the same lines as in the Gaussian case but the starting point, i.e. the finiteness of the moments under $\mathbb P \otimes \mathbb P_X$ is here ensured by the powerful results of \cite{nishry} for Rademacher Fourier series. Indeed, let us introduce another probability space $(\Omega_{\varepsilon,\eta}, \mathcal F_{\varepsilon,\eta}, \mathbb P_{\varepsilon,\eta})$ which carries a sequence $(\varepsilon_k, \eta_k)_{k \geq 1}$ of independent symmetric Rademacher variables. Then, under $\mathbb P$, the whole sequence $(a_k,b_k)_{k \geq 1}$ has the same law as the sequence $(\varepsilon_k |a_k|, \eta_k |b_k|)_{k \geq 1}$ under $\mathbb P \otimes \mathbb P_{\varepsilon, \eta}$. Following Corollary 1.2 of \cite{nishry}, for any $p >1$, there exists a deterministic constant $K_p$ such that
\[
\mathbb E_{\varepsilon, \eta} \left[ \mathbb E_X  \left[ \left| \log\left( \frac{\left|\sum_{k=1}^n \varepsilon_k |a_k| \cos(k X) +\eta_k |b_k| \sin(k X) \right|}{\sqrt{\sum_{k=1}^n \frac{a_k^2+b_k^2}{2}}}\right)\right|^p\right] \right] \leq K_{p} .
\]
In particular, we get
\[
\mathbb E_{\varepsilon, \eta} \left[ \mathbb E_X  \left[ \left| \log\left( \left|\frac{1}{\sqrt{n}} \sum_{k=1}^n \varepsilon_k |a_k| \cos(k X) +\eta_k |b_k| \sin(k X) \right|\right)\right|^p\right] \right] \leq K_{p,n},
\]
where we have set 
\[
K_{p,n}=K_{p,n}(\omega):=2^p K_p + \left| \log \left( \frac{1}{n}\sum_{k=1}^n \frac{a_k^2+b_k^2}{2}\right)\right|^p.
\]
By Markov inequality with respect to $\mathbb P_{\varepsilon,\eta}$, as in the proof of Lemma \ref{lem.croisspoly}, we have then for all $\theta>0$
\[
\mathbb P_{\varepsilon,\eta} \left( \mathbb E_X  \left[ \left| \log\left( \left|\frac{1}{\sqrt{n}} \sum_{k=1}^n \varepsilon_k |a_k| \cos(k X) +\eta_k |b_k| \sin(k X) \right|\right)\right|^p\right] > n^{\theta}\right) \leq \frac{K_{\frac{2p}{\theta},n}}{n^2}.
\]
Now, by the law of large numbers, $\mathbb P$ almost surely, for $n \geq n_0=n_0(\omega)$ sufficiently large, we have $K_{p,n} \leq 2^p K_p+1$ so that applying Borel--Cantelli  Lemma with respect to $\mathbb P_{\varepsilon, \eta}$, we thus get that $\mathbb P \otimes \mathbb P_{\varepsilon, \eta}$ almost surely, for $n$ sufficiently large 
\[
 \mathbb E_X  \left[ \left| \log\left( \left|\frac{1}{\sqrt{n}} \sum_{k=1}^n \varepsilon_k |a_k| \cos(k X) +\eta_k |b_k| \sin(k X) \right|\right)\right|^p\right] \leq n^{\theta}.
 \]
But since the law of $(\varepsilon_k |a_k|, \eta_k |b_k|)_{k \geq 1}$ under $ \mathbb P \otimes \mathbb P_{\varepsilon, \eta}$ coincides with the one of $(a_k,b_k)_{k \geq 1}$ under $\mathbb P$, we can thus conclude that $\mathbb P$ almost surely, for $n$ sufficiently large 
\[
 \mathbb E_X  \left[ \left| \log\left( \left|\frac{1}{\sqrt{n}} \sum_{k=1}^n a_k \cos(k X) +b_k \sin(k X) \right|\right)\right|^p\right] \leq n^{\theta},
 \]
hence the result.
\end{proof}

Thanks to the explicit rate of convergence in Salem--Zygmund estimates established in Theorem \ref{Theo-Salem-quanti} and more precisely in Corollary \ref{speed-cv-ps} above, the last result can actually be reinforced to show that the logarithmic moments are in fact $\mathbb P$ almost surely bounded. 

\begin{lem}Suppose that the random variables $(a_k,b_k)_{k \geq 1}$ are independent standard Gaussian variables or more generally are that they are independent and identically distributed with a symmetric distribution with bounded variance.  Then $\mathbb P$ almost surely, for all $p>1$, there exists a constant $C_{p}=C_{p}(\omega)$ such that uniformly in $n \geq 1$
\[
\mathbb E_X \left[ \left| \log( |f_n(X)|)\right|^p\right]  \leq C_{p}.
\]\label{lem.borne}
\end{lem}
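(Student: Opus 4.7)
The plan is to combine the quantitative Salem--Zygmund rate of Corollary \ref{speed-cv-ps} with the subpolynomial log-integrability already given by Lemma \ref{lem.croisspoly}. Fixing some $\beta<1/6$ as in Corollary \ref{speed-cv-ps}, I would split the expectation into three regimes according to the size of $|f_n(X)|$, separated by the threshold $u_0(n):=C_1(\omega)\,n^{-\beta/4}$, with $C_1(\omega)$ to be chosen below.

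The tail $\{|f_n(X)|>1\}$ is easy: since $(\log y)^p\le M_p\sqrt{y}$ for all $y\ge 1$, this contribution is bounded by $M_p\,\mathbb{E}_X[|f_n(X)|^{1/2}]\le M_p\,\mathbb{E}_X[f_n(X)^2]^{1/4}$, and the latter equals $(\frac{1}{2n}\sum_{k=1}^n(a_k^2+b_k^2))^{1/4}$, which is $\mathbb{P}$-almost surely bounded by the strong law of large numbers. The core of the argument will be a small-ball estimate. Testing $d^X_{\mathcal{C}^3}$ against a smooth cutoff $\phi_u$ with $\mathds{1}_{[-u,u]}\le\phi_u\le\mathds{1}_{[-2u,2u]}$ and $\|\phi_u^{(k)}\|_\infty\le C u^{-k}$ for $0\le k\le 3$, and comparing with $\mathbb{P}_X(|G|\le 2u)\le Cu$, Corollary \ref{speed-cv-ps} should yield
\[
F_n(u):=\mathbb{P}_X(|f_n(X)|\le u)\;\le\; Cu+\frac{C_2(\omega)}{u^3\,n^\beta},
\]
so that $F_n(u)\le 2Cu$ for every $u\ge u_0(n)$, upon choosing $C_1(\omega)$ large enough. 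On the intermediate regime $u_0(n)<|f_n(X)|\le 1$, an integration by parts using $F_n(u)\le 2Cu$ and the fact that $u_0(n)(\log n)^p\to 0$ then gives a bound uniform in $n$.

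For the delicate regime $\{|f_n(X)|\le u_0(n)\}$, I would apply H\"older's inequality with conjugate exponents $q>1$ and $q/(q-1)$:
\[
\mathbb{E}_X\bigl[|\log|f_n(X)||^p\,\mathds{1}_{|f_n(X)|\le u_0}\bigr]\le \mathbb{E}_X\bigl[|\log|f_n(X)||^{pq}\bigr]^{1/q}\, F_n(u_0)^{1-1/q}.
\]
Lemma \ref{lem.croisspoly}, applied at order $pq$ with any fixed $\theta<\beta(q-1)/4$, bounds the first factor by $C(\omega)\,n^{\theta}$, while the small-ball estimate above gives $F_n(u_0)\lesssim n^{-\beta/4}$. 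The resulting exponent $\theta/q-(\beta/4)(1-1/q)$ is non-positive by this choice of $\theta$, and one concludes.

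The main obstacle is precisely this very small scale $\{|f_n(X)|\le u_0(n)\}$: the $\mathcal{C}^3$ estimate of Corollary \ref{speed-cv-ps} is efficient only down to scale $n^{-\beta/4}$, and below this threshold one must absorb the subpolynomial growth $n^{\theta}$ provided by Lemma \ref{lem.croisspoly} against the smallness of $F_n(u_0)$ through H\"older's inequality. In this sense, both the quantitative and the non-quantitative variations on Salem--Zygmund are genuinely needed in combination in order to reach a bound which is uniform in $n$.
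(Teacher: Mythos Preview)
Your proposal is correct and follows essentially the same strategy as the paper: both combine the quantitative rate of Corollary~\ref{speed-cv-ps} (yielding a small-ball bound $\mathbb P_X(|f_n(X)|\le u)\lesssim u$ down to scale $u\sim n^{-\beta/4}$) with the subpolynomial a~priori bound of Lemma~\ref{lem.croisspoly}, glued at that threshold by H\"older/Cauchy--Schwarz. The only execution difference is that the paper builds a $\mathcal C^3$ truncation $\log_M$ of $|\log|$ and invokes a separate Kolmogorov-versus-$\mathcal C^3$ comparison (Lemma~\ref{lem.compar}), whereas you extract the small-ball estimate directly from a bump test function and run a layer-cake argument; the two implementations are interchangeable.
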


\begin{proof}[Proof of Lemma \ref{lem.borne}]
Let $a>0$, then for $x$ small enough, we have 
\[
|\log(a+x)| = \left \lbrace \begin{array}{ll} |\log(a) | -P_a(x)+o(x^3), & \mathrm{if} \;\; 0<a<1, \\
\\
 |\log(a) | +P_a(x) +o(x^3), & \mathrm{if} \;\; a>1,
\end{array} \right.
\]
where $P_a(x):=\frac{x}{a} - \frac{x^2}{2a^2} + \frac{x^3}{3a^3} $.
Fix $M>e$ so that $\log(M)>1$ and let us introduce the following smooth truncation of the logarithm function, defined for all $x \geq 0$ 
\[
\log_M(x):=\left \lbrace \begin{array}{ll}  
\log(M)-(Mx)^4 \times P_{\frac{1}{M}}\left(x-\frac{1}{M}\right),& \mathrm{if} \;\; x \leq \frac{1}{M},\\
\\
|\log(x)| & \mathrm{if} \;\; \frac{1}{M}<x<M,\\
\\
 \log(M)+(1-(x-M))^4 \times P_{M}(x-M), & \mathrm{if} \;\; M\leq x \leq M+1,\\
 \\
 \log(M), & \mathrm{if} \;\; x \geq M+1.
\end{array} \right.
\]
The function $x \mapsto \log_M(x)$ is then non-negative and bounded by $\log(M)+2$ on $\mathbb R^+$ and it satisfies $\log_M(x) \leq |\log(x)|$. It coincides with $x \mapsto |\log(x)|$ on the interval $[1/M,M]$, with matching derivatives up to order three at $x=1/M$ and $x=M$, and vanishing derivatives up to order three at $x=0$ and $x=M+1$. In particular, it is $\mathcal C^3$ on each of the intervals $[0,1)$ and $(1,+\infty)$. Moreover, in the neighborhood of $1$, we have $\log_M(1+x)=|\log(1+x)| \approx |x|$. As a result, for all $p>3$, the function $x \in \mathbb R \mapsto \log_M(|x|)^p$ is $C^3$ on $\mathbb R$, and we have 
\[
\max\left( ||\log_M^p||_{\infty}, ||{\log_M^p}'||_{\infty},||{\log_M^p}''||_{\infty}, ||{\log_M^p}^{(3)}||_{\infty}\right) \leq p^3 M^3\log(M)^{p-1}.
\]
 By the triangle inequality, if $G \sim \mathcal N(0,1)$ under $\mathbb P_X$, we can now write 
 \[
 \begin{array}{ll}
 \mathbb E_X \left[ \left| \log( |f_n(X)|)\right|^p\right] & \leq  \underbrace{\left| \mathbb E_X \left[ \left| \log( |f_n(X)|)\right|^p  \right]  -  \mathbb E_X \left[ \left| \log_M( |f_n(X)|)\right|^p\right]\right|}_{A_n}   \\
 \\
& + \underbrace{\left|  \mathbb E_X \left[ \left| \log_M( |f_n(X)|)\right|^p  \right]  -  \mathbb E_X \left[ \left| \log_M( |G|)\right|^p\right] \right|}_{B_n}\\
\\
 & +  \mathbb E_X \left[ \left| \log_M( |G|)\right|^p\right].
 \end{array}
 \]
 Since $\log_M(|x|) \leq |\log(|x|)|$ on $\mathbb R$, we have $\mathbb E_X \left[ \left| \log_M( |G|)\right|^p\right] \leq \mathbb E_X \left[ \left| \log( |G|)\right|^p\right]\leq \kappa_p.$
Moreover, thanks to Theorem \ref{Theo-Salem-quanti} and Corollary \ref{speed-cv-ps}, since the function $\log_M^p$ is of class $\mathcal C^3$ with bounded derivatives, for some $\beta<1/6$, we have $\mathbb P$ almost surely for $n$ large enough 
\[
B_n \leq p^3 M^3\log(M)^{p-1} \times d_{\mathcal C^3}^X( f_n(X), G) \leq \frac{C \, p^3 M^3\log(M)^{p-1}}{n^{\beta}}.
\]
Finally, we can upper bound $A_n$ in the following way
\[
A_n  \leq  A_{n,1} +A_{n,2}+A_{n,3}+A_{n,4},
\]
where 
\[
\begin{array}{ll}
 A_{n,1} & =  \mathbb E_X \left[ \left| \log( |f_n(X)|)\right|^p \mathds{1}_{|f_n(X) |<1/M}\right]   \\
 \\
 A_{n,2} &:= \mathbb E_X \left[ \left| \log_M( |f_n(X)|)\right|^p \mathds{1}_{|f_n(X) |<1/M}  \right]\\
\\
 A_{n,3} & =  \mathbb E_X \left[ \left| \log( |f_n(X)|)\right|^p \mathds{1}_{|f_n(X) |>M}\right]   \\
 \\
 A_{n,4} &:= \mathbb E_X \left[ \left| \log_M( |f_n(X)|)\right|^p \mathds{1}_{|f_n(X) |>M}  \right].\\
\end{array}
\]
Next, since $||\log_M||_{\infty} \leq \log(M)+2$, we have 
\[
\begin{array}{ll}
A_{n,2} & \leq (\log(M)+2)^p \times \mathbb P_X \left( |f_n(X) |<1/M\right), \\
\\
A_{n,4} & \leq (\log(M)+2)^p \times \mathbb P_X \left( |f_n(X) |>M\right). \\
\end{array}
\]
By Markov inequality, we deduce that 
\[
A_{n,4}  \leq (\log(M)+2)^p \times  \frac{\mathbb E_X \left[|f_n(X) |^2\right]}{M^2} = \frac{(\log(M)+2)^p}{M^2} \times \frac{1}{n}\sum_{k=1}^n \frac{a_k^2+b_k^2}{2}.
\]
Let us now make use of the following general lemma which allow to compare the $\mathcal C^3$ distance used in Theorem \ref{Theo-Salem-quanti} and Corollary \ref{speed-cv-ps} to the more classical Kolmogorov distance. Its proof is given in Section \ref{sec.lemmaKol} of the appendix.
\begin{lem}\label{lem.compar}
Let $(Y_n)$ a sequence of random variables which converges in distribution towards a standard Gaussian variable $G$ under $\mathbb P_X$, then we have 
\[
\sup_{x \in \mathbb R}\left| \mathbb P_X( G \leq x) - \mathbb P_X( Y_n \leq x) \right| = O \left( d_{C^3}^{X}(Y_n,G)^{\frac{1}{4}}\right).
\]
\end{lem}
\noindent
Applying Lemma \ref{lem.compar} with $Y_n=f_n(X)$, in conjunction with Corollary \ref{speed-cv-ps}, we have
\[
\left| \mathbb P_X \left( |f_n(X) |<\frac{1}{M}\right) -  \mathbb P_X \left( |G|<\frac{1}{M}\right) \right| = O \left( d_{\mathcal C^3}^X\left( f_n(X),G \right)^{\frac{1}{4}}\right) = O\left(\frac{1}{n^{\frac{\beta}{4}}}\right).
\]
As a result, we get 
\[
\begin{array}{ll}
A_{n,2}  & \leq (\log(M)+2)^p \times \left| \mathbb P_X \left( |G |<\frac{1}{M}\right)+ O\left(\frac{1}{n^{\frac{\beta}{4}}}\right)\right] \\
\\
& \leq (\log(M)+2)^p \times \left[ O\left(\frac{1}{M} \right)+ O\left(\frac{1}{n^{\frac{\beta}{4}}}\right)\right].
\end{array}
\]
Otherwise, using Cauchy--Schwarz inequality, we have 
\[
\begin{array}{ll}
| A_{n,1}|^2  & \leq  \mathbb E_X \left[ \left| \log( |f_n(X)|)\right|^{2p} \right] \mathbb P_X \left(|f_n(X) |<1/M \right),\\
 \\
 | A_{n,3}|^2  & \leq  \mathbb E_X \left[ \left| \log( |f_n(X)|)\right|^{2p} \right] \mathbb P_X \left(|f_n(X) |>M \right).
 \end{array}
\]
By Lemma \ref{lem.croisspoly}, we then get that $\mathbb P$ almost surely, for all $\theta>0$
\[
|A_{n,3}|^2   \leq  \mathbb E_X \left[ \left| \log( |f_n(X)|)\right|^{2p} \right] \frac{\mathbb E_X \left[|f_n(X) |^2\right]}{M^2}  \leq C_{2p,\theta} \times n^{\theta}\times \frac{1}{M^2} \times \frac{1}{n}\sum_{k=1}^n \frac{a_k^2+b_k^2}{2},
\]
and as above
\[
|A_{n,1}|^2  \leq C_{2p,\theta} \times n^{\theta}\times \left[ O\left(\frac{1}{M} \right)+ O\left(\frac{1}{n^{\frac{\beta}{4}}}\right)\right].
\]
As a conclusion, choosing $M=M(n)$ of the form $n^{\gamma}$ with $0<\gamma<\beta/4$, and $\theta< \gamma$, we obtain that $\mathbb P$ almost surely, as $n$ goes to infinity
\[
\mathbb E_X \left[ \left| \log( |f_n(X)|)\right|^p\right]  \leq \kappa_p + o(1), 
\]
hence the result.

\end{proof}

\subsection{Moment estimates} \label{sec.moments}

Let us now describe how the above estimates of Section \ref{sec.logint} on the logarithmic moments of $|f_n(X)|=|g_n(0)|$ actually allow to obtain some moment estimates for the number of zeros $\mathcal N(g_n,[0,2\pi])$ of $g_n$ in $[0,2\pi]$. Precisely, the goal of this section is to prove the following result. 

\begin{prop}\label{pro.moment}
For all $p\geq 1$, $\mathbb P$ almost surely we have 
\[
\sup_{n \geq 1} \mathbb E_X \left[ |\mathcal N(g_n,[0,2\pi])|^p\right]< +\infty.
\]
\end{prop}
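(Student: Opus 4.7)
The plan is to count real zeros of $g_n$ via Jensen's formula applied to the entire complex extension $G_n(z) := f_n(X + z/n)$ of $g_n$. The $p$-th moment of $\mathcal{N}(g_n,[0,2\pi])$ under $\mathbb{P}_X$ then reduces to controlling $\mathbb{P}_X$-moments of $\log_+ \sup_{D(\xi,R)}|G_n|$ and of $\log_- |G_n(\xi)|$ at finitely many deterministic base points $\xi\in[0,2\pi]$, both of which are available from the preceding work. The feature that keeps the argument uniform in $n$ is the rescaling by $1/n$ inside $f_n$, which confines the complex growth of $G_n$ to a bounded scale in any horizontal strip of fixed width.

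First, I would expand $G_n$ in the basis $e^{\pm i k X}$ and use the orthogonality of $(e^{ikX})_{k\in\mathbb{Z}}$ under $\mathbb{P}_X$ to compute, for any $(u,v)\in\mathbb{R}^2$,
\[
\mathbb{E}_X[|G_n(u+iv)|^2] \;=\; \frac{1}{2n}\sum_{k=1}^n (a_k^2+b_k^2)\cosh\!\left(\tfrac{2kv}{n}\right),
\]
which for $|v|\le 2$ is dominated by $\cosh(4)\cdot \frac{1}{n}\sum_{k=1}^n(a_k^2+b_k^2)$, a quantity that is $\mathbb{P}$-almost surely uniformly bounded in $n$ by the strong law. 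Since $|G_n|^2$ is subharmonic, the mean-value inequality gives
\[
\sup_{D(\xi,R)}|G_n|^2 \;\le\; \frac{1}{\pi R^2}\int_{D(\xi,2R)}|G_n|^2\, dA,
\]
valid for every $\xi\in[0,2\pi]$ and $R\le 1$. Taking $\mathbb{E}_X$ and applying Fubini, one deduces that $\mathbb{E}_X[\sup_{D(\xi,R)}|G_n|^2]$ is $\mathbb{P}$-a.s. bounded uniformly in $n$ and in $\xi\in[0,2\pi]$.

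Next, I fix once and for all $r=1/2$, $R=1$, and cover $[0,2\pi]$ by a finite number $N=O(1)$ of intervals $[\xi_j-r,\xi_j+r]$, $1\le j\le N$. Since $G_n$ is entire and, $\mathbb{P}_X$-a.s., nonzero at each $\xi_j$, Jensen's identity on $D(\xi_j,R)$ gives
\[
\log(R/r)\cdot \mathcal{N}(G_n,D(\xi_j,r)) \;\le\; \log_+ \sup_{D(\xi_j,R)}|G_n|\, +\, \log_- |G_n(\xi_j)|.
\]
Combining this with $\mathcal{N}(g_n,[0,2\pi])\le \sum_{j=1}^N \mathcal{N}(G_n,D(\xi_j,r))$ and the convexity inequality $\big(\sum_{j=1}^N a_j\big)^p\le N^{p-1}\sum_j a_j^p$, the proposition is reduced to proving that $\mathbb{E}_X[(\log_+\sup_{D(\xi_j,R)}|G_n|)^p]$ and $\mathbb{E}_X[(\log_-|G_n(\xi_j)|)^p]$ are bounded $\mathbb{P}$-a.s. uniformly in $n$.

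The first is immediate from the elementary inequality $(\log_+ y)^p\le C_p + y^2$ combined with the uniform $L^2(\mathbb{P}_X)$ bound on $\sup_{D(\xi_j,R)}|G_n|$ obtained above. For the second, the key observation is that since $X$ is uniform on $[0,2\pi]$ and $f_n$ is $2\pi$-periodic, $G_n(\xi_j)=f_n(X+\xi_j/n)$ has the same distribution under $\mathbb{P}_X$ as $f_n(X)$, so Lemma \ref{lem.borne} yields $\mathbb{E}_X[|\log|G_n(\xi_j)||^p]\le C_p(\omega)$ uniformly in $n$. The main subtlety throughout is the uniform-in-$n$ estimate on $\sup_{D(\xi,R)}|G_n|$: a pointwise bound obtained from a naive triangle-inequality estimate would produce a spurious $\sqrt{n}$ factor and a fatal $\log n$ loss in Jensen's formula, and it is precisely the rescaling by $1/n$ inside $g_n$ that eliminates this.
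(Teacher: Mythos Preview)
Your proof is correct and takes a genuinely different route from the paper's. The paper works entirely in real variables: it writes the $p$-th moment as $p\int_0^\infty s^{p-1}\mathbb{P}_X(\mathcal{N}(g_n,[0,2\pi])>s)\,ds$ and controls the tail by iterating Rolle's lemma $\lfloor s\rfloor$ times, obtaining
\[
\mathbb{P}_X\big(\mathcal{N}(g_n,[0,2\pi])>s\big)\le \mathbb{P}_X\!\left(|g_n(0)|\le \frac{(2\pi)^{\lfloor s\rfloor}}{\lfloor s\rfloor!}\,\|g_n^{(\lfloor s\rfloor)}\|_\infty\right),
\]
which it then splits into a small-ball probability for $g_n(0)=f_n(X)$ (handled via Lemma~\ref{lem.borne}) and a large-value probability for $\|g_n^{(\lfloor s\rfloor)}\|_\infty$ (handled by a Sobolev embedding together with the uniform $L^2(\mathbb{P}_X)$ control on all derivatives of $g_n$ coming from the $1/n$ rescaling). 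Your argument replaces the Rolle--Taylor machinery by Jensen's formula for the entire extension $G_n$, and the Sobolev step by subharmonicity of $|G_n|^2$ plus the explicit $L^2(\mathbb{P}_X)$ computation on a horizontal strip. Both proofs ultimately rest on the same two ingredients: the logarithmic small-ball control from Lemma~\ref{lem.borne}, and the $\mathbb{P}$-a.s.\ boundedness of $\tfrac{1}{n}\sum_k(a_k^2+b_k^2)$ ensuring uniform-in-$n$ size control of $g_n$ (resp.\ $G_n$). Your complex-analytic route is arguably cleaner, handling all orders of smoothness at once through analyticity and requiring only a fixed finite cover of $[0,2\pi]$; the paper's approach, by contrast, must let the derivative order grow with $s$, but stays purely real-variable and makes the factorial gain $1/\lfloor s\rfloor!$ (which drives the tail decay) more explicit.
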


\begin{proof}
Let us fix $p \geq 1$. We first write
\[
\mathbb E_X \left[ |\mathcal N(g_n,[0,2\pi])|^p\right]  =p \int_0^{+\infty} s^{p-1} \mathbb P_X \left(\mathcal N(g_n,[0,2\pi])>s  \right)ds.
\]
By iterating Rolle Lemma $\lfloor s \rfloor$ times, see for example p.19 of \cite{nousAMS}, the last probability can then be upper bounded as follows
\[
\mathbb P_X \left(\mathcal N(g_n,[0,2\pi])>s  \right)  \leq \mathbb P_X \left(|g_n(0)| \leq \frac{(2\pi)^{\lfloor s \rfloor}}{\lfloor s \rfloor !}||g_n^{(\lfloor s \rfloor)}||_{\infty}\right),
\]
so that for any $R>0$
\begin{equation}\label{eq.bornprob0}
\mathbb P_X \left(\mathcal N(g_n,[0,2\pi])>s  \right)  \leq \mathbb P_X \left(|g_n(0)| \leq \frac{(2\pi)^{\lfloor s \rfloor}R}{\lfloor s \rfloor !} \right)+ \mathbb P_X \left( ||g_n^{(\lfloor s \rfloor)}||_{\infty}>R\right).
\end{equation}
Applying Markov inequality, we get
\[
\mathbb P_X \left( ||g_n^{(\lfloor s \rfloor)}||_{\infty}>R\right) \leq \frac{1}{R^2} \times \mathbb E_X \left[ ||g_n^{(\lfloor s \rfloor)}||_{\infty}^2 \right].
\]
Now, comparing the uniform norm with Sobolev norms, see Lemma 5.15 p.107 of \cite{adams}, if $|| \cdot ||_{2}$ denotes the standard $\mathbb L^2$ norm in $\mathbb L^2([0,2\pi])$, there exists a universal constant $C>0$ such that 
\[
\mathbb E_X \left[ ||g_n^{(\lfloor s \rfloor)}||_{\infty}^2 \right]  \leq C \left(\mathbb E_X \left[   ||g_n^{(\lfloor s \rfloor)}||_{2}^2\right] +\mathbb E_X \left[   ||g_n^{(\lfloor s \rfloor+1)}||_{2}^2 \right] \right).
\]
But for any integer $\ell$, we have 
\[
\mathbb E_X \left[   ||g_n^{( \ell )}||_{2}^2\right] = \frac{1}{2n} \sum_{k=1}^n \left(\frac{k}{n}\right)^{2\ell} \left( a_k^2+b_k^2\right) \leq \frac{1}{2n} \sum_{k=1}^n \left( a_k^2+b_k^2\right), 
\]
hence 
\[
\mathbb P_X \left( ||g_n^{(\lfloor s \rfloor)}||_{\infty}>R\right) \leq  \frac{2C}{R^2} \times \frac{1}{2n} \sum_{k=1}^n \left( a_k^2+b_k^2\right).
\]
Let us now choose $R=R(s)$ of the form $R(s):=(1+|s|)^p$ to get that 
\begin{equation}\label{eq.bornprob1}
\sup_{n \geq 1}\mathbb P_X \left( ||g_n^{(\lfloor s \rfloor)}||_{\infty}>(1+|s|)^p\right) \leq \frac{2C}{(1+|s|)^{2p}} \times \sup_{n \geq 1} \frac{1}{2n} \sum_{k=1}^n \left( a_k^2+b_k^2\right).
\end{equation}
Now, for $s$ large enough we have then $\frac{(2\pi)^{\lfloor s \rfloor}R(s)}{\lfloor s \rfloor !}<1$, and by Lemma \ref{lem.borne}, uniformly in $n$ and for all  $q \geq 1$, we have $\mathbb P$ almost surely 
\[
\begin{array}{ll}
\mathbb P_X \left(|g_n(0)| \leq \frac{(2\pi)^{\lfloor s \rfloor}R(s)}{\lfloor s \rfloor !} \right) & = \mathbb P_X \left(|f_n(X)| \leq \frac{(2\pi)^{\lfloor s \rfloor}R(s)}{\lfloor s \rfloor !} \right) \\
\\
&  = \mathbb P_X \left(\left| \log \left( |f_n(X)| \right)\right|  \geq \left| \log\left( \frac{(2\pi)^{\lfloor s \rfloor}R(s)}{\lfloor s \rfloor !} \right) \right|\right)\\
\\
& \leq \displaystyle{ \frac{\mathbb E_X \left[\left| \log \left( |f_n(X)| \right)\right|^q \right]}{\left| \log\left( \frac{(2\pi)^{\lfloor s \rfloor}R(s)}{\lfloor s \rfloor !} \right) \right|^q} \leq  \frac{C_q}{\left| \log\left( \frac{(2\pi)^{\lfloor s \rfloor}R(s)}{\lfloor s \rfloor !} \right) \right|^q}}, 
\end{array}
\]
that is, for all  $q \geq 1$, $\mathbb P$ almost surely, for $s$ large enough
\begin{equation}\label{eq.bornprob2}
\sup_{n \geq 1} \mathbb P_X \left(|g_n(0)| \leq \frac{(2\pi)^{\lfloor s \rfloor}R(s)}{\lfloor s \rfloor !} \right)= O \left( \left| \frac{1}{\lfloor s \rfloor \log \left \lfloor s \rfloor\right)}\right|^q \right).
\end{equation}
Choosing $q>p$ and combining Equations \eqref{eq.bornprob0}, \eqref{eq.bornprob1} and \eqref{eq.bornprob2}, we thus get that $\mathbb P$ almost surely 
 \[
 \sup_{n \geq 1}\int_0^{+\infty} s^{p-1} \mathbb P_X \left(\mathcal N(g_n,[0,2\pi])>s  \right)ds <+\infty,
 \]
 hence the result.
\end{proof}

Combining the last Proposition \ref{pro.moment} with the convergence in distribution established in Proposition \ref{prop.conv}, we obtain the convergence of all the moments. 
\begin{coro}\label{cor.convmoment}
For all $p \geq 1$, we have $\mathbb P$ almost surely
\[
\lim_{n \to +\infty} \mathbb E_X\left[  \mathcal N(g_n,[0,2\pi])^p\right] = \mathbb E_X\left[  \mathcal N(g_{\infty},[0,2\pi])^p\right].
\]
\end{coro}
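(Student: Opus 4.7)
The plan is to combine the convergence in distribution provided by Proposition \ref{prop.conv} with the uniform moment control of Proposition \ref{pro.moment} via a standard uniform integrability argument. Before starting, I would fix a $\mathbb P$-full measure event $\Omega_0$ on which the conclusion of Proposition \ref{prop.conv} holds and on which the uniform bound of Proposition \ref{pro.moment} holds simultaneously for every integer $q \geq 1$; such an $\Omega_0$ exists as the countable intersection of the corresponding full-measure events, which is harmless since the statement to be proved is also countably decomposed.

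Fix now $p \geq 1$ and work on $\Omega_0$. Since the map $x \mapsto x^p$ is continuous on $\mathbb R_+$, the continuous mapping theorem together with Proposition \ref{prop.conv} yields the convergence in distribution under $\mathbb P_X$
\[
\mathcal N(g_n,[0,2\pi])^p \xrightarrow[n \to +\infty]{\mathrm{law,\,under}\;\mathbb P_X} \mathcal N(g_\infty,[0,2\pi])^p.
\]
Choosing an integer $q > p$, say $q = \lceil p \rceil + 1$, Proposition \ref{pro.moment} gives, on $\Omega_0$,
\[
\sup_{n \geq 1} \mathbb E_X \left[ \mathcal N(g_n,[0,2\pi])^{q} \right] < +\infty,
\]
so the family $\{\mathcal N(g_n,[0,2\pi])^p\}_{n \geq 1}$ is bounded in $\mathbb L^{q/p}(\mathbb P_X)$ with exponent $q/p > 1$, hence uniformly integrable under $\mathbb P_X$ by the de la Vallée--Poussin criterion.

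The conclusion then follows from the classical fact that convergence in distribution together with uniform integrability implies convergence in $\mathbb L^1$, which is exactly the asserted convergence of expectations; integrability of the limit $\mathcal N(g_\infty,[0,2\pi])^p$ itself is automatic from Fatou's lemma applied to the uniform bound. There is no substantial obstacle here: all the real work has been carried out in Propositions \ref{prop.conv} and \ref{pro.moment}, and the only minor technicality, handled at the outset via the intersection defining $\Omega_0$, is to ensure that the exceptional null sets coming from the two propositions can be taken independent of $p$.
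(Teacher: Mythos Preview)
Your argument is correct and is exactly the approach the paper has in mind: the corollary is stated there as an immediate consequence of Propositions~\ref{prop.conv} and~\ref{pro.moment}, and your uniform integrability argument via a higher moment bound is the standard way to spell this out. Your care in intersecting the countably many full-measure events so that the exceptional set does not depend on $p$ is the only point worth making explicit, and you handled it properly.
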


\subsection{Conclusion}\label{sec.conclu}
Following Lemma \ref{lem.start} and Equations \eqref{eq.start} and \eqref{eq.start2}  , we have 
\[
\frac{\mathcal N(f_n, [0,2\pi])}{n} = \mathbb E_X \left[  \mathcal N\left(g_n, \left[0,2\pi \right]\right)\right].
\]
Applying the above Corollary \ref{cor.convmoment}, with $p=1$, we thus get that $\mathbb P$ almost surely 
\[
\lim_{n \to +\infty} \frac{\mathcal N(f_n, [0,2\pi])}{n}  = \mathbb E_X\left[  \mathcal N(g_{\infty},[0,2\pi])\right].
\]
By Theorem \ref{theo.functional}, under $\mathbb P_X$, the limit process $(g_{\infty}(t))_{t \in [0,2\pi]}$ is a Gaussian process with $\sin_c$ covariance so that, as it is well known
\[
\mathbb E_X\left[  \mathcal N(g_{\infty},[0,2\pi])\right] = \frac{2}{\sqrt{3}}.
\]
More generally, a similar proof would yield that $\mathbb P$ almost surely, for any $[a,b] \subset [0,2\pi]$, 
\[
\lim_{n \to +\infty} \frac{\mathcal N(f_n, [a,b])}{n}  = \frac{(b-a)}{\pi\sqrt{3}} .
\]

The above almost sure convergence can be rephrased as follows in terms of the empirical measure associated with the real zeros of $f_n$.

\begin{coro}
Let us denote by $\nu_n$ the empirical measure associated with the real roots of the random trigonometric polynomial $f_n$, namely
\[
\nu_n := \frac{1}{\mathcal N(f_n,[0,2\pi])}\sum_{ \substack{x \in [0,2\pi[ \\ f_n(x)=0}} \delta_x.
\]
Then, $\mathbb P$ almost surely, as $n$ goes to infinity, $\nu_n$ converge in distribution to the normalized Lebesgue measure on $[0,2\pi]$.  
\end{coro}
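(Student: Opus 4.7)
The plan is to deduce this weak convergence directly from the almost-sure convergence already established in Theorem \ref{theo.as}, using only elementary measure-theoretic bookkeeping to pass from the pointwise-in-$(a,b)$ statement of that theorem to a single almost-sure statement valid simultaneously for every interval.

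First I would reduce the claim to a countable one. The normalized Lebesgue measure is atomless, so every interval $[a,b] \subset [0,2\pi]$ is a continuity set for the limit. By the Portmanteau theorem it is therefore enough to check that $\nu_n([a,b]) \to (b-a)/(2\pi)$ on some collection of test intervals rich enough to separate probability measures on $[0,2\pi]$. For this purpose let $\mathcal Q$ denote the countable family of intervals in $[0,2\pi]$ with rational endpoints. Theorem \ref{theo.as} applied to each $[a,b] \in \mathcal Q$ yields a full-probability event $\Omega_{a,b}$ on which
\[
\frac{\mathcal N(f_n,[a,b])}{n} \xrightarrow[n\to\infty]{} \frac{b-a}{\pi\sqrt 3},
\]
and the same for $[0,2\pi]$ with limit $2/\sqrt 3$. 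Setting $\Omega^{\ast} := \bigcap_{[a,b]\in\mathcal Q}\Omega_{a,b}$, one obtains a $\mathbb P$-full event on which all these convergences hold simultaneously. On $\Omega^{\ast}$ and for any $[a,b] \in \mathcal Q$ one may then write
\[
\nu_n([a,b]) \;=\; \frac{n^{-1}\mathcal N(f_n,[a,b])}{n^{-1}\mathcal N(f_n,[0,2\pi])} \;\xrightarrow[n\to\infty]{}\; \frac{(b-a)/(\pi\sqrt 3)}{2/\sqrt 3} \;=\; \frac{b-a}{2\pi}.
\]

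It remains to promote this convergence from rational intervals to arbitrary ones (and hence to weak convergence). Given $[a,b]\subset[0,2\pi]$ and $\varepsilon>0$, I would pick $[a',b'],[a'',b''] \in \mathcal Q$ with $[a',b'] \subset [a,b] \subset [a'',b'']$ and $(b''-a'')-(b'-a') \leq \varepsilon$. Monotonicity gives $\nu_n([a',b']) \leq \nu_n([a,b]) \leq \nu_n([a'',b''])$, and the already-established convergence on $\mathcal Q$ squeezes $\nu_n([a,b])$ between two quantities whose difference tends to $\varepsilon/(2\pi)$; letting $\varepsilon \to 0$ delivers $\nu_n([a,b]) \to (b-a)/(2\pi)$, and an approximation of any continuous bounded $\phi$ by step functions on rational intervals then yields $\int \phi \, d\nu_n \to (2\pi)^{-1}\int_0^{2\pi}\phi(t)\,dt$ on $\Omega^{\ast}$. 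The argument is essentially bookkeeping over the strong input of Theorem \ref{theo.as}, so no genuine obstacle is expected; the only point deserving care is the countability step that makes the full-probability event uniform across test intervals, which is handled by the intersection defining $\Omega^{\ast}$.
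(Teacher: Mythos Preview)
Your proposal is correct and is essentially what the paper has in mind: the corollary is stated immediately after Theorem \ref{theo.as} with the remark that the almost sure convergence ``can be rephrased'' in terms of the empirical measure, and no further proof is given. Your argument simply spells out the standard passage from convergence on intervals to weak convergence, the only genuine content being the countability step that produces a single full-probability event; this is prudent, though note that the phrasing of Theorem \ref{theo.as} already places the ``$\mathbb P$ almost surely'' before the quantifier over $[a,b]$, so in the paper's reading that step is implicit.
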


\newpage
\appendix
\section{Appendix}

\subsection{Proof of Lemma \ref{lem.Antilde}}\label{app.lem1}

We give here the proof of Lemma \ref{lem.Antilde}, which is the key step in the proof of Theorem \ref{Theo-Salem-quanti}, i.e. the quantified version of Salem--Zygmund convergence. 

\begin{proof}[Proof of Lemma \ref{lem.Antilde}] We have the following decomposition
\begin{eqnarray*}
\tilde{A}_n(\xi)&:=&\mathbb{E}\left(\left|\frac{1}{\sqrt{n}}\sum_{k=1}^n\mathbb{E}_X\left[R_k(X)\exp\left(i \xi  S_n^{k}(X)\right)\right]\right|^2\right)\\
&=&\E_X\E_Y\left[\mathbb{E}\left[\frac{1}{n}\sum_{k,l=1}^n R_k(X) R_l(Y) \exp\left(i \xi  \left(S_n^{k}(X)-S_n^{l}(Y)\right)\right)\right]\right]\\
&=&\underbrace{\E_X\E_Y\left[\mathbb{E}\left[\frac{1}{n}\sum_{k=1}^n R_k(X) R_k(Y) \exp\left(i \xi  \left(S_n^{k}(X)-S_n^{k}(Y)\right)\right)\right]\right]}_{\text{diagonal term:=Diag}}\\
&+&\underbrace{\E_X\E_Y\left[\mathbb{E}\left[\frac{1}{n}\sum_{k\neq l} R_k(X) R_l(Y) \exp\left(i \xi  \left(S_n^{k}(X)-S_n^{l}(Y)\right)\right)\right]\right]}_{\text{off diagonal term:=Off-Diag}}.
\end{eqnarray*}

\medskip
\noindent
\underline{\textit{The diagonal term:}}\par
\medskip
\noindent
We start by handling the diagonal term. We have
\[
\begin{array}{ll}
\mathrm{Diag} & =\displaystyle{\E_X\E_Y\left[\frac{1}{n}\sum_{k=1}^n \mathbb{E}\left[R_k(X) R_k(Y)\right] \mathbb{E}\left[\exp\left(i \xi  \left(S_n^{k}(X)-S_n^{k}(Y)\right)\right)\right]\right]}\\
\\
& = \displaystyle{\E_X\E_Y\left[\mathbb{E}\left[\frac{1}{n}\sum_{k=1}^n \cos(k(X-Y)) \exp\left(i \xi  \left(S_n^{k}(X)-S_n^{k}(Y)\right)\right)\right]\right]}\\
\\
& = \displaystyle{\underbrace{\E_X\E_Y\left[\mathbb{E}\left[\frac{1}{n}\sum_{k=1}^n \cos(k(X-Y)) \exp\left(i \xi  \left(S_n(X)-S_n(Y)\right)\right)\right]\right]}_{:=\mathrm{Diag}_1}}\\
\\
& + \displaystyle{\underbrace{\E_X\E_Y\left[\frac{1}{n}\sum_{k=1}^n \cos(k(X-Y)) \mathbb{E}\left[ e^{i \xi  \left(S_n^{k}(X)-S_n^{k}(Y)\right)}\right] \mathbb{E}\left[1-\Psi_{k,k,X,Y}(\xi)\right]\right]}_{:=\mathrm{Diag}_2}},
\end{array}
\]
where we have set, for $1\leq k, l \leq n$
\[
\Psi_{k,l,X,Y}(\xi):=\exp\left(i\frac{\xi}{\sqrt{n}}\left(R_k(X)-R_l(Y)\right)\right).
\]
A Taylor expansion at the order $2$ gives the following uniform bound:
\[
\left|\Psi_{k,k,X,Y}(\xi)-1-i\frac{\xi}{\sqrt{n}}\left(R_k(X)-R_k(Y)\right)\right| \le \frac{|\xi|^2}{2 n}.
\]
Since $R_k(X)-R_k(Y)$ is centered with respect to $\mathbb P$, we have  
\[
\left| \mathbb{E}\left[1-\Psi_{k,k,X,Y}(\xi)\right] \right| \leq \frac{|\xi|^2}{2 n}, 
\]
so that $\mathrm{Diag}_2 \leq \frac{\xi^2}{2 n}$. Moreover, using Fubini inversion, we have
$
\mathrm{Diag}_1= \E\left[\widetilde{\mathrm{Diag}}_1 \right]
$
where
\[
\begin{array}{ccl}
\widetilde{\mathrm{Diag}}_1 &:=& \displaystyle{\mathbb{E}_X\E_Y\left[\frac{1}{n}\sum_{k=1}^n \cos(k(X-Y)) \exp\left(i \xi  \left(S_n(X)-S_n(Y)\right)\right)\right]}\\
\\
&=&\displaystyle{\E_{X,Y}\left[\frac{1}{2 n} \sum_{k=1}^n \exp\left(i k (X-Y)\right) \exp\left(i \xi  \left(S_n(X)-S_n(Y)\right)\right)\right]}\\
\\
&+&\displaystyle{\E_{X,Y}\left[\frac{1}{2 n} \sum_{k=1}^n \exp\left(-i k (X-Y)\right) \exp\left(i \xi  \left(S_n(X)-S_n(Y)\right)\right)\right]}\\
\\
&=& \displaystyle{\frac{1}{2n}\sum_{k=1}^n \left|\frac{1}{2\pi}\int_0^{2\pi} e^{i k x} e^{i \xi S_n(x)} dx\right|^2}\displaystyle{+\frac{1}{2n}\sum_{k=1}^n \left|\frac{1}{2\pi}\int_0^{2\pi} e^{-i k x} e^{i \xi S_n(x)}  dx\right|^2}\\
\\
&\stackrel[\mathrm{inequality}]{\mathrm{Bessel}}{\leq}& \displaystyle{\frac{1}{2n}\left(\frac{1}{2\pi}\int_0^{2\pi} \left|\exp\left(i \xi S_n(x)\right) \right|^2 dx \right)=\frac{1}{2n}} .
\end{array}
\]
Gathering the above estimates, one thus gets
\begin{equation}\label{diago-final}
\left|\text{Diag}\right|\le \frac{|\xi|^2+1}{n}.
\end{equation}
\underline{\textit{The off-diagonal term:}}\par
\medskip
Let us define $\displaystyle{S_n^{k,l}(X):=S_n(X)-\frac{R_k(X)}{\sqrt{n}}-\frac{R_l(X)}{\sqrt{n}}}$, so that

\begin{equation}\label{eq-off-diago}
\textrm{Off-Diag}
=\mathbb{E}_{X,Y}\left[\frac{1}{n} \sum_{k\neq l} \E\left[e^{ i \xi  \left(S_n^{k,l}(X)-S_n^{k,l}(Y)\right)}\right]\mathbb{E}\left[R_k(X)R_l(Y)\Psi_{k,l,X,Y}(\xi)\right]\right].
\end{equation}
Performing this time a Taylor expansion at the order $3$ we get:
\[
\begin{array}{rl}
\Psi_{k,l,X,Y}(\xi) & =\displaystyle{
1+i\xi\frac{R_l(X)-R_k(Y)}{\sqrt{n}}-\frac{\xi^2}{2n}\left(R_l(X)-R_k(Y)\right)^2} \\
& \displaystyle{-\frac{i\xi^3}{n\sqrt{n}}\left(R_l(X)-R_k(Y)\right)^3+\mathcal{R}_{n,k,l,\xi},} \quad \textrm{with} \quad \displaystyle{\left|\mathcal{R}_{n,k,l,\xi}\right|\le \frac{|\xi|^4}{24 n^2}}.
\end{array}
\]
Now, since $k\neq l$, we have $\mathbb{E}\left[R_k(X)R_l(Y)\right] =0$ and in we introduce the following notation to lighten the expressions
$\Delta_{k,l}^p =\Delta_{k,l}^p(X,Y):= \mathbb{E}\left[R_k(X)R_l(Y)(R_l(X)-R_k(Y))^p\right]$ for $p=1,2,3$, we have then $\Delta_{k,l}^1=0$ and 
\[
\begin{array}{ll}
\Delta_{k,l}^2 & =-2\mathbb{E}\left[R_k(X)R_k(Y)\right]\mathbb{E}\left[R_l(X)R_l(Y)\right]=-2\cos(k(X-Y))\cos(l(X-Y)),\\
\\
\Delta_{k,l}^3 & =\underbrace{\mathbb{E}\left[R_k(X)R_l(Y)R_l^3(X)\right]}_{=0} -\underbrace{\mathbb{E}\left[R_k(X)R_k^3(Y)R_l(Y)\right]}_{=0}\\ 
\\&  -3\mathbb{E}\left[R_k(X)R_k(Y)R_l^2(X)R_l(Y)\right]+3\mathbb{E}\left[R_k(X)R_k^2(Y)R_l(Y)R_l(X)\right]\\
\\
& =-3 \mathbb E[a_1^3] \cos(k(X-Y))\left(\cos^2(l X)\cos(l Y)+\sin^2(l X)\sin( l Y)\right)\\
\\
& +3\mathbb E[a_1^3] \cos(l(X-Y))\left(\cos^2(k Y)\cos(k X)+\sin^2(k Y)\sin( k X)\right).
\end{array}
\]
Hence, replacing $\Psi_{k,l,X,Y}(\xi)$ by its Taylor expansion in \eqref{eq-off-diago}, we get
\[
\begin{array}{ll}
\text{Off-Diag} & =-\underbrace{\frac{\xi^2}{n^2}\E_{X,Y}\left[\sum_{k\neq l} \E\left[e^{i\xi \left(S_n^{k,l}(X)-S_n^{k,l}(Y)\right)}\right]\cos(k(X-Y))\cos(l(X-Y))\right]}_{\text{Off-diag}1}\\
&-\underbrace{\frac{i \xi^3}{n^2\sqrt{n}}\E_{X,Y}\left[\sum_{k\neq l}\E\left[e^{ i \xi  \left(S_n^{k,l}(X)-S_n^{k,l}(Y)\right)}\right] \Delta_{k,l}^3(X,Y)\right]}_{\text{Off-diag}2}+\mathcal{R}_{k,l,X,Y,\xi},
\end{array}
\]
where $\left|\mathcal{R}_{k,l,X,Y,\xi}\right|\leq \frac{\xi^4}{24 n}$.
Using a Taylor expansion at the order $1$ gives
\begin{eqnarray*}
&&\left|\E\left[e^{i\xi \left(S_n^{k,l}(X)-S_n^{k,l}(Y)\right)}\right]-\E\left[e^{i\xi \left(S_n(X)-S_n(Y)\right)}\right]\right|\\
&& \le\left|1-\mathbb{E}\left[e^{i \frac{\xi}{\sqrt{n}} \left(R_k(X)+R_l(X)-R_k(Y)-R_l(Y)\right)}\right]\right|\le\frac{\xi^2}{2n}\\
\end{eqnarray*}
Plugging this in the first off-diagonal term leads to
\begin{eqnarray*}
&&\left|\text{Off-diag}1-\frac{\xi^2}{n^2}\widetilde{\text{Off-diag}}1\right|\le \frac{\xi^4}{2 n},
\end{eqnarray*}
where
\[
\widetilde{\text{Off-diag}}1  :=\E_{X,Y}\left[\E\left[e^{i\xi \left(S_n(X)-S_n(Y)\right)}\right]\sum_{k\neq l}\cos(k(X-Y))\cos(l(X-Y))\right]
\]
satisfies
\[
\begin{array}{ll}
\left|\widetilde{\text{Off-diag}}1 \right| & \displaystyle{\le \E_{X,Y}\left[\left|\sum_{k\neq l}\cos(k(X-Y))\cos(l(X-Y))\right|\right]}\\
&\le \displaystyle{\E_{X,Y}\left[\left|\sum_{k=1}^n\sum_{l=1}^n\cos(k(X-Y))\cos(l(X-Y))\right|\right]+n}\\
&=\displaystyle{\E_{X,Y}\left[\left(\sum_{k=1}^n\cos(k(X-Y))\right)^2\right]+n=\frac{3}{2}n.}
\end{array}
\]
Finally, gathering all these facts leads to
\begin{equation}\label{off-diago1}
\left|\text{Off-diag1}\right|\le \frac{3 \xi^2}{2 n}+\frac{\xi^4}{2 n}.
\end{equation}
Next, by the same strategy since
$$\left|\E\left[e^{i\xi \left(S_n^{k,l}(X)-S_n^{k,l}(Y)\right)}\right]-\E\left[e^{i\xi \left(S_n(X)-S_n(Y)\right)}\right]\right|\le 4\frac{|\xi|}{\sqrt{n}},$$
one gets
\begin{eqnarray*}
\left|\text{Off-diag}2\right|&\le&12\left|\mathbb E[a_1^3]\right|\frac{\xi^4}{n}+\frac{|\xi|^3}{n^2\sqrt{n}}\mathbb{E}_{X,Y}\left[\left|\sum_{k\neq l} \Delta_{k,l}^3(X,Y)\right|\right].
\end{eqnarray*}
Let us simply notice that
\begin{eqnarray*}
&&\left|\sum_{k\neq l} \Delta_{k,l}^3(X,Y)-\sum_{k=1}^n\sum_{l=1}^n \Delta_{k,l}^3(X,Y)\right|\le 6 |\mathbb E[a_1^3]| n\\
&&\left|\sum_{k=1}^n\sum_{l=1}^n \Delta_{k,l}^3(X,Y)\right|\le 6 |\mathbb E[a_1^3]| \left|\sum_{k=1}^n \cos(k(X-Y))\right|.
\end{eqnarray*}
The latter ensures that
\begin{eqnarray*}
\mathbb{E}_{X,Y}\left[\left|\sum_{k\neq l} \Delta_{k,l}^3(X,Y)\right|\right]&\le& \mathbb{E}_{X,Y}\left[\left|\sum_{k=1}^n\sum_{l=1}^n \Delta_{k,l}^3(X,Y)\right|\right]+ 6 |\mathbb E[a_1^3]| n\\
&\le& 6 n |\mathbb E[a_1^3]| \times \mathbb{E}_{X,Y}\left[\left|\sum_{k=1}^n \cos (k (X-Y))\right|\right]+6|\mathbb E[a_1^3]| n\\
&\le& 12 |\mathbb E[a_1^3]| n \sqrt{n},
\end{eqnarray*}
where the Cauchy--Schwarz inequality as well as the orthogonality of the random variables $(\cos(k(X-Y)))_{k\ge1}$ have been used in the last inequality. As a matter of fact, one deduces that
\begin{equation}\label{off-diago2}
\left|\text{Off-diag}2\right|\le \frac{12 |\mathbb E[a_1^3]|}{n}\left(|\xi|^4+|\xi|^3\right).
\end{equation}
Finally, gathering the estimates \eqref{off-diago1} and \eqref{off-diago2} gives the following bound for the total off-diagonal term:
\begin{eqnarray}\label{off-diago-final}
\nonumber
\left|\text{Off-diag}\right|&\le& \frac{|\xi^4|}{24 n}+\frac{3 \xi^2}{2 n}+\frac{\xi^4}{2 n}+\frac{12 |\mathbb E[a_1^3]|}{n}\left(|\xi|^4+|\xi|^3\right)\\
&\le& \left(12+|\mathbb E[a_1^3]|\right)\frac{|\xi|^4+|\xi|^3+|\xi|^2}{n}.
\end{eqnarray}
Finally using both \eqref{diago-final} and \eqref{off-diago-final}, one indeed obtains the desired bound \eqref{Antilde-final}, namely
\[
\forall \xi \in \mathbb{R},\,\, \left|\tilde{A}_n(\xi)\right|\le|\text{Diag}|+|\text{Off-diag}|\le \left(13+|\mathbb E[a_1^3]|\right)\frac{|\xi|^4+|\xi|^3+|\xi|^2+1}{n}.
\]

\end{proof}

\subsection{Proof of Lemma \ref{gene-lemma-TV}} \label{sec.lemmatv}
Let us now give the proof of Lemma \ref{gene-lemma-TV} used in the proof of the total variation version of Salem--Zygmund convergence.

\begin{proof}[Proof of Lemma \ref{gene-lemma-TV}]
Let $\displaystyle{\frac{1}{\epsilon}\rho\left(\frac{\cdot}{\epsilon}\right)}$ be a regularization kernel, which we may assume compactly supported. Let $\phi$ be any compactly supported, continuous and bounded by $1$. We start by writing that
\begin{eqnarray*}
&&\left|\int_\R \phi(x) d\mu_n-\int_\R \phi(x)d\mu_\infty\right|\le \left|\int_\R \left(\phi(x)-\phi\ast\rho_\epsilon(x)\right) d\mu_n\right|\\
&&+\left|\int_\R \left(\phi(x)-\phi\ast\rho_\epsilon(x)\right) d\mu_\infty\right|
+\left|\int_\R \phi\ast\rho_\epsilon(x) d\mu_n-\int_\R \phi\ast\rho_\epsilon(x) d\mu_\infty\right|.
\end{eqnarray*}
By the assumptions on the sequence of probability measures $(\mu_n)_{n\ge1}$, we may infer that, for any $\alpha>0$,
\begin{eqnarray*}
&&\left|\int_\R \left(\phi(x)-\phi\ast\rho_\epsilon(x)\right) d\mu_n\right|\le \frac{\sup_{x\in\R} \left|\int_0^x \left( \phi(x)-\phi\ast\rho_\epsilon(x)\right) d\mu_n\right|}{\alpha}+B_n(\alpha).\end{eqnarray*}
Using Fubini inversion of sums one may write
\begin{eqnarray*}
\int_\R \left(\phi(x)-\phi\ast\rho_\epsilon(x)\right) d\mu_n&=&\int_0^x \phi(t)\int_{\R}\left(\rho_\epsilon(y)-\phi(t-y)\rho_\epsilon(y)\right)dydx\\
&=&\int_\R \rho_\epsilon(y)\left(\int_0^x\left(\phi(t)-\phi(t-y)\right)dt\right)dy\\
&=&\int_\R \rho(y) \left(\int_0^x\left(\phi(t)-\phi(t-\epsilon y)\right)dt\right)dy.
\end{eqnarray*}
Since $\|\phi\|_\infty\le 1$, one gets
\begin{eqnarray*}
\left|\int_\R \left(\phi(x)-\phi\ast\rho_\epsilon(x)\right) d\mu_n\right|&\le& \int_ \R \rho(y)\left(\int_{-\epsilon y}^{0}|\phi(t)| dt+\int_{x-\epsilon y}^{x}|\phi(t)| dt\right)+|B_n(\alpha)|
\\
&&\le \epsilon \int_\R |y| \rho(y) dy+|B_n(\alpha)|.
\end{eqnarray*}
Letting $n\to\infty$ in the previous inequality, using that $\mu_n$ converges in distribution towards $\mu_\infty$ and the assumptions on $B_n(\alpha)$, it also holds that
\[
\int_\R \phi(x) d\mu_n(x) \le \frac{\epsilon \int_\R |y| \rho(y) dy}{\alpha}.
\]
Let us recall that the convergence in distribution is a topology which can be associated with the so-called \textit{Fortet-Mourier} metric. This metric is defined as follows:
\[
d_{FM}(\mu,\nu) =\sup_{\substack{\|\phi\|_\infty\le 1\\\|\phi'\|_\infty\le 1}} \left|\int_\R \phi(x) d\mu(x)-\int_\R \psi(x) d\nu (x)\right|.
\]
Besides, if $\|\phi\|_\infty\le 1$ one deduces that $\|\phi\ast\rho_\epsilon\|_\infty\le 1$ and $\|\phi\ast\rho_\epsilon'\|_\infty\le \frac{\int_\R|\rho'(t)|dt}{\epsilon}.$
Gathering all these facts leads  to
\[
\begin{array}{ll}
\displaystyle{\left|\int_\R \phi(x) (d\mu_n(x)-d\mu_\infty(x)) \right| }& \displaystyle{\le 2 \frac{\epsilon}{\alpha} \int_\R |y|\rho(y) dy+|B_n(\alpha)|} \\
\\
& \displaystyle{+\left(1+\frac{\int_\R|\rho'(t)|dt}{\epsilon}\right)d_{FM}\left(\mu_n,\mu_\infty\right).}
\end{array}
\]
Since by assumption $\mu_n$ tends to $\mu_\infty$ in distribution and since $d_{FM}$ is a metric for this convergence, $d_{FM}\left(\mu_n,\mu_\infty\right)$ goes to zero as $n$ goes to infinity. Taking the $\limsup$ for $n$ going to infinity in the previous inequality gives 
\[
\limsup_{n \to +\infty} d_{TV}\left(\mu_n,\mu_\infty\right)\le \limsup_{n} |B_n(\alpha)|+2 \frac{\epsilon}{\alpha} \int_\R |y|\rho(y) dy
\]
Then letting  $\epsilon$ go to zero and next $\alpha$ go to zero, we get the desired conclusion.
\end{proof}

\subsection{Proof of Lemma \ref{lem.compar}}
\label{sec.lemmaKol}
Finally, let us give the proof of Lemma \ref{lem.compar} allowing to compare the Kolomogorov and $\mathcal C^3$ distances, which was used in the proof of Lemma \ref{lem.borne} on the finiteness of the logarithmic moments of $f_n(X)$.

\begin{proof}[Proof of Lemma \ref{lem.compar}]
Let us fix some small $h>0$ and introduce the smooth approximations of $\mathds{1}_{\cdot \leq x}$
\[
\rho_x^{h-}(y) := \left \lbrace \begin{array}{ll}  1 \;\; \mathrm{if} \;\; y \leq x-h, \quad 0 \;\; \mathrm{if} \;\; y \geq x \\
\\
1-\frac{1}{1+e^{-\frac{h}{(y-x)(y-x+h)}}} \;\; \mathrm{if} \;\; x -h \leq y \leq x,
\end{array} \right.
\]
and 
\[
\rho_x^{h+}(y) := \left \lbrace \begin{array}{ll}  1 \;\; \mathrm{if} \;\; y \leq x, \quad 0 \;\;  \mathrm{if} \;\; y \geq x+h \\
\\
1-\frac{1}{1+e^{\frac{h}{(y-x)(y-x-h)}}} \;\; \mathrm{if} \;\; x  \leq y \leq x+h. 
\end{array} \right.
\]
We have then $\rho_x^{h-}(y)  \leq \mathds{1}_{y \leq x} \leq \rho_x^{h+}(y)$ for all $y \in \mathbb R$, and there exists a constant $C$ such that 
\[
\max( ||\rho_x^{h\pm}||_{\infty}, ||{\rho_x^{h\pm}}'||_{\infty}, ||{\rho_x^{h\pm}}''||_{\infty}, ||{\rho_x^{h\pm}}'''||_{\infty}) \leq \frac{C}{h^3}.
\]
We can then write
\[
\begin{array}{lll}
\mathbb P_X( G \leq x) - \mathbb P_X( X_n \leq x)   &= & \mathbb E_X \left[ \mathds{1}_{G \leq x} - \rho_x^{h-}(G)\right] +\mathbb E_X \left[ \rho_x^{h-}(G) -\rho_x^{h-}(Y_n) \right]\\
\\ & & +\underbrace{\mathbb E_X\left[\rho_x^{h-}(Y_n)] -  \mathds{1}_{ Y_n \leq x} \right]}_{ \cdot \leq 0}\\
\\
& \leq & \mathbb P_X( G \in [x-h,x]) + \frac{C}{h^3} \times d_{C^3}^{X}(Y_n,G). 
\end{array}
\]
Hence, since the Gaussian density is uniformly bounded by $1/\sqrt{2\pi} \leq 1$, uniformly in $x$, we get 
\[ 
\mathbb P_X( G \leq x) - \mathbb P_X( X_n \leq x) \leq h + \frac{C}{h^3} \times d_{C^3}^{X}(Y_n,G).
\]
In the same manner, we have 
\[
\begin{array}{ll}
\mathbb P_X( X_n \leq x) - \mathbb P_X( G \leq x)   = & \underbrace{\mathbb E_X \left[ \mathds{1}_{X_n \leq x} - \rho_x^{h+}(X_n)\right]}_{ \cdot \leq 0} +\mathbb E_X \left[ \rho_x^{h+}(X_n) -\rho_x^{h+}(G) \right]\\
\\ & +\mathbb E_X\left[\rho_x^{h+}(G)] -  \mathds{1}_{ G \leq x} \right]\\
\\
& \leq \mathbb P( G \in [x,x+h]) + \frac{C}{h^3} \times d_{C^3}^{X}(Y_n,G). 
\end{array}
\]
Therefore, we have 
\[
\sup_{x \in \mathbb R}\left| \mathbb P_X( G \leq x) - \mathbb P_X( Y_n \leq x) \right| \leq h + \frac{C}{h^3} \times d_{C^3}^{X}(Y_n,G),
\]
which yields the desired result after optimizing in $h$.

\end{proof}

\noindent
{\bf Acknowledgments}: the authors would like to thank Igor Wigman for bringing to their attention the papers \cite{bourgain,buckley} which motivated this research.

\bibliographystyle{alpha}
\newcommand{\etalchar}[1]{$^{#1}$}

\end{document}